\theoremstyle{plain}
   \newtheorem{theorem}{Theorem}[section]
   \newtheorem{proposition}[theorem]{Proposition}
   \newtheorem{lemma}[theorem]{Lemma}
   \newtheorem{corollary}[theorem]{Corollary}
   \newtheorem{conjecture}[theorem]{Conjecture}
\theoremstyle{definition}
   \newtheorem{definition}[theorem]{Definition}
   \newtheorem{example}[theorem]{Example}
   \newtheorem{remark}[theorem]{Remark}
   \newtheorem{introthm}{Theorem}
\numberwithin{equation}{section}
\newcommand{\CC}{{\mathbb {C}}}
\newcommand{\RR}{{\mathbb {R}}}
\newcommand{\ZZ}{{\mathbb {Z}}}
\newcommand{\NN}{{\mathbb {N}}}
\newcommand{\g}{\mathfrak{g}}
\newcommand{\hatg}{\hat{\mathfrak{g}}}
\newcommand{\Uqghat}{\mathcal{U}_q(\hat{\mathfrak{g}})}
\newcommand{\Uqghatdual}{\mathcal{U}_q(\leftindex^L {\hat{\mathfrak{g}}})}
\newcommand{\Uqghatsigma}{\mathcal{U}_q(\hat{\mathfrak{g}}^{\sigma})}
\newcommand{\Uqhaffine}{\mathcal{U}_q\tilde{\mathfrak{h}}}
\newcommand{\bpsi}{\mathbf{\Psi}}
\newcommand{\ringYa}{\mathbb{Z}[Y_{i,a}^{\pm 1}]_{i \in I,a \in \mathbb{C}^*}}
\DeclareMathOperator*{\im}{Im}
\DeclareMathOperator{\Hom}{Hom}
\DeclareMathOperator{\sgn}{sgn}
\title{Langlands branching rule for type B snake modules}
\author{Jingmin Guo \and Jian-Rong Li \and Keyu Wang}
\address{Faculty of Mathematics, University of Vienna, Oskar Morgenstern Platz 1, 1090 Vienna, Austria}
\email{jingmin.guo@univie.ac.at, jianrong.li@univie.ac.at, keyu.wang@univie.ac.at}
\begin{document}

\begin{abstract}
We prove that each snake module of the quantum Kac--Moody algebra of type $B_n^{(1)}$ admits a Langlands dual representation, as conjectured by Frenkel and Hernandez (\textit{Lett. Math. Phys.} (2011) 96:217-261). Furthermore, we establish an explicit formula, called the Langlands branching rule, which gives the multiplicities in the decomposition of the character of a snake module of the quantum Kac--Moody algebra of type $B_n^{(1)}$ into a sum of characters of irreducible representations of its Langlands dual algebra.
\end{abstract}

\maketitle
\tableofcontents

\section{Introduction}
\subsection{Background}
In the Langlands correspondence, two Lie algebras are called dual to each other if they are defined by dual Cartan data. In the case of simple Lie algebras, this means that their associated Cartan matrices are transposes of one another.

For example, in the case of finite-dimensional simple Lie algebras, the Langlands dual of $\mathfrak{so}_{2n+1}$ (type $B_n$) is $\mathfrak{sp}_{2n}$ (type $C_n$).

Our goal is to establish a relationship between the representation theories of Langlands dual algebras. Let us describe the problem, starting with the finite-dimensional case.

Let $P$ and $\leftindex^L{P}$ be the weight lattices of $\mathfrak{so}_{2n+1}$ and $\mathfrak{sp}_{2n}$ respectively. Consider the linear map $\Pi_{\mathbb{Q}} : P \otimes_{\mathbb{Z}} \mathbb{Q} \to \leftindex^L{P} \otimes_{\mathbb{Z}} \mathbb{Q}$ which sends the $i$-th fundamental weight $\omega_i \in P$ to the $i$-th fundamental weight $\check{\omega}_i \in \leftindex^L{P}$ if $i \neq n$, and sends $\omega_n$ to $\frac{1}{2}\check{\omega}_n$. The map $\Pi_{\mathbb{Q}}$ is equivariant under the Weyl group action.

Let $\chi$ (resp. $\leftindex^L{\chi}$) be the character map of $\mathfrak{so}_{2n+1}$ (resp. $\mathfrak{sp}_{2n}$) which takes values in $\mathbb{Z}[P]$ (resp. $\mathbb{Z}[\leftindex^L{P}]$). Then given any representation $V$ of $\mathfrak{so}_{2n+1}$, the sum of those terms in $\Pi_{\mathbb{Q}}(\chi(V))$ that lie in $\mathbb{Z}[\leftindex^L{P}]$ is $W$-invariant (we denote this partial sum by $\Pi(\chi(V))$), and hence lies in the image $\im(\leftindex^L{\chi})$. This suggests that the sum $\Pi(\chi(V))$ can be expressed as the character of a virtual representation of $\mathfrak{sp}_{2n}$, or equivalently,
\begin{equation}\label{introeq1}
    \Pi(\chi(V)) = \sum_W m_{V,W} \leftindex^L{\chi}(W),
\end{equation}
where the sum is over a finite set of irreducible representations $W$ of $\mathfrak{sp}_{2n}$, and $m_{V,W} \in \mathbb{Z}$.

In the work of Frenkel and Hernandez \cite{frenkel2011langlandsreps}, it was conjectured that the coefficients $m_{V,W}$ are all non-negative. Their approach is based on quantum groups. They verified their conjecture in some cases. This conjecture was solved by McGerty \cite{mcgerty2010langlands}, where the coefficients $m_{V,W}$ are computed explicitly. Such a formula as in \eqref{introeq1} is called the Langlands branching rule of finite type.

We remark that although the problem concerns only classical Lie algebras, the proof in \cite{mcgerty2010langlands} is also based on quantum groups.

\subsection{Frenkel--Hernandez conjecture}
We are interested in the quantum affine analogue of this problem. Namely, consider a \textit{finite-dimensional} representation $V$ of the quantum affine algebra $\mathcal{U}_q(\widehat{\mathfrak{so}}_{2n+1})$ (type $B_n^{(1)}$) and its character $\chi(V)$ which lies in $\mathbb{Z}[P]$, where $P$ is the weight lattice of $\mathfrak{so}_{2n+1}$. Here $\chi$ is known as the $\mathfrak{g}$-characters of finite-dimensional representations of quantum affine algebras $\Uqghat$.

It is natural to ask whether $\Pi(\chi(V))$ can also be decomposed into a \textit{positive} sum of characters of representations of the dual algebra, and if so, how to compute the coefficients. Note that we are considering finite-dimensional representations, rather than highest weight representations which are infinite-dimensional for affine types.

The Langlands dual of an affine algebra $\hatg$ is denoted by $\leftindex^L{\hatg}$. Recall that when $\hatg = \widehat{\mathfrak{so}}_{2n+1}$, its Langlands dual is the twisted affine Lie algebra $\leftindex^L{\hatg} = {\widehat{\mathfrak{sl}}_{2n}}^{\sigma}$ (type $A_{2n-1}^{(2)}$), see \cite[Section~4]{kac1990infinite}. The character for twisted types is denoted by $\chi^{\sigma}$, to distinguish it from non-twisted types. Note that when $\leftindex^L{\hatg} = {\widehat{\mathfrak{sl}}_{2n}}^{\sigma}$, its character $\chi^{\sigma}$ takes values in $\mathbb{Z}[\leftindex^L{P}]$, where $\leftindex^L{P}$ is the weight lattice of type $C_n$.

Two conjectures were proposed by Frenkel and Hernandez in \cite{frenkel2011langlandsfinite} concerning the Langlands duality in affine types.

The first conjecture (\cite[Conjecture~2.2]{frenkel2011langlandsfinite}) states that given an irreducible representation $V$ of $\Uqghat$, there exists an irreducible representation of $\Uqghatdual$, denoted by ${}^L{V}$ and called a Langlands dual representation of $V$, which satisfies
\begin{itemize}
    \item Let $\lambda \in P$ be the highest weight of $V$. Then ${}^L{V}$ has highest weight $\Pi(\lambda)$.
    \item $\Pi(\chi(V)) - \chi^{\sigma}({}^L{V}) \in \mathbb{N}[\leftindex^L{P}]$.
\end{itemize}
This conjecture was verified when $V$ is a Kirillov-Reshetikhin (KR) module of $\Uqghat$, and in this case, ${}^L{V}$ is a KR module of $\Uqghatdual$ \cite[Theorem~3.11]{frenkel2011langlandsfinite}.

The second conjecture (\cite[Conjecture~2.4]{frenkel2011langlandsfinite}) is much stronger, asserting that for any irreducible representation $V$, there is a decomposition as in \eqref{introeq1} with all coefficients non-negative. In particular, provided the first conjecture is true, the Langlands dual representation ${}^L{V}$ will appear as a term in \eqref{introeq1} with coefficient $m_{V,{}^L{V}} =1$. However, the positivity conjecture remains widely open; for example, it is unknown even for KR modules.

\subsection{Main results}
This paper addresses the positivity problem and the branching rule for the Langlands dual pair of types $B_n^{(1)}$ and $A_{2n-1}^{(2)}$. We establish the Langlands branching rule from type $B_n^{(1)}$ to type $A_{2n-1}^{(2)}$ for a large class of representations known as snake modules, which include all minimal affinizations \cite{chari1995minimalnonsimp,chari1996minimalirr,chari1996minimalsimp}, and thus include KR modules as a special case. Snake modules are also well known in the representation theory of $p$-adic groups. Via the quantum affine Schur--Weyl duality \cite{chari1996quantum}, they correspond to ladders in Zelevinsky's classification \cite{zelevinsky1980induced,lapid2014on}. Moreover, snake modules play an important role in the categorification of cluster algebras through representations of quantum Kac--Moody algebras \cite{duan2019cluster}.

We first verify that the first conjecture of Frenkel--Hernandez holds for snake modules.
\begin{introthm}[= Theorem~\ref{thm: langlands dual rep A to B}]\label{introthm1}
    Let $V$ be a snake module of type $B_n^{(1)}$. Then $V$ has a Langlands dual representation ${}^L{V}$, which is a snake module of type $A_{2n-1}^{(2)}$.
\end{introthm}
This provides additional evidence for \cite[Conjecture~2.2]{frenkel2011langlandsfinite}, extending from KR modules to snake modules.

Next, we treat the positivity conjecture of Frenkel--Hernandez. More precisely, we provide an explicit formula for the branching rule of snake modules of $\Uqghat$.

\begin{introthm}[= Theorem~\ref{thm: langlands branching rule}]\label{introthm2}
    Let $V$ be a snake module of type $B_n^{(1)}$. Then we have
    \begin{equation}\label{introeq2}
        \Pi(\chi(V)) = \sum_W \chi^{\sigma}(W),
    \end{equation}
    where the sum is over snake modules $W$ of type $A_{2n-1}^{(2)}$ satisfying a betweenness condition, and all coefficients are $1$. In particular, the Langlands dual representation ${}^L{V}$ constructed in Theorem~\ref{introthm1} appears in the decomposition.
\end{introthm}
This result is an affine analogue of the Langlands branching rule established by McGerty \cite{mcgerty2010langlands}, and we therefore also refer to our formula as the Langlands branching rule.

Moreover, we will see that the condition for $W$ to appear in \eqref{introeq2} is given by a betweenness condition, which takes a form analogous to the well-known betweenness condition in the branching rule when restricting $\mathfrak{gl}_n$ modules to $\mathfrak{gl}_{n-1}$. This condition can be briefly described as follows: snake modules are labeled by two strictly increasing sequences of integers $(l_1,\dots,l_T)$ and $(r_1,\dots,r_T)$ satisfying $r_t - l_t \ge 0$ for all $1 \le t \le T$. If $V$ is such a snake module, then the representations $W$ appearing in the decomposition are precisely those snake modules labeled by two sequences $(l'_1, \dots, l'_T)$ and $(r'_1,\dots,r'_T)$ such that for all $1 \le t \le T$, $l'_t$ are fixed by $l'_t = l_t$ and $r'_t$ are between two integers $r_{t-1} < r'_t \le r_t$, where we set $r_0 = -\infty$.

Let us remark that in contrast to the restriction branching rule, which naturally appears when restricting a module to a subalgebra, the Langlands branching rule is proved only at the level of characters, and an algebraic explanation for the appearance of such a branching rule remains to be discovered.

The approach taken in this paper differs from that in \cite{frenkel2011langlandsfinite}. Our method consists of three parts:

\begin{enumerate}
    \item We prove that the characters of snake modules of non-twisted type $A_{2n-1}^{(1)}$ and those of twisted type $A_{2n-1}^{(2)}$ are related by a folding map. The folding process is proposed as a conjecture in \cite{hernandez2006kirillov} and proved for KR modules therein. We first show that the folding process applies to snake modules (Proposition~\ref{prop: folding snake formula}).

    \item There is a combinatorial description of $q$-characters $\chi_q(V)$ of snake modules of types $B_n^{(1)}$ and $A_n^{(1)}$ developed by Mukhin and Young, known as the path description \cite{mukhin2012path}. The path description allows us to relate characters of type $B_n^{(1)}$ and non-twisted type $A_{2n-1}^{(1)}$ (Proposition~\ref{prop: bijection type B and pair of type A}, Theorem~\ref{thm: bijection PB and S}).

    \item Combining these two relations, the proof of Theorem~\ref{introthm2} is reduced to establishing an identity of characters of snake modules of type $A_{n-1}^{(1)}$ (Theorem~\ref{thm: type A equality}). We prove this identity in Section~\ref{sec:equality}, using the determinant formula \cite{bittmann2023on, brito2024alternating, chenevier2008characters, lapid2014on, lapid2018geometric, tadic1995on}. 
\end{enumerate}

We remark that an isomorphism between the Grothendieck rings of certain categories of representations of quantum Kac--Moody algebras of Langlands dual types was established in \cite{kashiwara2019categorical}. However, the duality constructed in this paper differs from the one in loc. cit.

\subsection{Organization}
This paper is organized as follows. In Section~\ref{sec:preliminaries}, we review the necessary background, including quantum Kac--Moody algebras and their representation theory, the folding process, and the Langlands duality. In Section~\ref{sec:path description}, we recall the Mukhin--Young path description of snake modules of types $A_{2n-1}^{(1)}$ and $B_n^{(1)}$. In Section~\ref{sec:Langlands B to A}, we define a map that associates a path of type $A_{2n-1}$ to a path of type $B_n$. As a consequence, we prove Theorem~\ref{introthm1}. Section~\ref{sec:equality} is a preparation for Section~\ref{sec:Langlands branching rule}. In Section~\ref{sec:equality}, we prove an identity between the characters of snake modules of type $A_{n-1}^{(1)}$ (Theorem~\ref{thm: type A equality}). In Section~\ref{sec:Langlands branching rule}, we use this result to establish the Langlands branching rule for snake modules. Finally, in Section~\ref{sec:A to B}, we discuss the duality in the opposite direction, from type $A_{2n-1}^{(2)}$ to $B_n^{(1)}$.

\vspace{5mm}

\paragraph{\textbf{Acknowledgments}}
We are grateful to Matheus Brito and Vyjayanthi Chari for valuable discussions.
This work was partially supported by the Austrian Science Fund (FWF): P-34602, Grant DOI: 10.55776/P34602, and PAT 9039323, Grant DOI 10.55776/PAT9039323.

\section{Preliminaries}\label{sec:preliminaries}
In Section~\ref{subsec: prem QAA}, we recall the definition of quantum Kac--Moody algebras, which include both non-twisted and twisted quantum affine algebras. In Section~\ref{subsec: prem non twisted} and Section~\ref{subsec: prem twisted type}, we review the representation theory of non-twisted quantum affine algebras and twisted quantum affine algebras, respectively. In Section~\ref{subsec: folding}, we recall known results that establish the relations between twisted and non-twisted types via the folding process. Finally, in Section~\ref{subsec: prem Langlands}, we recall the definition of Langlands dual algebras and the conjectures of Frenkel and Hernandez on Langlands dual representations.

\subsection{Quantum Kac--Moody algebras}\label{subsec: prem QAA}

For $N \in \NN^*$, an $N \times N$ matrix $C$ is called a generalized Cartan matrix if $C_{ij} \in \mathbb{Z}$ ($\forall i,j$), $C_{ii}=2$ ($\forall i$), $C_{ij} \leq 0$ ($\forall i \neq j$) and $C_{ij}=0$ if and only if $C_{ji}=0$. We suppose that $C$ is symmetrizable, that is, there exists a diagonal matrix $D = \mathrm{diag}(d_1,\dots,d_N)$ such that the matrix $DC$ is symmetric. A generalized Cartan matrix $C$ is said to be of affine type if its determinant is $0$ and all of its principal minors are positive. In the affine type, the positive integers $d_i$ are unique up to a scalar. We fix them so that $\mathrm{min}(d_i) = 1$. See \cite[Section~4.5]{kac1990infinite} for the classification of affine Cartan matrices. 

Let $q \in \mathbb{C}^*$ be such that $q$ is not a root of unity. Denote by $q_i = q^{d_i}$.

\begin{definition}
    Let $C$ be a generalized Cartan matrix of affine type. The quantum Kac--Moody algebra $\mathcal{U}_q(\mathfrak{g}(C))$ is the associative $\mathbb{C}$-algebra with generators $k_{i}^{\pm 1} , e_i^{\pm} \; (1 \leq i \leq N)$, and relations
	\begin{equation*}
		\begin{split}
			&k_i k_i^{-1} = k_i^{-1}k_i = 1, \; k_ik_j = k_jk_i,\\
			&k_ie_j^{\pm} = q_i^{\pm C_{i,j}}e_j^{\pm}k_i,\\
			&[e_i^{+},e_j^{-}] = \delta_{i,j} \frac{k_i-k_i^{-1}}{q_i - q_i^{-1}}, \forall i,j,\\
			&\sum_{r=0}^{1-C_{i,j}} (-1)^r(e_i^{\pm})^{(1-C_{i,j}-r)} e_j^{\pm} (e_i^{\pm})^{(r)} = 0 \; \text{for} \; i \neq j,
		\end{split}
	\end{equation*}
	where $(e_i^{\pm})^{(r)} = \frac{(e_i^{\pm})^r}{[r]_{q_i}!}$, $[r]_{q_i}! = [r]_{q_i} [r-1]_{q_i} \cdots [1]_{q_i}$, and $ [m]_{q_i}$ is the $q$-number $[m]_{q_i} = \frac{q_i^m-q_i^{-m}}{q_i-q_i^{-1}}$. We use the convention that $[0]_q = 0$, $[0]_q! = 1$.
\end{definition}

In this paper, we are interested in the affine types $A_{2n-1}^{(1)}$, $B_{n}^{(1)}$ and $A_{2n-1}^{(2)}$, $n \geq 2$. As in \cite[Section~4.5]{kac1990infinite}, we use the set of indices as follows:

In type $A_{2n-1}^{(1)}$, $N = 2n$. The set of indices is $\hat{I} = I \sqcup \{0\}$, consisting of the index set $I = \{1,2,\dots,2n-1\}$ of the Lie algebra $\mathfrak{sl}_{2n}$ and an extra index $0$. The generalized Cartan matrix of type $A_{2n-1}^{(1)}$ is 
\[
\left(
\scalebox{0.8}{$\begin{array}{cccccccc}
2 & -1 &        &        &        &        &        &     -1   \\
-1 & 2 & -1     &        &        &        &        &        \\
   & -1 & 2 & -1     &        &        &        &        \\
   &    & -1 & \ddots & \ddots &        &        &        \\
   &    &    & \ddots & \ddots & -1     &        &        \\
   &    &    &        & -1 & 2 & -1     &        \\
   &    &    &        &    & -1 & 2 & -1 \\
 -1  &    &    &        &    &    & -1 & 2
\end{array}$}
\right),
\]
where the order of labels of the matrix is $0,1,2,\dots,2n-1$.

In type $B_n^{(1)}$, $N = n+1$. The set of indices is $\hat{I} = I \sqcup \{0\}$, consisting of the index set $I = \{1,2,\dots,n\}$ of the Lie algebra $\mathfrak{so}_{2n+1}$ and an extra index $0$. Here $n$ is the index for the short root of $\mathfrak{so}_{2n+1}$. The generalized Cartan matrix of type $B_{n}^{(1)}$ is 
\[
\left(
\scalebox{0.8}{$\begin{array}{cccccccc}
2 & 0 &  -1      &        &        &        &        &        \\
0 & 2 & -1     &        &        &        &       &        \\
-1   & -1 & 2 & -1     &        &        &        &        \\
   &    & -1 & \ddots & \ddots &        &        &        \\
   &    &    & \ddots & \ddots & -1     &        &        \\
   &    &    &        & -1 & 2 & -1     &        \\
   &    &    &        &    & -1 & 2 & -1 \\
   &    &    &        &    &    & -2 & 2
\end{array}$}
\right),
\]
where the order of labels of the matrix is $0,1,2,\dots,n$.

In type $A_{2n-1}^{(2)}$, $N = n+1$. The set of indices is $\hat{I}^{\sigma} = I \sqcup \{0\}$, consisting of the index set $I = \{\bar{1},\dots,\bar{n}\}$ and an extra index $0$. Here we remark that the set $I$ arises from the orbits of the involution on the Dynkin diagram of type $A_{2n-1}$. We use the notation with a bar to emphasize that we are in the twisted case. The generalized Cartan matrix of type $A_{2n-1}^{(2)}$ is 
\[
\left(
\scalebox{0.8}{$\begin{array}{cccccccc}
2 & 0 &  -1      &        &        &        &        &        \\
0 & 2 & -1     &        &        &        &       &        \\
-1   & -1 & 2 & -1     &        &        &        &        \\
   &    & -1 & \ddots & \ddots &        &        &        \\
   &    &    & \ddots & \ddots & -1     &        &        \\
   &    &    &        & -1 & 2 & -1     &        \\
   &    &    &        &    & -1 & 2 & -2 \\
   &    &    &        &    &    & -1 & 2
\end{array}$}
\right),
\]
where the order of labels of the matrix is $0,\bar{1},\bar{2},\dots,\bar{n}$.

Quantum Kac--Moody algebras can be defined as affinizations of finite-type quantum groups via Drinfeld’s realization \cite{drinfeld1987new, chari1998twisted, damiani2015from}. We do not list the generators and relations here, as they will not be used in an essential way. We note, however, that there exists a large commutative subalgebra $\Uqhaffine$, generated by elements $\phi_{i,\pm k}^{\pm}$ ($i \in I, k \geq 0$) in Drinfeld's realization.

Denote by
    \[\phi_{i}^{\pm}(u) = \sum_{k=0}^{\infty} \phi_{i,\pm k}^{\pm} u^{\pm k}, \quad i \in I.\]

In accordance with common terminology, we will also call a quantum Kac--Moody algebra of non-twisted affine type as a non-twisted quantum affine algebra, and a quantum Kac--Moody algebra of twisted affine type as a twisted quantum affine algebra.


\subsection{Non-twisted type}\label{subsec: prem non twisted}
Let $\mathfrak{h}$ be the Cartan subalgebra of the finite-dimensional Lie algebra $\g$, 
and $\mathfrak{h}^*$ be its dual vector space.
Let $\omega_i \in \mathfrak{h}^*$ ($i \in I$) be the fundamental weights of $\g$, and $\alpha_i \in \mathfrak{h}^*$ ($i \in I$) be the simple roots of $\g$. $\mathfrak{h}^*$ is equipped with an inner product $\langle, \rangle$ such that $\langle \alpha_i,\omega_j \rangle = d_i \delta_{ij}$.

Let $\Uqghat$ be a quantum Kac--Moody algebra of non-twisted affine type. In this paper, it will be of type $A_{2n-1}^{(1)}$ or $B_n^{(1)}$. For a finite-dimensional representation $V$ of $\Uqghat$, an element $\lambda \in \mathfrak{h}^*$ is called a weight of $V$ if the space
\[V_{\lambda} := \{v \in V~|~k_i v = q^{\langle \alpha_i,\lambda \rangle}v, \forall i \in I \}\]
is non-zero. 

Denote by $P = \bigoplus_{i \in I} \ZZ \omega_i$ the weight lattice of $\g$.

The \textit{usual character} of a finite-dimensional representation $V$ of $\Uqghat$ is defined as
\[\chi(V) = \sum_{\lambda \in P} \dim(V_{\lambda})[{\lambda}].\]
This is an element in the abelian group $\ZZ[P]$ generated by formal symbols $[\lambda]$, $\lambda \in P$. We remark that the usual character is defined as a $\g$-character for the finite-dimensional simple Lie algebra $\g$, it should not be confused with the $\hatg$-character of the affine Kac--Moody algebra as considered in \cite{kac1990infinite,lusztig1993introduction}. However, since we consider only finite-dimensional representations, the $\g$-character contains the same information as the $\hatg$-character in this context.

Then we recall the $q$-characters of finite-dimensional representations of $\Uqghat$.

For any $\Uqghat$-module $V$, a set of complex numbers $\bpsi = (\psi_{i,\pm k}^{\pm})_{i \in I, k \in \NN}$ is called an $l$-weight of $V$ if the space 
$\{v \in V~|~\phi_{i,\pm k}^{\pm}.v = \psi_{i,\pm k}^{\pm}v \}$ is non-zero. It was proved in \cite{frenkel1999q} that any $l$-weight of a finite-dimensional representation $V$ of $\Uqghat$ is of the form
\[\sum_{k \geq 0} \psi^{\pm}_{i, \pm k} u^{\pm k} = q^{\deg(P_i) - \deg(R_i)}\frac{P_i(uq_i^{-1})R_i(uq_i)}{P_i(uq_i)R_i(uq_i^{-1})} \]
expanded in $\CC \llbracket u \rrbracket$ (resp. in $\CC \llbracket u^{-1} \rrbracket$), for some polynomials $P_i$ and $R_i$ satisfying $P_i(0) = R_i(0) = 1$.

The $q$-characters of finite-dimensional representations of $\Uqghat$ were defined by Frenkel and Reshetikhin (and their Yangian analogues were defined by Knight) \cite{frenkel1999q,knight1995spectra}. Let $V$ be a finite-dimensional representation of $\Uqghat$,

\[\chi_q(V) = \sum_{\bpsi} \dim(V_{\bpsi}) m_{\bpsi} \in \ringYa,\]
where $m_{\bpsi} = \prod_{i\in I,a\in \CC^*} Y_{i,a}^{p_{i,a}-r_{i,a}}$, $P_i(u) = \prod_{a \in \CC^*} (1-au)^{p_{i,a}}$, $R_i(u) = \prod_{a \in \CC^*} (1-au)^{r_{i,a}}$,
and
	\[V_{\bpsi} := \{v \in V~|~\exists p \geq 0, \forall i \in I,k \geq 0, (\phi_{i,\pm k}^{\pm} - \psi_{i,\pm k}^{\pm})^p.v = 0 \}.\]
    
The irreducible finite-dimensional representations of $\Uqghat$ are classified by $l$-highest weight representations \cite{chari1991quantum,chari1995quantum}. An $l$-highest weight representation of $\Uqghat$ is a $\Uqghat$-module $V$ generated by a vector $v_0 \in V$ such that 
	\[x_{i,l}^+.v_0 = 0, \; \phi_{i,\pm k}^{\pm}.v_0 = \psi_{i,\pm k}^{\pm}v_0, \quad \forall i \in I, l \in \ZZ, k \in \NN, \]
for some complex numbers $\psi_{i,\pm k}^{\pm} \in \CC$ ($i \in I, k \in \NN$). We call such $\bpsi = (\psi_{i,\pm k}^{\pm})_{i \in I, k \in \NN}$ and its corresponding monomial $m_{\bpsi} \in \ringYa$ a highest $l$-weight of $V$.

A monomial $m = \prod_{i \in I,a \in \CC^*} Y_{i,a}^{u_{i,a}} \in \ringYa$ is called dominant if $u_{i,a} \geq 0$, $\forall i \in I, a \in \CC^*$.

\begin{theorem}[\cite{chari1991quantum,chari1995quantum}]
    For each dominant monomial $m = \prod_{i \in I,a \in \CC^*} Y_{i,a}^{u_{i,a}} \in \ringYa$, there is a unique irreducible finite-dimensional $l$-highest weight representations of $\Uqghat$ with highest $l$-weight $m$. We denote this irreducible representation by $L(m)$.

    Moreover, any irreducible finite-dimensional representation of $\Uqghat$ is of this form.
\end{theorem}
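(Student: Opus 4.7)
The plan is to mirror the classical Chari--Pressley strategy via Drinfeld's new realization, in three stages: construct a universal $l$-highest weight object, establish finite-dimensionality when the highest $l$-weight is dominant, and then show every irreducible finite-dimensional representation arises in this way.

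\textbf{Construction and uniqueness.} For any tuple $\bpsi = (\psi_{i,\pm m}^{\pm})_{i\in I,\, m\in\NN}$, I would first build a Verma-type module $M(\bpsi)$ as the quotient of $\Uqghat$ by the left ideal generated by the Drinfeld raising operators $x_{i,k}^+$ ($i\in I,k\in\ZZ$) together with the elements $\phi_{i,\pm m}^{\pm}-\psi_{i,\pm m}^{\pm}$ and $k_i - q^{\langle \alpha_i,\lambda\rangle}$, where $\lambda \in \mathfrak{h}^*$ is the weight forced by $\bpsi$. The triangular decomposition coming from the Drinfeld realization shows that the top graded piece of $M(\bpsi)$ is one-dimensional, so the sum of all proper submodules is still proper, and $M(\bpsi)$ has a unique maximal proper submodule, yielding an irreducible quotient $L(\bpsi)$. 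Two irreducible $l$-highest weight modules with the same highest $l$-weight are then isomorphic by the standard argument sending highest weight vector to highest weight vector, which gives uniqueness.

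\textbf{Finite-dimensionality for dominant $m$.} The hard part will be proving that $L(m)$ is finite-dimensional whenever $m$ is a dominant monomial. My plan is to reduce to rank one. For each $i\in I$, the elements $x_{i,k}^{\pm},\phi_{i,\pm r}^{\pm},k_i^{\pm 1}$ generate a subalgebra isomorphic to a quotient of $\mathcal{U}_{q_i}(\hat{\mathfrak{sl}}_2)$, and Chari--Pressley classify irreducible finite-dimensional $l$-highest weight modules for $\hat{\mathfrak{sl}}_2$ by Drinfeld polynomials, equivalently by single-variable dominant monomials. Restricting $L(m)$ to this subalgebra, one checks that the $i$-th Drinfeld polynomial extracted from $m$ is polynomial of positive degree, so the restriction is integrable; hence both raising and lowering operators act locally nilpotently on $L(m)$. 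Combined with the $P$-weight decomposition under the finite-type subalgebra $\mathcal{U}_q(\g)$, local nilpotency places $L(m)$ in the integrable category and forces finite-dimensionality. An alternative route is to build the fundamental modules $L(Y_{i,a})$ explicitly (via evaluation maps in type $A$, or via Kirillov--Reshetikhin limits in general), and then realize $L(m)$ as the irreducible head of the cyclic submodule of $\bigotimes L(Y_{i,a})^{\otimes u_{i,a}}$ generated by the tensor product of highest weight vectors, which is a quotient of a manifestly finite-dimensional module.

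\textbf{Every irreducible finite-dimensional module is some $L(m)$.} Given an irreducible finite-dimensional $V$, the commutative subalgebra $\Uqhaffine$ admits a simultaneous generalized eigenvector on the finite-dimensional space $V$. Choosing such an eigenvector whose $\mathfrak{h}$-weight is maximal in the dominance order on $P$ produces a vector annihilated by all $x_{i,k}^+$, so $V$ is a quotient of $L(\bpsi)$ for its eigenvalue tuple $\bpsi$, and irreducibility gives $V \cong L(\bpsi)$. Restricting once more to each $\mathcal{U}_{q_i}(\hat{\mathfrak{sl}}_2)_i$ and invoking the rank-one classification shows that $\bpsi$ is encoded by a dominant monomial $m \in \ringYa$, so $V \cong L(m)$ and the classification is complete.
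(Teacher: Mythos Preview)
The paper does not give its own proof of this theorem; it is stated in the preliminaries as a citation to Chari--Pressley \cite{chari1991quantum,chari1995quantum}, with no argument supplied. Your proposal is a reasonable outline of the original Chari--Pressley strategy, so in that sense it agrees with what the paper relies on.

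Two technical points on your sketch. First, in the finite-dimensionality step, integrability in the sense of local nilpotence of each $x_{i,k}^{\pm}$ does not by itself force finite dimensionality, because there are infinitely many Drinfeld generators indexed by $k\in\ZZ$; the argument that actually closes the gap is precisely your ``alternative route'' realizing $L(m)$ as a subquotient of a tensor product of fundamental modules, and in the literature this is the primary argument rather than an alternative (with the existence of the fundamental modules themselves requiring separate work outside type $A$). Second, in your last paragraph you write that $V$ is a quotient of $L(\bpsi)$; you mean $M(\bpsi)$, after which irreducibility of $V$ identifies it with $L(\bpsi)$.
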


The usual character $\chi(V)$ can be easily recovered from the $q$-character $\chi_q(V)$ as follows. The map $\prod_{i,a} Y_{i,a}^{u_{i,a}} \mapsto [\sum_{i,a} u_{i,a} \omega_i]$ generates a homomorphism $\ZZ[Y_{i,a}^{\pm 1}]_{i \in I,a \in \CC^*} \to \ZZ[P]$. The image of $\chi_q(V)$ under this homomorphism is exactly the usual character $\chi(V)$ \cite[Theorem~3]{frenkel1999q}.

\subsection{Twisted type}\label{subsec: prem twisted type}
The character theory for quantum Kac--Moody algebras of twisted affine type is parallel to that of non-twisted type \cite{chari1998twisted,hernandez2010kirillov,wang2023qq}. Let $\Uqghatsigma$ be a quantum Kac--Moody algebra of twisted affine type. 

In this paper, $\Uqghatsigma$ will be of type $A_{2n-1}^{(2)}$. Recall that in this case, the index set $I = \{\bar{1},\dots,\bar{n}\}$. Let $\leftindex^L {\mathfrak{h}}^* = \bigoplus_{i \in I} \CC \check{\omega}_i$ and  $\leftindex^L {P} = \bigoplus_{i \in I} \ZZ \check{\omega}_i$. For now, the symbols $\check{\omega}_i$, $\leftindex^L {\mathfrak{h}}^*$ and $\leftindex^L {P}$ are simply notations used to distinguish the twisted types from the non-twisted types; the reason for this choice of notation will become clear in Section~\ref{subsec: prem Langlands}.

For a finite-dimensional representation $V$ of $\Uqghatsigma$, an element $\lambda \in \leftindex^L{\mathfrak{h}}^*$ is called a weight of $V$ if the space 
\[V_{\lambda} := \{v \in V~|~k_i v = q^{\langle \check{\alpha}_i,\lambda \rangle}v, \forall i \in I \}\]
is non-zero. Here $\langle, \rangle$ is the inner product on $\leftindex^L {\mathfrak{h}}^*$ such that $\langle \check{\alpha}_i,\check{\omega}_j \rangle = \check{d}_i \delta_{ij}$, where $\check{d}_i$ are entries of the diagonal matrix $D$ which symmetrize the generalized Cartan matrix $C$ of type $A_{2n-1}^{(2)}$.

The usual character of a finite-dimensional representation $V$ of $\Uqghatsigma$ is defined as
\[\chi^{\sigma}(V) = \sum_{\lambda \in \leftindex^L{P}} \dim(V_{\lambda})[{\lambda}],\]
which is an element in the abelian group $\ZZ[\leftindex^L{P}]$ generated by formal symbols $[\lambda]$, $\lambda \in \leftindex^L {P}$.

In Drinfeld realization of twisted quantum affine algebras of type $A_{2n-1}^{(2)}$, there are elements 
$$\phi_{i,\pm k}^{\pm},\quad i \in I, k \in \NN,$$ 
where $\phi_{\bar{n},\pm k}^{\pm} = 0$ if $k$ is odd.

Similarly, for any $\Uqghatsigma$-module $V$, a set of complex numbers $\bpsi = (\psi_{i,\pm k}^{\pm})_{i \in I, k \in \NN}$, is called an $l$-weight of $V$ if the space 
$\{v \in V~|~\phi_{i,\pm k}^{\pm}.v = \psi_{i,\pm k}^{\pm}v \}$ is non-zero, and any $l$-weight of a finite-dimensional representation $V$ of $\Uqghat$ is of the form
\[\sum_{k \geq 0} \psi^{\pm}_{i, \pm k} u^{\pm k} = q^{\deg(P_i) - \deg(R_i)}\frac{P_i(uq_i^{-1})R_i(uq_i)}{P_i(uq_i)R_i(uq_i^{-1})} \]
expanded in $\CC \llbracket u \rrbracket$ (resp. in $\CC \llbracket u^{-1} \rrbracket$), for some polynomials $P_i$ and $R_i$ which satisfy that $P_i(0) = R_i(0) = 1$ and that $P_{\bar{n}}$ and $R_{\bar{n}}$ have only even degree terms.

As in \cite{wang2023qq}, we consider the commutative ring
\[\mathcal{Z} := \ZZ[Z_{i,a}^{\pm 1}]_{i \in I,a \in \CC^*}/(Z_{\bar{n},a}^{\pm 1} = Z_{\bar{n},-a}^{\pm 1})_{a \in \CC^*}.\]

The $q$-character of a finite-dimensional representation $V$ of $\Uqghatsigma$ is

\[\chi_q^{\sigma}(V) = \sum_{\bpsi} \dim(V_{\bpsi}) m_{\bpsi} \in \mathcal{Z},\]
where $m_{\bpsi} = \prod_{i\in I,a\in \CC^*} Z_{i,a}^{p_{i,a}-r_{i,a}}$, $P_i(u) = \prod_{a \in \CC^*} (1-au)^{p_{i,a}}$, $R_i(u) = \prod_{a \in \CC^*} (1-au)^{r_{i,a}}$ when $i \neq \bar{n}$, and $P_{\bar{n}}(u) = \prod_{a \in \CC^*} (1-a^2u^2)^{p_{\bar{n},a}}$, $R_{\bar{n}}(u) = \prod_{a \in \CC^*} (1-a^2u^2)^{r_{\bar{n},a}}$,
and the $l$-weight space
	\[V_{\bpsi} := \{v \in V~|~\exists p \geq 0, \forall i \in I,k \geq 0, (\phi_{i,\pm k}^{\pm} - \psi_{i,\pm k}^{\pm})^p.v = 0 \}.\]

Similar to non-twisted type, irreducible finite-dimensional representations of $\Uqghatsigma$ are classified by $l$-highest weight representations $L(m)$ with monomial $m = \prod_{i \in I,a \in \CC^*} Z_{i,a}^{u_{i,a}} \in \mathcal{Z}$ such that $u_{i,a} \geq 0$, $\forall i \in I, a \in \CC^*$, which are defined in the same way as in non-twisted type.

As in the non-twisted case, the usual character of a finite-dimensional representation $V$ of $\Uqghatsigma$ can be recovered from its $q$-character as follows.

The map $\prod_{i,a} Z_{i,a}^{u_{i,a}} \mapsto [\sum_{i,a} u_{i,a}\check{\omega}_i]$ generates a homomorphism $\mathcal{Z} \to \ZZ[\leftindex^L{P}]$. The image of the $q$-character $\chi_q^{\sigma}(V)$ under this homomorphism coincides with the usual character $\chi^{\sigma}(V)$.

\subsection{Folding characters}\label{subsec: folding}
Let $\g$ be a finite-dimensional simply-laced simple Lie algebra. In this section we assume that $\g$ is of type $A_{2n-1}$. Let $\Uqghat$ be its associated non-twisted quantum affine algebra of type $A_{2n-1}^{(1)}$ and $\Uqghatsigma$ be the twisted quantum affine algebra of type $A_{2n-1}^{(2)}$. Even though no relation is known between the algebra structures of $\Uqghat$ and $\Uqghatsigma$, the character theories of their representations are closely related \cite{hernandez2010kirillov,wang2023qq}.

It was proved \cite[Theorem~4.15]{hernandez2010kirillov} that the ring homomorphism
\begin{equation}\label{eqn: folding pi}
    \begin{split}
    \pi: \ZZ[Y^{\pm 1}_{i,a}]_{1 \le i \le 2n-1,a\in \CC^*} \to \mathcal{Z},\\
    \begin{cases}
    Y_{i,a}^{\pm 1} \mapsto Z_{\bar{i},a}^{\pm 1},&~\text{if}~i \le n,\\
    Y_{i,a}^{\pm 1} \mapsto Z_{\overline{2n-i},-a}^{\pm 1},&~\text{if}~i > n,
    \end{cases}
\end{split}
\end{equation}
induces a ring homomorphism between the Grothendieck ring of finite-dimensional representations
\begin{equation}\label{eqn:bar pi of Grothendieck rings}
\bar{\pi}: \mathrm{Rep}(\Uqghat) \to \mathrm{Rep}(\Uqghatsigma).
\end{equation}
Here $\mathrm{Rep}(\Uqghat)$ (resp. $\mathrm{Rep}(\Uqghatsigma)$) is the Grothendieck ring of the category of finite-dimensional representations of $\Uqghat$ (resp. of $\Uqghatsigma$).

If we restrict to the subcategory of representations whose $l$-weights are monomials in $\ZZ[Y_{i,q^n}^{\pm 1}]_{i\in I,n \in\ZZ}$, then it is conjectured that $\bar{\pi}$ maps the classes  of irreducible representations to classes of irreducible representations.

\begin{conjecture}[{\cite[Section~4.4]{hernandez2010kirillov},\cite[Conjecture~2.20]{wang2023qq}}]\label{conj: folding}
    Let $m$ be a dominant monomial in $\ZZ[Y_{i,q^n}^{\pm}]_{i\in I,n \in\ZZ}$, let $\pi(m)$ be the corresponding dominant monomial in $\mathcal{Z}$ under the map $\pi$. Then 
    \[\bar{\pi} ([L(m)]) = [L(\pi(m))].\]
\end{conjecture}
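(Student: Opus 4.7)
The plan is to reduce Conjecture~\ref{conj: folding} to a $q$-character identity. Since $\bar{\pi}$ is a ring homomorphism of Grothendieck rings, the class $\bar{\pi}([L(m)])$ expands as a non-negative integer combination of classes of simple modules $[L(m')]$; moreover, simple finite-dimensional representations of $\Uqghatsigma$ are determined up to isomorphism by their highest $l$-weight, which can in turn be read off from the $q$-character. Hence it suffices to establish
$$\pi(\chi_q(L(m))) = \chi_q^{\sigma}(L(\pi(m))) \quad \text{in } \mathcal{Z},$$
together with the auxiliary fact that $\pi(m)$ is the unique dominant leading monomial of the right-hand side under the twisted Nakajima partial order. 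Note that $\pi$ is manifestly a monoid homomorphism on dominant monomials, so $\pi(m)$ is dominant whenever $m$ is.

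First I would dispatch the case of fundamental modules. Each $L(Y_{i,a})$ of $\Uqghat$ in type $A_{2n-1}^{(1)}$ has a $q$-character given by a well-known path (or tableau) sum. A direct computation, splitting cases according to $i \leq n$ versus $i > n$ to track the spectral-parameter flip $a \mapsto -a$ built into $\pi$, matches $\pi(\chi_q(L(Y_{i,a})))$ with $\chi_q^{\sigma}(L(Z_{\bar i, a}))$ or $\chi_q^{\sigma}(L(Z_{\overline{2n-i},-a}))$ respectively. Because the Grothendieck ring on the twisted side is generated by the classes of Kirillov-Reshetikhin and fundamental modules, this case already determines how $\bar{\pi}$ acts on a generating set.

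To pass from fundamentals to general $m$, the natural tool is the Frenkel-Mukhin algorithm. The key observation I would try to prove is that $\pi$ intertwines the non-twisted simple-root monomials $A_{i,a}^{\pm 1}$ with their twisted analogues $(A^{\sigma}_{\bar j,b})^{\pm 1}$, absorbing both the folding of indices and the sign flip of spectral parameters. Granting this intertwining, the FM recursion applied to the seed $m$ folds onto the FM recursion applied to the seed $\pi(m)$, and an induction along the Nakajima order produces the desired $q$-character identity. In the combinatorial setting of Section~\ref{sec:path description}, the inductive step translates into the concrete claim that the Mukhin-Young path expansion of $\chi_q(L(m))$, pushed through $\pi$ monomial by monomial, recovers the path expansion governing $\chi_q^{\sigma}(L(\pi(m)))$.

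The main obstacle is that the Frenkel-Mukhin algorithm is not known to compute the correct $q$-character outside Nakajima's class of special modules (which fortunately includes Kirillov-Reshetikhin and, more generally, snake modules). Consequently, the realistic target of this strategy is the snake-module case, which is precisely the family required for the Langlands branching rule in later sections of the paper; the conjecture in full generality appears to demand additional input, such as cluster-algebraic or geometric arguments, going beyond what the path description alone can provide.
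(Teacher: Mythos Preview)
This statement is a \emph{conjecture}; the paper does not prove it. Immediately after stating it, the paper records only the known partial result (the proposition attributed to \cite[Theorem~4.15]{hernandez2010kirillov}) that the identity holds when $L(m)$ is a snake module of type $A_{2n-1}$, with the one-line justification that snake modules are special --- their $q$-character contains a unique dominant monomial --- so the argument for Kirillov--Reshetikhin modules carries over verbatim.

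Your proposal is consistent with this. You correctly diagnose in your final paragraph that the Frenkel--Mukhin strategy only reaches special modules, hence snake modules, and that the general conjecture is beyond these methods. Your outline for the accessible case --- verify that $\pi$ intertwines the root monomials $A_{i,a}^{\pm1}$ with their twisted analogues, then run the FM recursion on both sides from the seeds $m$ and $\pi(m)$ --- is precisely the mechanism behind Hernandez's KR argument, and the paper simply cites that argument together with the specialness of snake modules. So your approach and the paper's treatment coincide in substance; you have just unpacked what the paper compresses into a citation.

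One small quibble: your opening assertion that $\bar\pi([L(m)])$ is automatically a \emph{non-negative} integer combination of simples does not follow merely from $\bar\pi$ being a ring homomorphism, and the paper does not claim it. You do not actually use this in the subsequent reduction to the $q$-character identity $\pi(\chi_q(L(m)))=\chi_q^\sigma(L(\pi(m)))$, so it can be dropped without harm.
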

This conjecture has been proven for Kirillov-Reshetikhin modules \cite[Theorem~4.15]{hernandez2010kirillov}. In Proposition~\ref{pro:snake modules conjecture}, We will show that it also holds for snake modules of type $A_{2n-1}$.

\subsection{Langlands duality}\label{subsec: prem Langlands}
We begin with an explanation of the symbols $\leftindex^L {\mathfrak{h}}^*$, $\leftindex^L {P}$, $\check{\omega}_i$ and $\check{\alpha}_i$ used in Section~\ref{subsec: prem twisted type}.

Let $\Uqghat$ be the quantum affine algebra of type $B_n^{(1)}$. Recall that the index set $I = \{1,\dots,n\}$. Recall that $\mathfrak{h}^* = \bigoplus_{i \in I} \CC \omega_i$, and $P = \bigoplus_{i \in I} \ZZ \omega_i$ is the weight lattice of the simple Lie algebra of type $B_n$. Let $R = \bigoplus_{i \in I}\ZZ \alpha_i \subset P$ be the root lattice of type $B_n$.

Consider the dual space $\leftindex^L {\mathfrak{h}}^* = \Hom(\mathfrak{h}^*,\CC)$ and the dual lattice to the root lattice $R$, denoted by $\leftindex^L{P} = \bigoplus_{i \in I} \ZZ \check{\omega}_{\bar{i}} \subset \leftindex^L {\mathfrak{h}}^*$ with dual basis $\check{\omega}_{\bar{i}} (\alpha_j) = \delta_{ij}$. Let $\leftindex^L {R} = \bigoplus_{i \in I} \ZZ \check{\alpha}_{\bar{i}} \subset \leftindex^L {P}$ be the dual lattice of the weight lattice $P$ with dual basis $\check{\alpha}_{\bar{i}} (\omega_j) = \delta_{ij}$.

Then the inner product $\langle,\rangle$ on $\mathfrak{h}^*$ induces an inner product $\langle,\rangle$ on $\leftindex^L {\mathfrak{h}}^*$. One calculates that $\langle \check{\alpha}_{\bar{i}},\check{\omega}_{\bar{j}} \rangle = \check{d}_{\bar{i}} \delta_{ij}$, where $\check{d}_{\bar{i}}$ are exactly the entries of the diagonal matrix $D$ which symmetrize generalized Cartan matrix of type $A_{2n-1}^{(2)}$.

This phenomenon arises from the fact that the generalized Cartan matrices of type $B_{n}^{(1)}$ and of type $A_{2n-1}^{(2)}$ are transposes of each other. Two quantum Kac--Moody algebras are said to be \textit{Langlands dual} if their associated generalized Cartan matrices are transposes of one another. In particular, the non-twisted quantum affine algebra $\Uqghat$ of type $B_n^{(1)}$ and the twisted quantum affine algebra of type $A_{2n-1}^{(2)}$ are Langlands dual. For this reason, we denote the latter by $\Uqghatdual$.

Let $P'$ be the sublattice of $P$ defined by 
$$P' = \bigoplus_{i \in I} \mathbb{Z} \check{d}_{\bar{i}} \omega_i,$$ where $\check{d}_{\bar{i}} = 1$ for $i \neq n$ and $\check{d}_{\bar{n}} = 2$. 

Following \cite{frenkel2011langlandsfinite,frenkel2011langlandsreps}, there is a bijective linear map
\begin{equation}\label{eqn: Langlands character dual map}
    \Pi : P' \to \leftindex^L{P}, \quad \check{d}_{\bar{i}} \omega_i \mapsto \check{\omega}_{\bar{i}}, \forall i.
\end{equation}

\begin{remark}\label{remark: P prime}
    Notice that the root lattice $R$ is a sublattice of $P'$. Therefore, for any irreducible representation $V$ of $\Uqghat$, if there exists a non-zero weight space $V_{\lambda} \neq \{0\}$ with $\lambda \in P'$, then all weights of $V$ lie in $P'$. In particular, it coincides with the notation $\Pi(\chi(V))$ used in the introduction. For this reason, we will focus on finite-dimensional irreducible representations whose highest weight lies in $P'$.
\end{remark}

Representations of quantum Kac--Moody algebras of Langlands dual types have been found to exhibit intriguing relations \cite{frenkel2011langlandsfinite,frenkel2011langlandsreps}. More precisely, it is conjectured that each irreducible representation of $\Uqghat$ admits a Langlands dual representation, in the following sense.

\begin{definition}
    Given an irreducible finite-dimensional representation $V$ of $\Uqghat$ of highest weight $\lambda \in P'$, an irreducible representation of $\Uqghatdual$ is called a Langlands dual representation of $V$, denoted by ${}^L{V}$, if it has highest weight $\Pi(\lambda)$ and 
    \[\chi^{\sigma}({}^L{V}) \preceq \Pi (\chi(V)),\]
    where the inequality means $\Pi (\chi(V)) - \chi^{\sigma}({}^L{V}) \in \NN [\leftindex^L {P}]$, i.e. the multiplicity of each weight on the left-hand side is less than or equal to that on the right-hand side.
\end{definition}

\begin{conjecture}[{\cite[Conjecture~2.2]{frenkel2011langlandsfinite}}]
    For any irreducible finite-dimensional representation $V$ of $\Uqghat$ whose highest weight lies in $P'$, there exists an irreducible representation ${}^L{V}$ of $\Uqghatdual$, which is a Langlands dual representation of $V$.
\end{conjecture}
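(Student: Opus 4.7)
The plan is to prove the conjecture in the case where $V = L(m)$ is a snake module of $\Uqghat$ of type $B_n^{(1)}$ with highest weight in $P'$, which is the main case handled by the path description of Mukhin--Young recalled in the next section. The strategy is to produce an explicit candidate ${}^L V$ by lifting $V$ through type $A_{2n-1}^{(1)}$ and then folding down to $A_{2n-1}^{(2)}$, and then to certify the character inequality via a combinatorial comparison of path models.

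First I would lift the dominant monomial $m \in \ZZ[Y_{i,a}^{\pm 1}]_{i \in \{1,\dots,n\},a}$ of type $B_n^{(1)}$ to a dominant monomial $\tilde m \in \ZZ[Y_{i,a}^{\pm 1}]_{i \in \{1,\dots,2n-1\},a}$ of type $A_{2n-1}^{(1)}$. The lift should symmetrize the type $B$ factors at indices $i<n$ by placing matched factors at indices $i$ and $2n-i$ (so that the folding map $\pi$ recovers a monomial of the right multiplicity, using $\check d_{\bar i}=1$ for $i<n$ and $\check d_{\bar n}=2$), and it should arrange the spectral parameters so that $L(\tilde m)$ is itself a snake module of type $A_{2n-1}^{(1)}$. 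Define
\[
{}^L V := L\bigl(\pi(\tilde m)\bigr),
\]
which, by the folding proposition recalled above, is an irreducible representation of $\Uqghatdual = \mathcal{U}_q\bigl(\widehat{\mathfrak{g}}^{\sigma}\bigr)$ of type $A_{2n-1}^{(2)}$. A direct check using the definition of $\Pi$ in \eqref{eqn: Langlands character dual map} shows that the highest weight of ${}^L V$ is exactly $\Pi(\lambda)$.

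It then remains to verify $\chi^{\sigma}({}^L V) \preceq \Pi(\chi(V))$. By Proposition~\ref{prop: fold character} the left side equals $\varpi(\chi(L(\tilde m)))$, so the task becomes the comparison
\[
\varpi\bigl(\chi(L(\tilde m))\bigr) \preceq \Pi\bigl(\chi(L(m))\bigr)
\]
between characters of snake modules in type $A_{2n-1}^{(1)}$ and $B_n^{(1)}$ respectively. I would prove this by constructing an injective map from the path model of $L(m)$ (of type $B_n$) into the path model of $L(\tilde m)$ (of type $A_{2n-1}$), sending each $B_n$-path to a suitable $A_{2n-1}$-path, and I would verify that the weight of the $B_n$-path, transported by $\Pi$, coincides with the weight of the image $A_{2n-1}$-path, transported by $\varpi$. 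Summing over paths then yields the required weight-by-weight domination.

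The main obstacle, and the technical heart of the argument, is the construction and injectivity of this path map. The short-root index $n$ of $B_n$ must be reconciled with the factor $\check d_{\bar n}=2$ and with the identification $Z_{\bar n,a}=Z_{\bar n,-a}$ in the twisted ring $\mathcal Z$; paths in type $B_n$ carry half-integer spectral information near the short node that, when lifted, must be distributed consistently across the paired type $A$ indices $n$ and $n$ so that pairs of distinct type $A_{2n-1}$-paths fold to a single type $A_{2n-1}^{(2)}$-path without overcounting. Once the combinatorial map is set up so as to be compatible with both the folding rule \eqref{eqn: folding pi}--\eqref{eqn: folding character} and the Langlands identification $\Pi$, the inequality follows formally and the conjecture is established for all snake modules.
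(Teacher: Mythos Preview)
Your overall strategy—pass through a snake module of type $A_{2n-1}^{(1)}$, fold to $A_{2n-1}^{(2)}$, then compare characters via an injection of path models—is the right one and matches the paper. But two concrete points would make the argument fail as written.

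First, the direction of the path injection is reversed. You want to prove $\chi^{\sigma}({}^L V)\preceq \Pi(\chi(V))$, i.e.\ $\varpi(\chi(L(\tilde m)))\preceq \Pi(\chi(L(m)))$. For that you must inject the paths indexing $\chi(L(\tilde m))$ (type $A_{2n-1}$) into the paths indexing $\chi(L(m))$ (type $B_n$), not the other way around. An injection $B_n\hookrightarrow A_{2n-1}$ preserving weights would give the opposite inequality. The paper indeed builds a map $F:\mathscr{P}^A_{i,k}\to\mathscr{P}^B_{i,2k}$ and checks it is injective on tuples of non-overlapping paths; this is what forces $\chi^{\sigma}({}^LV)$ to sit inside $\Pi(\chi(V))$.

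Second, the lift $\tilde m$ is misdescribed. Placing matched factors at both $i$ and $2n-i$ for $i<n$ would, after applying $\pi$, produce $Z_{\bar i,a}Z_{\bar i,-a}$ and hence double the highest weight at $\check\omega_{\bar i}$; the resulting module would not have highest weight $\Pi(\lambda)$. The correct lift keeps the indices in $\{1,\dots,n\}$: from the type $B_n$ shortened snake $(i_t,k_t)_t$ one takes the type $A_{2n-1}$ snake $(i_t,k_t/2)_t$ with all $i_t\le n$, and sets ${}^L m=\prod_t Z_{\bar{i_t},q^{k_t/2}}$. This gives exactly highest weight $\Pi(\lambda)$, and the halving of spectral parameters is what makes the path map $F$ (which doubles vertical coordinates) land in $\mathscr{P}^B_{i,2k}$. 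The symmetrized lift you describe is the construction relevant to the \emph{opposite} Langlands direction (Section~\ref{sec:A to B}), not this one.
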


The conjecture is stated for all pairs of Langlands dual algebras, beyond just the duality between $B_n^{(1)}$ and $A_{2n-1}^{(2)}$. It has been verified in the case where $V$ is a Kirillov-Reshetikhin module \cite[Theorem~2.3]{frenkel2011langlandsfinite}. Moreover, an algorithm was developed to compute the $q$-character of ${}^L {V}$, known as the interpolating $(q,t)$-characters.

Furthermore, it is conjectured that the expression $\Pi (\chi(V))$ is the character of an actual representation of $\Uqghatdual$.

\begin{conjecture}[{\cite[Conjecture~2.4]{frenkel2011langlandsfinite}}]
    For any irreducible finite-dimensional representation $V$ of $\Uqghat$ whose highest weight lies in $P'$, there is a representation of $\Uqghatdual$, denoted by $W$, such that
    \[\chi^{\sigma}(W) = \Pi(\chi(V)).\]

    Equivalently, $\Pi(\chi(V))$ can be decomposed as
    \[\Pi(\chi(V)) = \sum_{i} c_i \chi^{\sigma}(W_i),\]
    where $W_i$ are irreducible representations of $\Uqghatdual$ and $c_i > 0$.
\end{conjecture}

In this paper, we prove that these two conjectures hold when $V$ is a snake module of type $B_n^{(1)}$. Moreover, in this case, we provide an explicit formula for the terms $W_i$. For this reason, we refer to our formula as the Langlands branching rule.

\section{Path description}\label{sec:path description}
In this section, we recall the path descriptions introduced by Mukhin and Young \cite{mukhin2012path}. Path descriptions are combinatorial methods of describing the $q$-characters of snake modules of quantum affine algebras in type $A_{2n-1}^{(1)}$ or $B_n^{(1)}$, based on the Frenkel--Mukhin algorithm \cite{frenkel2001combinatorics}. Throughout this paper, the number $n \geq 2$ will be fixed. 

\subsection{Paths}\label{subsec: paths}
In this subsection, we review fundamental concepts of the paths defined in \cite{mukhin2012path}. We will adopt slightly different notation for convenience. For the original definitions and additional details, readers can refer to \cite[Section~5]{mukhin2012path}.

Let $I = \{1,2,\dots,2n-1\}$ (resp. $I = \{1,2,\dots,n\}$) be the set of indices of Dynkin diagram of type $A_{2n-1}$ (resp. type $B_n$). In type $B_n$, the label $n \in I$ corresponds to the short simple root.

Define 
\[\mathcal{X}^A = \{(i,k) \in I \times \ZZ~|~k \equiv n+i+1 \pmod 2\},\] and 
\[\mathcal{X}^B = \{(i,k) \in I \times \ZZ~|~k \equiv 2n+2i+2 \pmod 4\}.\]

\begin{definition}\label{def: paths}
    The sets of paths are defined as follows.
    \begin{itemize}
        \item Type $A_{2n-1}$: for $(i,k) \in \mathcal{X}^A$,
        \begin{align*}
           \mathscr{P}_{i,k}^{A}:=\{\big( (0,y_0), &(1,y_1), \dots, (2n, y_{2n}) \big)~| \\ &y_0= i+k, y_{2n}=2n-i+k, 
            \text{and $\lvert y_{r+1}-y_r \rvert=1$ for $0\leq r \leq 2n-1$}\}.
        \end{align*}

        \item Type $B_n$: fix an $\epsilon \in \RR$, $0 < \epsilon < 1/2$. For $(i,k) \in \mathcal{X}^B$,
        \begin{align*}
        &&\mathscr{P}_{i,k}^{B}:=\{\big( (0,y_0), &(2,y_1), \dots, (2n-4, y_{n-2}), (2n-2,y_{n-1}), (2n-1,y_n), \\
           && &(2n-1,z_n), (2n,z_{n-1}),\dots,(4n-4,z_1),(4n-2,z_0) \big)~|~y_n > z_n, \text{and} \\
           && & y_0= 2i+k,  \lvert y_{n}-y_{n-1} \rvert=1+\epsilon,  \text{and $\lvert y_{r+1}-y_r \rvert=2$ for $0\leq r\leq n-2$,}\\
           &&z_0= 4n-&2i+k-2,  \lvert z_{n}-z_{n-1} \rvert=1+\epsilon,  \text{and $\lvert z_{r+1}-z_r \rvert=2$ for $0\leq r\leq n-2$}\}.
        \end{align*}
    \end{itemize}
\end{definition}

Paths of type $B_n$ will be illustrated in Figure~\ref{illustrate of P_ik}.

For simplicity, we use the notation $\mathscr{P}_{i,k}$ to represent either $\mathscr{P}_{i,k}^A$ or $\mathscr{P}_{i,k}^B$ when the type is irrelevant or can be inferred from context. A similar omission of upper indices will apply to other symbols as well, and we will not repeat this explanation.

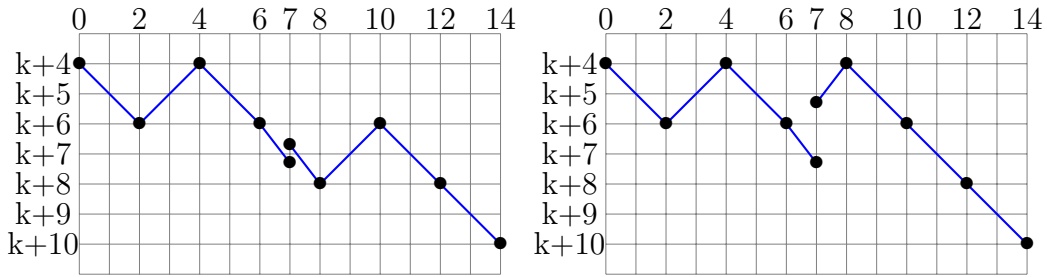
\begin{figure}[htp]
    \centering
\begin{tikzpicture}[scale=0.4]
\draw[step=1cm, gray, very thin] (0, 0) grid (14, 8);
\node at (0, 8.5) {0}; \node at (1, 8.5) {};\node at (2, 8.5) {2};
\node at (3, 8.5) {};\node at (4, 8.5) {4}; \node at (5, 8.5) {};\node at (6, 8.5) {6};
\node at (7, 8.5) {7}; \node at (8, 8.5) {8};\node at (9, 8.5) {};\node at (10, 8.5) {10}; 
\node at (11, 8.5) {}; \node at (12, 8.5) {12}; \node at (13, 8.5) {}; \node at (14, 8.5) {14}; 
\node at (-1.2, 0) { }; \node at (-1.2, 1) {k+10};
\node at (-1.2, 2) {k+9};\node at (-1.2, 3) {k+8};\node at (-1.2, 4) {k+7}; \node at (-1.2, 5) {k+6}; \node at (-1.2, 6) {k+5}; \node at (-1.2, 7) {k+4}; \node at (-1.2, 8) { }; 
\draw[blue, thick] (0,7)--(2,5)--(4,7) --(6,5)--(7,3.7);
\draw[blue, thick] (7,4.3)--(8,3)--(10,5) --(14,1);
\node at (0,7) {$\bullet$};\node at (2,5) {$\bullet$};\node at (4,7) {$\bullet$};\node at (6,5) {$\bullet$};\node at (7,3.7) {$\bullet$};\node at (7,4.3) {$\bullet$};\node at (8,3) {$\bullet$};\node at (10,5) {$\bullet$};\node at (14,1) {$\bullet$};\node at (12,3) {$\bullet$};
\begin{scope}[xshift=17.5cm]
\draw[step=1cm, gray, very thin] (0, 0) grid (14, 8);
\node at (0, 8.5) {0}; \node at (1, 8.5) {};\node at (2, 8.5) {2};
\node at (3, 8.5) {};\node at (4, 8.5) {4}; \node at (5, 8.5) {};\node at (6, 8.5) {6};
\node at (7, 8.5) {7}; \node at (8, 8.5) {8};\node at (9, 8.5) {};\node at (10, 8.5) {10}; 
\node at (11, 8.5) {}; \node at (12, 8.5) {12}; \node at (13, 8.5) {}; \node at (14, 8.5) {14}; 

\node at (-1.2, 0) { }; \node at (-1.2, 1) {k+10};
\node at (-1.2, 2) {k+9};\node at (-1.2, 3) {k+8};\node at (-1.2, 4) {k+7}; \node at (-1.2, 5) {k+6}; \node at (-1.2, 6) {k+5}; \node at (-1.2, 7) {k+4}; \node at (-1.2, 8) { }; 

\draw[blue, thick] (0,7)--(2,5)--(4,7) --(6,5)--(7,3.7);
\draw[blue, thick] (7,5.7)--(8,7)--(10,5) --(14,1);

\node at (0,7) {$\bullet$};\node at (2,5) {$\bullet$};\node at (4,7) {$\bullet$};\node at (6,5) {$\bullet$};\node at (7,3.7) {$\bullet$};\node at (7,5.7) {$\bullet$};\node at (8,7) {$\bullet$};\node at (10,5) {$\bullet$};\node at (14,1) {$\bullet$};\node at (12,3) {$\bullet$};
\end{scope}

\end{tikzpicture}
 \caption{ Two paths of type $B_4$ in $\mathscr{P}^B_{2,k}$. Two consecutive points are connected by a blue edge. Note that the vertical coordinates increase from top to bottom.}
    \label{illustrate of P_ik} 
\end{figure}

\begin{definition}
    \begin{enumerate}
        \item A \textit{path} $p$ is an element of the set $\mathscr{P}_{i,k}$. It is a finite sequence of points in $\RR^2$.

        \item Denote by $(x,y)\in p$ if the point $(x,y)$ is an element of the finite sequence $p$.

        \item In type $B_n$, we call the subsequence 
        \[\big( (0,y_0), (2,y_1), \dots, (2n-4, y_{n-2}), (2n-2,y_{n-1}), (2n-1,y_n)\big)\] the left branch of the path, and
        \[\big( (2n-1,z_n), (2n,z_{n-1}),(2n+2,z_{n-2}), \dots,(4n-4,z_1),(4n-2,z_0) \big)\] the right branch.
    \end{enumerate}
\end{definition}

\begin{remark}\label{remark: path new definition}
    Our definitions correspond to the original definitions in the following way:

    \begin{itemize}
        \item In $\mathscr{P}_{i,k}^{A}$, we changed the condition $k \equiv i+1 \pmod 2$ in \cite[Section~5.1]{mukhin2012path} to $k \equiv n+i+1 \pmod 2$.

        \item A path in $\mathscr{P}_{n,k}^{B}$ is a pair of paths in $(\mathscr{P}_{n,k-1},\mathscr{P}_{n,k+1})$ of type $B_n$ in \cite[Section~5.1]{mukhin2012path}, because we will always use two paths of type $B_n$ in pair.
    \end{itemize}
\end{remark}

We recall the definitions of upper corners and lower corners, following \cite[Section~5.2]{mukhin2012path}.

\begin{definition}
     Denote by $X^B = \big( \{0,1,\dots,n-1\} \times 2\ZZ \big) \sqcup \big(\{n\} \times (2\ZZ+1)\big)$.
    Define an injective map
        \begin{equation}\label{eqn: tau}
        \begin{split}
        \tau: X^B &\rightarrow \ZZ \times \ZZ  \\
        (j,\ell)&\mapsto 
        \begin{cases}
        (2j,\ell), &\text{if}~j<n~\text{and}~\ell \equiv 2n+2j+2 \pmod 4,\\
        (4n-2-2j,\ell), &\text{if}~j<n~\text{and}~\ell \equiv 2n+2j\pmod 4,\\
        (2n-1,\ell), &\text{if}~j=n.
        \end{cases}
        \end{split}
        \end{equation}
\end{definition}

\begin{itemize}
    \item 
In type $A_{2n-1}$: let $p=\big( (j_r,y_r)\big)_{0\leq r\leq 2n}\in\mathscr{P}_{i,k}^{A}$. Define
\[
C_{p}^{-}:=\{(j_r,y_r)~|~1 \leq r \leq 2n-1, y_{r-1}=y_{r}-1=y_{r+1}\},
\]
called the set of \textit{lower corners}, and
\[
C_{p}^{+}:=\{(j_r,y_r)~|~1 \leq r \leq 2n-1, y_{r-1}=y_{r}+1=y_{r+1}\},
\]
called the set of \textit{upper corners}.

\item
In type $B_n$, let $p=\big( (j_r,y_r)\big)_{0\leq r\leq 2n+1}\in\mathscr{P}_{i,k}^{B}$. Define
\begin{align*}
C_{p}^{-}:=\tau^{-1}\{(j_r,y_r)\in p~|~j_r\notin \{0,2n-1,4n-2\}, y_{r-1}<y_{r}, y_{r+1}<y_r \} \\ \sqcup \{(n,\ell)~|~(2n-1, \ell+\epsilon)\in p~\text{and}~(2n-1, \ell-\epsilon)\notin p \}, 
\end{align*}
called the set of \textit{lower corners}, and
\begin{align*}
C_{p}^{+}:=\tau^{-1}\{(j_r,y_r)\in p~|~j_r\notin \{0,2n-1,4n-2\}, y_{r-1}>y_{r}, y_{r+1}>y_r \} \\
\sqcup \{(n,\ell)~|~(2n-1, \ell-\epsilon)\in p~\text{and}~(2n-1, \ell+\epsilon)\notin p \},
\end{align*}
called the set of \textit{upper corners}.
\end{itemize}

\subsection{Snake positions}\label{subsection:snakeposition}

In this subsection, we recall the definition of snakes and make a slight modification for the purposes of this paper.

We introduce the notion of shortened snake positions. Let $\mathcal{X}$ be the set $\mathcal{X}^A$ or $\mathcal{X}^B$ defined in Section~\ref{subsec: paths}.
\begin{definition}\label{def: shortened snakes}
    For $(i,k)\in \mathcal{X}$, another point $(i',k') \in \mathcal{X}$ is said in \textit{shortened snake position} with respect to $(i,k)$ if 
    \begin{itemize}
        \item 
            Type $A_{2n-1}$:\[ k'-k\geq \lvert i'-i\rvert +2.\]
        \item 
            Type $B_n$:
            \[k'-k\geq 2\lvert i'-i\rvert +4.\]
    \end{itemize}
\end{definition}

Shortened snakes are defined as follows.
\begin{definition}\label{def: definition of snakes}
    A \textit{shortened snake} is a finite sequence $(i_t, k_t)_{1 \leq t \leq T}$, $T\in \mathbb{N}^*$, of points in $\mathcal{X}$ such that for $2 \leq t \leq T$, $(i_t,k_t)$ is in shortened snake position with respect to $(i_{t-1},k_{t-1})$. 
\end{definition}

The shortened snakes are related to the notion of snakes in \cite[Section~4.2]{mukhin2012path} and \cite[Section~3.2]{duan2019cluster} in the following way.

In type $A_{2n-1}$, a shortened snake coincides exactly with an ordinary snake. Hence, in this type, we will use the term snake interchangeably.

In type $B_n$, given a shortened snake $(i_t, k_t)_{1 \leq t \leq T}$, we replace the terms $(i_t,k_t)$ in the sequence by two terms $(i_t,k_t-1)(i_t,k_t+1)$ whenever $i_t =n$, and retain all other terms unchanged. The resulting sequence is then a snake in the sense of \cite{mukhin2012path,duan2019cluster}. This gives an injective (but not surjective) map
\[\{\text{shortened snakes}\} \to \{\text{snakes}\}.\]

We use the notion of shortened snakes so that the weights of the snake module will be contained in the sublattice $P'$ of $P$, as we have seen in Remark~\ref{remark: P prime}.

\subsection{Non-overlapping paths and snake modules}
For two paths $p$ and $p'$ of the same type, $p$ is said being strictly above $p'$ (or, $p'$ strictly below $p$) if
\[
\forall (x,y) \in p, (x,z) \in p' \Rightarrow y < z.
\]
In this case, we denote by $p \succ p'$.

A $T$-tuple of paths $(p_1, \dots , p_T )$ is said to be \textit{non-overlapping} if $p_1 \succ p_2 \succ \cdots \succ p_T$.

For any shortened snake $\big((i_1,k_1)(i_2,k_2)\cdots(i_T,k_T)\big)$, $T\in\mathbb{N}^*$, define
\[
\overline{\mathscr{P}}_{(i_t,k_t)_{1\leq t \leq T}}:=\{(p_1,\dots,p_T)~|~p_t \in\mathscr{P}_{i_t,k_t},1\leq t \leq T,(p_1,\dots,p_T)~\text{is non-overlapping}\}.
\]
This is the set of $T$-tuples of \textit{non-overlapping paths} (NOP), where the $t$-th path is in the set $\mathscr{P}_{i_{t},k_{t}}$ determined by the snake.

The paths are combinatorial tools for describing the $q$-characters of a large class of representations of quantum affine algebras, called snake modules \cite{mukhin2012path}. 

\begin{definition}\label{def: snake modules}
    Let $\Uqghat$ be a quantum affine algebra of type $A_{2n-1}^{(1)}$ or $B_n^{(1)}$. Let $(i_t,k_t)_{1\leq t\leq T}$ be a shortened snake of the corresponding type. Let $L(m)$ be the irreducible $l$-highest weight representation of $\Uqghat$, with the highest $l$-weight
    \begin{equation}\label{eqn: m snake highest weight}
        \begin{split}
            & m = \prod_{1 \leq t \leq T} Y_{i_t,q^{k_t}},~\text{ in type $A_{2n-1}$}, \\
            & m = \prod_{t:i_t \neq n} Y_{i_t,q^{k_t}} \prod_{t: i_t = n} Y_{n,q^{k_t-1}}Y_{n,q^{k_t+1}},~\text{ in type $B_n$}. 
        \end{split}
    \end{equation}
    These are known as \textit{snake modules} of $\Uqghat$.
\end{definition}

The path description provides a method to compute both the usual characters and $q$-characters of snake modules of types $A_{2n-1}^{(1)}$ and $B_n^{(1)}$.

\begin{theorem}\cite[Theorem~6.1]{mukhin2012path}\label{path description}
Let $(i_t,k_t)_{1\leq t\leq T}$ be a shortened snake and $m$ as in \eqref{eqn: m snake highest weight}, then we have
\[
\chi_q\big( L(m) \big) = \mathop{\sum}\limits_{(p_1,\dots,p_T)\in\overline{\mathscr{P}}_{(i_t,k_t)_{1\leq t\leq T}}}\prod_{t=1}^{T}\mathfrak{m}(p_t),
\]
where 
\begin{equation}\label{eqn: monomial m of p}
\mathfrak{m}(p) = \prod_{(j,\ell) \in C_p^+} Y_{j,q^\ell} \prod_{(j,\ell) \in C_p^-} Y_{j,q^\ell}^{-1} \quad \text{for any path $p$.}
\end{equation}
\end{theorem}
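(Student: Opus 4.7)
The plan is to prove this theorem via the \emph{Frenkel--Mukhin algorithm}, which takes as input the highest $l$-weight monomial $m$ of a \emph{special} representation $L(m)$ (one whose $q$-character contains $m$ as the unique dominant monomial) and recursively outputs $\chi_q(L(m))$. The strategy, essentially that of Mukhin and Young, is first to establish that snake modules of types $A_{2n-1}^{(1)}$ and $B_n^{(1)}$ are special, and then to show that the proposed sum over non-overlapping tuples of paths is the unique element of $\ringYa$ satisfying the structural constraints that characterize the FM output; hence it must equal $\chi_q(L(m))$.

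The first step is specialness: for a shortened snake $(i_t,k_t)_{1 \leq t \leq T}$, one shows that the only dominant monomial in $\chi_q(L(m))$ is $m$ itself. Since every monomial in $\chi_q(L(m))$ has the form $m \cdot \prod_{i,\ell} A_{i,q^\ell}^{-u_{i,\ell}}$ with $u_{i,\ell} \in \NN$ (where $A_{i,a}$ are the Frenkel--Reshetikhin simple monomials), the task reduces to a combinatorial bookkeeping check that no nonzero collection of such inverse-$A$ factors can cancel all negative exponents. The sharpness of the shortened snake position bound, $k_{t+1}-k_t \geq |i_{t+1}-i_t| + 2$ in type $A$ and $\geq 2|i_{t+1}-i_t| + 4$ in type $B$, is exactly what is needed to keep the ``negative shadows'' of any $A^{-1}$-cascade from being cancelled by the $Y$-factors at neighboring snake points.

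The second step is the combinatorial identification. For a single-point snake $T=1$ the bijection between paths in $\mathscr{P}_{i,k}$ and $l$-weights of the fundamental module $L(Y_{i,q^k})$ (respectively the KR module $L(Y_{n,q^{k-1}}Y_{n,q^{k+1}})$ when $i=n$ in type $B_n$) is classical, essentially due to Chari--Pressley and Nakajima. Given this base case, one verifies that the formal sum
\[
S \;:=\; \sum_{(p_1,\dots,p_T) \in \overline{\mathscr{P}}_{(i_t,k_t)}} \prod_{t=1}^T \mathfrak{m}(p_t)
\]
has $m$ as its unique dominant monomial, with coefficient $1$, and closes up under the FM recursion started from $m$. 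The non-overlapping condition $p_1 \succ \cdots \succ p_T$ is precisely what prevents dominant monomials other than $m$ from appearing in $S$: whenever two paths touch or cross, an upper corner of one would cancel a lower corner of the other, producing a candidate dominant monomial, and the combinatorics of overlapping pairs is arranged so that these candidates are exactly the forbidden ones. By the uniqueness clause of the FM algorithm for special modules, $S = \chi_q(L(m))$.

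The main obstacle is the specialness assertion in Step 1: ruling out, via a delicate monomial-level analysis, every possible cascade of $A^{-1}$-factors that might conspire to produce a second dominant term. A secondary but non-trivial subtlety occurs in type $B_n$ whenever $i_t = n$: one must manage the branching of paths into left and right halves, the role of the $\epsilon$-perturbation in $\mathscr{P}_{i,k}^B$, and the asymmetric behavior of the short-node simple factor $A_{n,q^\ell}$, verifying that the upper/lower corner conventions at $x = 2n-1$ match the $\phi$-eigenvalues at the short node. Once these checks are in place the FM algorithm converts the combinatorics of non-overlapping paths directly into the claimed $q$-character formula.
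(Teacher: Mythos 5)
This statement is not proved in the paper at all: it is quoted verbatim (for shortened snakes) from Mukhin--Young \cite[Theorem~6.1]{mukhin2012path}, so there is no internal proof to compare against. Your outline is essentially the strategy of that original source: establish that snake modules are special, and then identify the sum over non-overlapping path tuples with the $q$-character via the Frenkel--Mukhin machinery \cite{frenkel2001combinatorics}, using uniqueness of the dominant monomial. As a plan this is the right route, but be aware that the two steps you defer --- ruling out every cascade of $A^{-1}$-factors that could create a second dominant monomial, and verifying that the non-overlapping path sum actually satisfies the criterion characterizing $\chi_q(L(m))$ (in Mukhin--Young this is done by checking, colour by colour, that the sum decomposes into $\mathfrak{sl}_2$-strings via explicit lowering moves on tuples of paths, rather than by literally re-running the FM recursion) --- are precisely the technical heart of \cite{mukhin2012path}, including the delicate bookkeeping at the short node in type $B_n$ and the translation between snakes and the shortened snakes used here; asserting them is not yet proving them.
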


Similarly, for a $T$-tuple of NOP $\overline{p} = (p_1,\dots,p_T)$, we denote by 
\begin{equation}\label{eqn: monomial m of non-overlapping paths}
    \mathfrak{m}(\overline{p}) = \prod_{t=1}^T \mathfrak{m}(p_t).
\end{equation}

By \cite[Theorem~3]{frenkel1999q}, the usual character $\chi$ can be obtained from the $q$-character directly.

\begin{corollary}\label{character-path}
 Let $(i_t,k_t)_{1\leq t\leq T}$ be a shortened snake and $m$ as in \eqref{eqn: m snake highest weight}, then the usual character
\[
\chi \big( L(m)\big) = \mathop{\sum}\limits_{(p_1,\dots,p_T)\in\overline{\mathscr{P}}_{(i_t,k_t)_{1\leq t\leq T}}}\bigg[\sum_{t=1}^{T}\mathfrak{m}'(p_t) \bigg],
\]
where 
\begin{equation}\label{eqn: m prime of p}
\mathfrak{m}'(p) = \sum_{(j,\ell) \in C_p^+} \omega_{j} - \sum_{(j,\ell) \in C_p^-} \omega_{j} \quad \text{for any path $p$.}
\end{equation}
\end{corollary}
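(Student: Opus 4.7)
The plan is to deduce Corollary~\ref{character-path} directly from Theorem~\ref{path description} by applying the ring homomorphism
\[
\varphi : \ZZ[Y_{i,a}^{\pm 1}]_{i\in I,a\in\CC^*} \longrightarrow \ZZ[P], \qquad \prod_{i,a} Y_{i,a}^{u_{i,a}} \;\longmapsto\; \Big[\textstyle\sum_{i,a} u_{i,a}\,\omega_i\Big],
\]
which, as recalled at the end of Section~\ref{subsec: prem non twisted} from \cite[Theorem~3]{frenkel1999q}, satisfies $\varphi\big(\chi_q(V)\big) = \chi(V)$ for every finite-dimensional $\Uqghat$-module $V$. Concretely, I would start from
\[
\chi_q\big(L(m)\big) \;=\; \sum_{(p_1,\dots,p_T)\in\overline{\mathscr{P}}_{(i_t,k_t)_{1\leq t\leq T}}} \prod_{t=1}^{T}\mathfrak{m}(p_t)
\]
given by Theorem~\ref{path description}, and apply $\varphi$ to both sides.

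The only thing to check is how $\varphi$ acts on the combinatorial monomials: for a single path $p$, comparing \eqref{eqn: monomial m of p} and \eqref{eqn: m prime of p} one sees immediately that
\[
\varphi\big(\mathfrak{m}(p)\big) \;=\; \Big[\sum_{(j,\ell)\in C_p^+}\!\omega_j \;-\; \sum_{(j,\ell)\in C_p^-}\!\omega_j\Big] \;=\; \big[\mathfrak{m}'(p)\big],
\]
since $\varphi$ forgets the spectral parameter $q^\ell$ and converts multiplication of $Y_{j,\cdot}^{\pm 1}$ into addition of $\pm\omega_j$ inside the bracket. Since $\varphi$ is a ring homomorphism, the product $\prod_{t=1}^T \mathfrak{m}(p_t)$ is sent to the single bracketed sum $\big[\sum_{t=1}^T \mathfrak{m}'(p_t)\big]$, which is exactly the summand appearing in the stated formula. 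There is no genuine obstacle here: the corollary is a direct translation of Theorem~\ref{path description} under the character-specialisation map $\varphi$, and no further combinatorial input about non-overlapping paths is needed.
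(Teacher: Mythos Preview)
Your proposal is correct and is exactly the approach the paper takes: the corollary is stated immediately after Theorem~\ref{path description} with the one-line justification ``By \cite[Theorem~3]{frenkel1999q}, the usual character $\chi$ can be obtained from the $q$-character directly,'' which is precisely the specialisation map $\varphi$ you describe.
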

We also write 
\[\mathfrak{m}'(\overline{p}) = \sum_{t=1}^T \mathfrak{m}'(p_t)\]
for a $T$-tuple of NOP $\overline{p} = (p_1,\dots,p_T)$.

\subsection{Path descriptions for twisted type \texorpdfstring{$A_{2n-1}^{(2)}$}{}}
This section is devoted to prove the following proposition.
\begin{proposition}\label{pro:snake modules conjecture}
    Let $L(m)$ be a snake module of the quantum affine algebra of type $A_{2n-1}^{(1)}$, let $\bar{\pi}$ be the folding map between the Grothendieck rings in \eqref{eqn:bar pi of Grothendieck rings} from type $A_{2n-1}^{(1)}$ to $A_{2n-1}^{(2)}$. Then we have 
    \[\bar{\pi} ([L(m)]) = [L(\pi(m))].\]
\end{proposition}

Recall that by the definition of $\bar{\pi}$, this is equivalent to say that $\pi(\chi_q(L(m))) = \chi_q^{\sigma}(L(\pi(m)))$. 

Since $\chi_q(L(m))$ has a path description 
\[\chi_q(L(m)) = \sum_{m' \in \mathcal{M}} m',\]
where $\mathcal{M}$ is the finite set of monomials $\{ \mathfrak{m}(\overline{p})~|~\overline{p} \in\overline{\mathscr{P}}_{(i_t,k_t)_{1\leq t\leq T}} \}$, Proposition~\ref{pro:snake modules conjecture} is equivalent to

\begin{proposition}
    Let $m$ and $\mathcal{M}$ be as above. Then
    \[\chi_q^{\sigma} (L(\pi(m))) = \sum_{\pi(m') \in \pi(\mathcal{M})} \pi(m'),\]
    where $\pi(\mathcal{M}) = \{ \pi \circ \mathfrak{m}(\overline{p})~|~\overline{p} \in\overline{\mathscr{P}}_{(i_t,k_t)_{1\leq t\leq T}} \}$ is a finite set of monomials in the ring $\mathcal{Z}$.
\end{proposition}
\begin{proof}
    The proof largely follows the arguments of \cite[Theorem~3.4]{mukhin2012path} and \cite[Theorem~6.1]{mukhin2012path}. We briefly recall the main steps, emphasizing the differences in the twisted case.

    \begin{itemize}
        \item The proof of \cite[Theorem~3.4]{mukhin2012path} used \cite[Proposition~3.3]{mukhin2012path}. The twisted analogue of this proposition is proved in \cite[Proposition 2.16]{Dahiya2025}.

        \item The condition (i) in \cite[Theorem~3.4]{mukhin2012path} is verified for $m$ and $\mathcal{M}$ by \cite[Lemma~5.14]{mukhin2012path}. Moreover, when restricting $\mathcal{M}$ to be a set of monomials in $\mathbb{Z}[Y_{i,q^n}]_{i \in I, n \in \ZZ}$, we have 
        \[\pi(m') \in \pi(\mathcal{M}) \text{ is  dominant} \iff m' \in \mathcal{M} \text{ is dominant}.\] Thus $\pi(m)$ and $\pi(\mathcal{M})$ also verify the condition (i).

        \item The condition (ii)  is verified for $\mathcal{M}$ by \cite[Lemma~5.12]{mukhin2012path}, which directly implies that it also holds for $\pi(\mathcal{M})$.

        \item The condition (iii) can be verified as follows:  when $\bar{i} = \bar{n}$, this is simply an $\mathfrak{sl}_2$-restriction and follows directly from \cite[Lemma~5.13]{mukhin2012path}. When $\bar{i} \neq \bar{n}$, it is known that for a $\mathcal{U}_q(\hat{\mathfrak{sl}}_2)$-module $L(m_1 m_2)$, where $m_1 \in \ZZ[Y_{q^n}]_{n \in \ZZ}$ and $m_2 \in \ZZ[Y_{-q^n}]_{n \in \ZZ}$, we have
        \[L(m_1m_2) \simeq L(m_1) \otimes L(m_2),\]
        and in particular
        \[\chi_q(L(m_1m_2)) = \chi_q(L(m_1))\chi_q(L(m_2)).\]
        For $m' = \prod_{i \in I, a \in \CC^*} Y_{i,a}^{u_{i,a}}$, we use the notation $-m' := \prod_{i \in I, a \in \CC^*} Y_{i,-a}^{u_{i,a}}$.
        Following the notation in \cite[Theorem~3.4]{mukhin2012path}, if $\pi(M) \in \pi(\mathcal{M})$ is $\bar{i}$-dominant, then $M$ is both $i$-dominant and $(2n-i)$-dominant, and
        \[\chi_q(L(\beta_{\bar{i}}(\pi(M)))) = \chi_q(L(\beta_i(M))) \chi_q(L(\beta_{2n-i}(-M))).\] 
        Condition (iii) is verified immediately from the computation 
        \[\begin{split}
            &\sum_{\pi(m') \in \pi(m)\ZZ[A^{\pm 1}_{\bar{i},\pm q^n}]_{n \in \ZZ} \cap \pi(\mathcal{M})} \beta_{\bar{i}}(\pi(m')) \\
            & = \sum_{\pi(m') \in \pi(m)\ZZ[A^{\pm 1}_{\bar{i},\pm q^n}]_{n \in \ZZ} \cap \pi(\mathcal{M})} \beta_{i}(m') \beta_{2n-i}(-m') \\
            & = \sum_{m' \in m\ZZ[A^{\pm 1}_{i,q^n}]_{n \in \ZZ} \cap \mathcal{M}} \beta_{i}(m') \times \sum_{m'' \in m\ZZ[A^{\pm 1}_{2n-i,q^n}]_{n \in \ZZ} \cap \mathcal{M}} \beta_{2n-i}(-m'').
        \end{split}
        \]
        The last equation follows from the fact that for a fixed $i < n$, if $m\prod_{a} A_{i,a}^{u_{a}} \in \mathcal{M}$ and $m\prod_{a} A_{2n-i,a}^{v_{a}} \in \mathcal{M}$, then $m\prod_{a} A_{i,a}^{u_{a}} A_{2n-i,a}^{v_{a}} \in \mathcal{M}$, where $u_a,v_a \in \ZZ$.
    \end{itemize} 
    In conclusion, the proof and results in \cite[Theorem~3.4]{mukhin2012path} and \cite[Theorem~6.1]{mukhin2012path} apply to $q$-characters of snake modules of twisted quantum affine algebras of type $A_{2n-1}^{(2)}$.
\end{proof}

For terminological convenience, we extend the definition of snake modules to twisted quantum affine algebra of type $A_{2n-1}^{(2)}$.
\begin{definition}
    A snake module of twisted quantum affine algebra $\Uqghatsigma$ of type $A_{2n-1}^{(2)}$ is an irreducible finite-dimensional $l$-highest weight representation $L(m)$, $m \in \mathcal{Z}$, such that 
    \[m = \prod_{1 \le t \le T} Z_{\bar{i_t},q^{k_t}},\]
    where $(i_t,k_t)_{1 \le t \le T}$ is a snake of type $A_{2n-1}$ such that $1 \le i_t \le n$ for all $t$.
\end{definition}

Recall that usual characters can be obtained directly from $q$-characters \cite[Theorem~3]{frenkel1999q}. We define the folding map on usual characters.

\begin{definition}
    Let $\varpi$ be the ring homomorphism \begin{equation}\label{eqn: folding character}
    \varpi : \ZZ[\bigoplus_{1 \le i \le 2n-1} \ZZ \omega_i] \to \ZZ[\bigoplus_{i \in \{\bar{1},\dots,\bar{n}\}} \ZZ \check{\omega}_i] =  \ZZ[\leftindex^L{P}],
    \end{equation}
which maps $[\sum_{i=1}^{2n-1} k_i\omega_{i}]$ to $[\sum_{i=1}^{n-1} (k_i+k_{2n-i})\check{\omega}_{\bar{i}} +k_n\check{\omega}_{\bar{n}}]$.
\end{definition}

As a consequence of Proposition~\ref{pro:snake modules conjecture}, we have
\begin{proposition}\label{prop: folding snake formula}
    Let $(i_t,k_t)_{1 \le t \le T}$ be a snake of type $A_{2n-1}$ such that $1 \le i_t \le n$ for all $t$. Let $L(\prod_{1 \le t \le T} Y_{i_t,q^{k_t}})$ be the corresponding snake module of type $A_{2n-1}^{(1)}$ and $L(\prod_{1 \le t \le T} Z_{\bar{i_t},q^{k_t}})$ be the corresponding snake module of type $A_{2n-1}^{(2)}$. Then
    \[\chi^{\sigma}(L(\prod_{1 \le t \le T} Z_{\bar{i_t},q^{k_t}})) = \varpi (\chi(L(\prod_{1 \le t \le T} Y_{i_t,q^{k_t}}))).\]
\end{proposition}

\section{Langlands dual from \texorpdfstring{$B_n^{(1)}$}{} to \texorpdfstring{$A_{2n-1}^{(2)}$}{}}\label{sec:Langlands B to A}
In this section, we work within type $A_{2n-1}$ and type $B_{n}$, $n\geq 2$. The goal of this section is to construct an injective map which associates a tuple of non-overlapping paths of type $A_{2n-1}$ with a tuple of non-overlapping paths of type $B_n$. As a consequence, we deduce that snake modules of type $B_n^{(1)}$ admit Langlands dual representations.

\subsection{The map between sets of points}
In this subsection, we begin with constructing the map on the level of points.

Define the set of points 
\[X^A = \{(j,\ell)\in \{0,\dots,2n\} \times \ZZ~|~\ell \equiv n + j + 1 \pmod 2 \}.\]

We construct a map $f$ from the set of points $X^A$ to the set of points $X^B$. Recall that 
\[X^B = \big( \{0,1,\dots,n-1\} \times 2\ZZ \big) \sqcup \big(\{n\} \times (2\ZZ+1)\big).\]

\begin{definition}\label{def: map f between sets of points}
Define an injective map
\begin{align*}
f: X^A &\rightarrow X^B\\
(j,\ell) &\mapsto \begin{cases}(j,2\ell), &\text{if}~j< n,\\
(n, 2\ell-1), &\text{if}~j= n,\\
(2n-j,2\ell-2), &\text{if}~j> n.
\end{cases}
\end{align*}
\end{definition}
This is injective because if $(j,\ell),(2n-j,\ell') \in X^A$, then $\ell \equiv \ell' \pmod 2$, thus $2\ell \not\equiv 2\ell'-2 \pmod 4$.

\subsection{The map between paths}
The goal of this subsection and the next subsection is to construct a map which associates a tuple of NOP of type $A_{2n-1}$ with a tuple of NOP of type $B_n$. We begin our construction for one single path.

\begin{definition}\label{def: map F between paths}
    Let $p \in \mathscr{P}^{A}_{i,k}$, $i \leq n$. We define a path $F(p)$ in $\mathscr{P}^{B}_{i,2k}$ in the following way.

    Write \[p = \big( (0,x_0),(1,x_1),\dots,(2n,x_{2n})  \big).\]

    Then we define 
    \begin{align*}
        F(p) = \big( (0,y_0), &(2,y_1), \dots, (2n-4, y_{n-2}), (2n-2,y_{n-1}), (2n-1,y_n), \\
         &(2n-1,z_n), (2n,z_{n-1}),\dots,(4n-4,z_1),(4n-2,z_0) \big),
    \end{align*}

    with coordinates 
    \[y_j = 
    \begin{cases}
        2x_j, &\text{if}~j < n,\\
        2x_n-1 + \epsilon, &\text{if}~j=n~\text{and}~x_n>x_{n-1}, \\
        2x_n+1-\epsilon, &\text{if}~j=n~\text{and}~x_n<x_{n-1},
    \end{cases}
    \]
    and
    \[z_j = 
    \begin{cases}
        2x_{2n-j}-2, &\text{if}~j < n,\\
        2x_n-3 + \epsilon, &\text{if}~j=n~\text{and}~x_n>x_{n+1}, \\
        2x_n-1-\epsilon, &\text{if}~j=n~\text{and}~x_n<x_{n+1}.
    \end{cases}
    \]
\end{definition}

    One can easily verify that $F(p)$ is indeed a path in $\mathscr{P}^{B}_{i,2k}$. An example of $p$ and $F(p)$ can be illustrated in Figure~\ref{illustrate of F}.

\begin{lemma}\label{lemma: F injective}
    The map $F: \mathscr{P}_{i,k}^A \to \mathscr{P}_{i,2k}^B$ is injective.
\end{lemma}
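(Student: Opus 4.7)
The plan is to show injectivity by exhibiting an explicit left inverse, i.e.\ showing that every coordinate $x_j$ of $p = \big((0,x_0),(1,x_1),\dots,(2n,x_{2n})\big) \in \mathscr{P}_{i,k}^{A}$ can be recovered from $F(p)$. I will split the recovery according to the index $j$.

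First, for $j < n$, the definition gives $y_j = 2 x_j$ on the left branch of $F(p)$, so $x_j = y_j/2$ is immediately determined. Symmetrically, for $j > n$, the right branch of $F(p)$ contains the point with second coordinate $z_{2n-j} = 2 x_j - 2$, so $x_j = (z_{2n-j} + 2)/2$ is also determined. Thus all of $x_0,\dots,x_{n-1}$ and $x_{n+1},\dots,x_{2n}$ can be read off unambiguously from $F(p)$.

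The only remaining coordinate is $x_n$, for which there are two cases in the definition of $F$, distinguished by the sign of $x_n - x_{n-1}$. Here I will use the deliberate presence of the non-integer parameter $\epsilon$ (with $0 < \epsilon < 1/2$) to tell the two cases apart. Since $y_{n-1} = 2 x_{n-1}$ is an integer already recovered, and
\[
y_n - y_{n-1} =
\begin{cases}
\phantom{-}(1+\epsilon), & \text{if } x_n > x_{n-1}, \\
-(1+\epsilon), & \text{if } x_n < x_{n-1},
\end{cases}
\]
the fractional part of $y_n$ together with the comparison $y_n \gtrless y_{n-1}$ identifies which case of Definition~\ref{def: map F between paths} produced $y_n$; solving the corresponding linear equation then gives $x_n$ uniquely. (As a consistency check, one can equally recover $x_n$ from $z_n$ and $z_{n-1}$ by the same argument, which also confirms that $F(p)$ satisfies the constraint $y_n > z_n$ imposed in Definition~\ref{def: paths}.)

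No step here presents a real obstacle; the only subtlety is ensuring that the fractional shifts by $\pm\epsilon$ actually distinguish the two subcases at $j=n$, which is immediate from $0 < \epsilon < 1/2$. Once all coordinates $x_0,\dots,x_{2n}$ are uniquely determined, $p$ is uniquely determined, proving that $F$ is injective.
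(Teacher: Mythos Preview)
Your proof is correct and essentially the same approach as the paper's, which simply says the injectivity ``follows immediately from the definition of $F$.'' You have spelled out the recovery of each $x_j$ from $F(p)$ explicitly; this is exactly the inverse the paper writes down later in Proposition~\ref{prop: level 0 bijection} (where $x_n = \lfloor (y_n+1)/2\rfloor$, which in fact works uniformly without your case split), so your argument is a faithful expansion of what the authors had in mind.
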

\begin{proof}
    This follows immediately from the definition of $F$.
\end{proof}

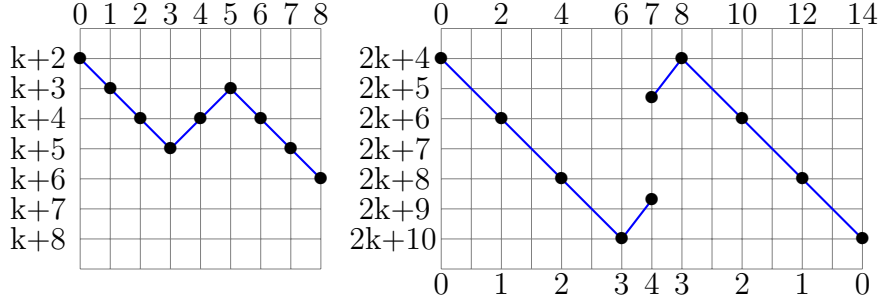
\begin{figure}[htp]
    \centering
\begin{tikzpicture}[scale=0.4]
\draw[step=1cm, gray, very thin] (0, 0) grid (8, 8);
\node at (0, 8.5) {0}; \node at (1, 8.5) {1};
\node at (2, 8.5) {2}; \node at (3, 8.5) {3};
\node at (4, 8.5) {4}; \node at (5, 8.5) {5};
\node at (6, 8.5) {6}; \node at (7, 8.5) {7};
\node at (8, 8.5) {8};

\node at (-1.4, 0) { };
\node at (-1.4, 1) {k+8};
\node at (-1.4, 2) {k+7};
\node at (-1.4, 3) {k+6};
\node at (-1.4, 4) {k+5}; \node at (-1.4, 5) {k+4};
\node at (-1.4, 6) {k+3}; \node at (-1.4, 7) {k+2};
\node at (-1.4, 8) { }; 

\draw[blue, thick] (0,7)-- (1,6)-- (2,5)--(3,4)--(4,5)--(5,6)--(6,5) --(7,4) -- (8,3);

\node at (0,7) {$\bullet$};
\node at (1,6){$\bullet$}; 
\node at (2,5){$\bullet$}; 
\node at (3,4) {$\bullet$}; 
\node at (4,5) {$\bullet$};
\node at (5,6) {$\bullet$}; 
\node at (6,5) {$\bullet$};
\node at (7,4) {$\bullet$};
\node at (8,3) {$\bullet$};

\begin{scope}[xshift=12cm]
\draw[step=1cm, gray, very thin] (0, 0) grid (14, 8);
\node at (0, 8.5) {0}; \node at (2, 8.5) {2};
\node at (4, 8.5) {4}; \node at (6, 8.5) {6};
\node at (7, 8.5) {7}; \node at (8, 8.5) {8};
\node at (10, 8.5) {10}; \node at (12, 8.5) {12};
\node at (14, 8.5) {14};

\node at (0, -0.5) {0}; \node at (2, -0.5) {1};
\node at (4, -0.5) {2}; \node at (6, -0.5) {3};
\node at (7, -0.5) {4}; \node at (8, -0.5) {3};
\node at (10, -0.5) {2}; \node at (12, -0.5) {1};
\node at (14, -0.5) {0};

\node at (-1.55, 0) { };
\node at (-1.6, 1) {2k+10};
\node at (-1.55, 2) {2k+9};
\node at (-1.55, 3) {2k+8};
\node at (-1.55, 4) {2k+7};
\node at (-1.55, 5) {2k+6};
\node at (-1.55, 6) {2k+5}; \node at (-1.55, 7) {2k+4};
\node at (-1.55, 8) { };  
\draw[blue, thick] (0,7)--(6,1)--(7,2.3);
\draw[blue, thick] (7,5.7)--(8,7)--(14,1);
\node at (0,7) {$\bullet$}; 
\node at (2,5) {$\bullet$};
\node at (4,3) {$\bullet$};
\node at (6,1) {$\bullet$}; 
\node at (7,2.3) {$\bullet$};
\node at (7,5.7) {$\bullet$};
\node at (8,7) {$\bullet$};
\node at (10,5) {$\bullet$};
\node at (12,3) {$\bullet$};
\node at (14,1) {$\bullet$}; 
\end{scope}

\end{tikzpicture}
\caption{A path $p \in \mathscr{P}^A_{2,k}$ of type $A_7$ (left), and its image $F(p) \in \mathscr{P}^B_{2,2k}$ of type $B_4$ (right). Horizontal coordinates relabeled by $\tau^{-1}$ are marked at the bottom.}
\label{illustrate of F} 
\end{figure}

\begin{proposition}\label{prop: path map}
Let $p$ be a path in $\mathscr{P}_{i,k}^A$, $i \leq n$. Let $f : X^A \to X^B$ be the map in Definition~\ref{def: map f between sets of points}, and $F(p)$ be the path in $\mathscr{P}_{i,2k}^B$ defined above. Then
\begin{align*}
C^{-}_{F(p)}=&\{f(j,\ell)~|~(j,\ell)\in C^{-}_{p}, j\neq n\}\sqcup \{(n,2\ell-1), (n,2\ell-3)~|~(n,\ell)\in C^{-}_{p}\}\sqcup\\
&\{(n,2\ell-3)~|~(n-1,\ell+1),(n,\ell),(n+1,\ell-1)\in p\},\\
C^{+}_{F(p)}=&\{f(j,\ell)~|~(j,\ell)\in C^{+}_{p}, j \neq n\}\sqcup \{(n,2\ell-1), (n,2\ell+1)~|~(n,\ell)\in C^{+}_{p}\}\sqcup \\
&\{(n,2\ell+1)~|~(n-1,\ell+1),(n,\ell),(n+1,\ell-1)\in p\}.
\end{align*}

\end{proposition}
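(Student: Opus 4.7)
The plan is to prove the proposition by case analysis on the column of each corner of $F(p)$, separating the left branch (columns strictly less than $2n-1$), the special column $2n-1$, and the right branch (columns strictly greater than $2n-1$). Since $F$ acts as coordinate doubling on the left branch and as reflected doubling on the right branch, only the four central points $(2n-2, y_{n-1})$, $(2n-1, y_n)$, $(2n-1, z_n)$, $(2n, z_{n-1})$ need delicate treatment; elsewhere the type-$B_n$ corner condition for $F(p)$ transfers directly from the type-$A_{2n-1}$ corner condition for $p$.

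For the left branch at column $2j$ with $0 < j < n$, the three consecutive heights in $F(p)$ are $(2x_{j-1}, 2x_j, 2x_{j+1})$, except that when $j = n-1$ the right neighbor is $y_n$, which differs from $2x_n$ by $1\pm \epsilon$; this perturbation is small enough to preserve the sign of $y_{n-1}-y_n$, so the local max/min condition defining $C^\pm_{F(p)}$ at column $2j$ is equivalent to the corresponding condition for $p$ at $(j, x_j)$. A parity calculation using $(j,\ell) \in X^A \Leftrightarrow \ell \equiv n+j+1 \pmod 2$ gives $2\ell \equiv 2n+2j+2 \pmod 4$, so $\tau^{-1}(2j, 2\ell) = (j, 2\ell) = f(j,\ell)$, yielding the contribution $\{f(j,\ell) : (j,\ell) \in C_p^\pm, j < n\}$. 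The right branch is handled symmetrically using the second case of $\tau$ together with the formula $z_j = 2x_{2n-j}-2$, producing the contributions with $j > n$.

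The heart of the argument is the analysis at column $2n-1$. I enumerate the four subcases for the signs of $x_n - x_{n-1}$ and $x_n - x_{n+1}$, determine $y_n, z_n$ in each, and identify integers $\ell$ for which exactly one of $(2n-1, \ell \pm \epsilon)$ lies in $F(p)$. When $x_n > x_{n\pm 1}$ (so $(n, x_n) \in C_p^-$), both heights carry a $+\epsilon$ offset and yield the two lower corners $(n, 2x_n-1), (n, 2x_n-3)$; when $x_n < x_{n\pm 1}$ (so $(n, x_n) \in C_p^+$), both carry a $-\epsilon$ offset and yield the two upper corners $(n, 2x_n-1), (n, 2x_n+1)$; when $x_{n-1} < x_n < x_{n+1}$ the two heights are $(2x_n-1) \pm \epsilon$ and the point $(n, 2x_n-1)$ satisfies neither corner condition; and when $x_{n-1} > x_n > x_{n+1}$—precisely the condition appearing in the third sets of the claimed decomposition—both a lower corner $(n, 2x_n-3)$ and an upper corner $(n, 2x_n+1)$ arise. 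Assembling the three cases yields the decomposition in the statement; disjointness of the first set from the others follows from the parity of the second coordinate, and disjointness of the second set from the third follows because the former requires $(n, x_n)$ to be a corner of $p$ while the latter requires a strictly monotone configuration at $j=n$.

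The main obstacle I expect is the bookkeeping at column $2n-1$: carefully tracking the $\pm \epsilon$ signs across all four subcases and confirming that they exhaust the possible origins of corners of $F(p)$ at that column.
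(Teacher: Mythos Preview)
Your proposal is correct and follows essentially the same approach as the paper: both proofs dispose of the columns away from $2n-1$ as direct consequences of the definition of $F$, and then perform the same four-case analysis on the configuration $(x_{n-1},x_n,x_{n+1})$ to determine the corners of $F(p)$ at the special column. Your version is somewhat more thorough—you explicitly verify the parity computation needed to identify $\tau^{-1}(2j,2\ell)$ with $f(j,\ell)$ and argue the disjointness of the three pieces—whereas the paper simply asserts that the $j\neq n$ part ``follows immediately'' and lists the four central subcases.
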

\begin{proof}
    The part with first coordinate $j \neq n$ follows immediately from definition of the map $F$. We now calculate the upper and lower corners of $F(p)$ with first coordinate $n$.

    Let $(n-1,x_{n-1}), (n,\ell), (n+1,x_{n+1}) \in p$ be consecutive points of the path $p$.
    \begin{itemize}
        \item If $x_{n-1} = \ell +1$ and $x_{n+1} = \ell -1$, then we have four consecutive points in $F(p)$:
        \[(2n-2,2\ell+2), (2n-1,2\ell+1-\epsilon),(2n-1,2\ell-3+\epsilon),(2n,2\ell-4).\]
        Thus we are in the case $(n,2\ell +1) \in C_{F(p)}^+$ and $(n,2\ell -3) \in C_{F(p)}^-$.

        \item If $x_{n-1} = \ell -1$ and $x_{n+1} = \ell +1$, then we have four consecutive points in $F(p)$:
        \[(2n-2,2\ell-2), (2n-1,2\ell-1+\epsilon),(2n-1,2\ell-1-\epsilon),(2n,2\ell).\]
        Thus we are in the case that $C_{F(p)}^+$ and $C_{F(p)}^-$ have no point with first coordinate $n$.
        
        \item If $x_{n-1} = x_{n+1} = \ell +1$, then we have four consecutive points in $F(p)$:
        \[(2n-2,2\ell+2), (2n-1,2\ell+1-\epsilon),(2n-1,2\ell-1-\epsilon),(2n,2\ell).\]
        Thus we are in the case $(n,2\ell +1),(n,2\ell -1) \in C_{F(p)}^+$.
        
        \item If $x_{n-1} = x_{n+1} = \ell -1$, then we have four consecutive points in $F(p)$:
        \[(2n-2,2\ell-2), (2n-1,2\ell-1+\epsilon),(2n-1,2\ell-3+\epsilon),(2n,2\ell-4).\]
        Thus we are in the case $(n,2\ell -1),(n,2\ell -3) \in C_{F(p)}^-$.
    \end{itemize}
\end{proof}

Recall that $\varpi$ is the folding map in \eqref{eqn: folding character} and $\Pi$ is the Langlands dual map on the weight lattice in \eqref{eqn: Langlands character dual map}.

\begin{theorem}\label{thm: map between paths preserves the character}
    Let $\mathfrak{m}'$ be the associated weight of a path, as defined in \eqref{eqn: m prime of p}. Then for any path $p \in \mathscr{P}^A_{i,k}$, $i \le n$, we have  
    \[\varpi \circ \mathfrak{m}'(p)=\Pi \circ \mathfrak{m}'(F(p)).\]
\end{theorem}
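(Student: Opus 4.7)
The plan is to compute both $\varpi(\mathfrak{m}'(p))$ and $\Pi(\mathfrak{m}'(F(p)))$ coefficient-by-coefficient in the basis $\{\check{\omega}_{\bar j}\}_{1 \le j \le n}$ and verify the two expressions agree. The entire calculation is driven by Proposition~\ref{prop: path map} together with the explicit descriptions of $\varpi$ and $\Pi$.

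First I would spell out the two side maps on fundamental weights. The map $\varpi$ sends $\omega_j \mapsto \check{\omega}_{\bar j}$ for $j<n$, $\omega_n \mapsto \check{\omega}_{\bar n}$, and $\omega_{2n-j}\mapsto \check{\omega}_{\bar j}$ for $j<n$. The Langlands dual map $\Pi$ sends $\omega_j \mapsto \check{\omega}_{\bar j}$ for $j<n$ (since $\check{d}_{\bar j}=1$) and $2\omega_n \mapsto \check{\omega}_{\bar n}$ (since $\check{d}_{\bar n}=2$); in particular $\Pi$ is only defined on $P'$, so one must also verify that the coefficient of $\omega_n$ in $\mathfrak{m}'(F(p))$ is even, which will drop out of the main calculation. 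Let $a_j$ denote the coefficient of $\omega_j$ in $\mathfrak{m}'(p) \in \bigoplus_{j=1}^{2n-1}\ZZ\omega_j$, so that $\varpi(\mathfrak{m}'(p)) = \sum_{j=1}^{n-1}(a_j+a_{2n-j})\check{\omega}_{\bar j} + a_n\check{\omega}_{\bar n}$.

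Next I would use Proposition~\ref{prop: path map} to compute the coefficient $c_j$ of $\omega_j$ in $\mathfrak{m}'(F(p))$ for each $j \in \{1,\ldots,n\}$. For $j<n$, the map $f$ of Definition~\ref{def: map f between sets of points} gives a bijection between corners of $F(p)$ with first coordinate $j$ and corners of $p$ whose first coordinate is $j$ or $2n-j$, preserving the upper/lower distinction; this immediately yields $c_j = a_j + a_{2n-j}$. The genuinely interesting case is $j=n$: by Proposition~\ref{prop: path map}, each $(n,\ell)\in C^+_p$ produces two upper corners $(n,2\ell\pm 1) \in C^+_{F(p)}$; each $(n,\ell) \in C^-_p$ produces two lower corners $(n,2\ell-1),(n,2\ell-3) \in C^-_{F(p)}$; and each ``pass-through'' configuration $(n-1,\ell+1),(n,\ell),(n+1,\ell-1) \in p$ contributes one upper corner $(n,2\ell+1)$ and one lower corner $(n,2\ell-3)$, which contribute $\omega_n - \omega_n = 0$ to $\mathfrak{m}'(F(p))$. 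Hence $c_n = 2a_n$, which is in particular even.

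Applying $\Pi$ then gives $\Pi(\mathfrak{m}'(F(p))) = \sum_{j=1}^{n-1} c_j\check{\omega}_{\bar j} + \tfrac{c_n}{2}\check{\omega}_{\bar n} = \sum_{j=1}^{n-1}(a_j+a_{2n-j})\check{\omega}_{\bar j} + a_n\check{\omega}_{\bar n}$, which equals $\varpi(\mathfrak{m}'(p))$. The only subtle point—and the main ``obstacle'', such as it is—is the $j=n$ case: one must check that the two pass-through contributions cancel exactly, and that the doubling $c_n = 2a_n$ produced by Proposition~\ref{prop: path map} is precisely what matches the factor $\tfrac{1}{2}$ that $\Pi$ introduces on $\omega_n$. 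Everything else is bookkeeping.
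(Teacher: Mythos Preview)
Your proof is correct and follows essentially the same approach as the paper: both arguments invoke Proposition~\ref{prop: path map} to compare corners of $p$ and $F(p)$ coordinate-by-coordinate, handle $j<n$ via the bijection $f$ and $j=n$ by the doubling/cancellation analysis, and then apply the explicit formulas for $\varpi$ and $\Pi$. Your write-up is slightly more systematic in introducing the coefficients $a_j,c_j$ and in noting that the evenness $c_n=2a_n$ is exactly what is needed for $\Pi$ to be defined, but the substance is identical.
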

\begin{proof}
    The associated weight of a path is given by the positions of upper corners and lower corners. Suppose that the path $p$ has an upper (resp. lower) corner at first coordinate $j$, $j \neq n$, which provides $\omega_j$ (resp. $-\omega_j$) in the weight. By Proposition~\ref{prop: path map}, $F(p)$ has a corresponding upper (resp. lower) corner at first coordinate $\min(j,2n-j)$.

    At the first coordinate $n$, if $(n,\ell)$ is an upper (resp. lower) corner of $p$, then by  Proposition~\ref{prop: path map}, $F(p)$ has double upper (resp. lower) corners at first coordinate $n$, which contribute $2\omega_n$ (resp. $-2\omega_n$) to the weight. If $(n,\ell)$ is neither an upper corner nor a lower corner of $p$, then $F(p)$ has exactly one upper corner and one lower corner at first coordinate $n$, which contribute $0$ to the weight.

    Note that $\varpi(\omega_j) = \varpi(\omega_{2n-j}) = \check{\omega}_{\bar{j}}$, and $\Pi(\omega_j) = \check{\omega}_{\bar{j}}$, $\forall j \neq n$, $\Pi(2\omega_n) = \check{\omega}_{\bar{n}}$, the identity follows immediately.
\end{proof}

\subsection{The map between non-overlapping paths}\label{subsec: map between non-overlapping paths}
Having constructed the map $F$ for a single path, now we construct the map for a tuple of NOP.

For each given shortened snake of type $A_{2n-1}$, we associate a shortened snake of type $B_n$ in the following way.

Let $\big( (i_1,k_1)\cdots(i_T,k_T) \big)$, $T \in \NN^*$, be a shortened snake of type $A_{2n-1}$ such that $1 \leq i_t \leq n$, $\forall 1\leq t\leq T$ (see Section~\ref{subsection:snakeposition}). By Definition~\ref{def: shortened snakes} 
\[\big( (i_1,2k_1)(i_2,2k_2)\cdots(i_T,2k_T) \big)\]
is a shortened snake of type $B_n$.

\begin{proposition}\label{prop: maps between non-overlapping paths}
Let $(p_1,p_2,\dots,p_T)\in\overline{\mathscr{P}}^A_{(i_t,k_t)_{1\leq t\leq T}}$ be a $T$-tuple of NOP. Then the $T$-tuple 
\[
\big(F(p_1),F(p_2),\dots,F(p_T) \big)
\]
is a $T$-tuple of NOP in $\overline{\mathscr{P}}^B_{(i_t,2k_t)_{1 \leq t \leq T}}$. Here $F(p_t) \in \mathscr{P}_{i_t,2k_t}^B$ is the image of $p_t$ under the map $F: \mathscr{P}_{i_t,k_t}^A \to \mathscr{P}_{i_t,2k_t}^B$ in Definition~\ref{def: map F between paths}.
\end{proposition}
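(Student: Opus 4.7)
The plan is to reduce the claim to a pairwise statement and then verify the non-overlapping condition coordinate-by-coordinate on the type-$B$ first coordinates, with the only delicate case being at the central first coordinate $2n-1$.

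First I would record two preliminary facts. (a) The sequence $(i_t, 2k_t)_{1 \le t \le T}$ is genuinely a shortened snake of type $B_n$: the parity condition $k_t \equiv n+i_t+1 \pmod 2$ defining $\mathcal{X}^A$ doubles to $2k_t \equiv 2n+2i_t+2 \pmod 4$, placing $(i_t, 2k_t)$ in $\mathcal{X}^B$; and the snake inequality $k_{t+1}-k_t \ge |i_{t+1}-i_t|+2$ doubles to the type-$B$ snake inequality $2k_{t+1}-2k_t \ge 2|i_{t+1}-i_t|+4$. (b) Each $F(p_t)$ lies in $\mathscr{P}^B_{i_t,2k_t}$ by construction in Definition~\ref{def: map F between paths}. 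Since $\succ$ is transitive, it then suffices to show: for any two type-$A$ paths $p \in \mathscr{P}^A_{i,k}$ and $p' \in \mathscr{P}^A_{i',k'}$ with $(i,k),(i',k')$ in shortened snake position and $p \succ p'$, we have $F(p) \succ F(p')$.

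Write $p = ((j,x_j))_{0 \le j \le 2n}$ and $p' = ((j,x'_j))_{0 \le j \le 2n}$, so $x_j < x'_j$ for every $j$. Both $F(p)$ and $F(p')$ have the same set of first coordinates $\{0,2,\dots,2n-2,\,2n-1,\,2n,\dots,4n-2\}$. At a first coordinate $c = 2j$ with $0 \le j < n$ the two paths carry single vertical coordinates $2x_j$ and $2x'_j$, and similarly at $c = 4n-2-2j$ with $0 \le j < n$ they carry $2x_{2n-j}-2$ and $2x'_{2n-j}-2$; in both families the desired strict inequality follows immediately from $x_j < x'_j$ (resp.\ $x_{2n-j} < x'_{2n-j}$). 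The delicate case is $c = 2n-1$, where each path carries two points: $F(p)$ has vertical values $y_n > z_n$ and $F(p')$ has $y'_n > z'_n$, and strict non-overlapping at $c$ reduces to the single inequality $y_n < z'_n$. By the four possible configurations in Definition~\ref{def: map F between paths} one has the uniform bounds $y_n \le 2x_n+1-\epsilon$ and $z'_n \ge 2x'_n-3+\epsilon$, so $y_n < z'_n$ follows once $x'_n - x_n > 2-\epsilon$.

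The main point, and the only nontrivial ingredient, is a parity observation closing this last gap. Both $(i,k),(i',k') \in \mathcal{X}^A$ give $i+k \equiv n+1 \equiv i'+k' \pmod 2$, and since $x_n$ and $x'_n$ have the same parity as $x_0+n = i+k+n$ and $x'_0+n = i'+k'+n$ respectively, we get $x_n \equiv x'_n \pmod 2$. Together with $x_n < x'_n$ this forces $x'_n - x_n \ge 2$, and since $\epsilon < 1/2$ this is strictly larger than $2-\epsilon$, completing the argument. The obstacle one must be aware of is precisely this parity step: without the constraint defining $\mathcal{X}^A$, the pair $F(p), F(p')$ could fail to be non-overlapping at $2n-1$, since the $\pm \epsilon$ perturbations at the branching first coordinate would allow $y_n$ and $z'_n$ to coincide (or even switch order) whenever $x'_n = x_n + 1$.
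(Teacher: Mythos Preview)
Your proof is correct and follows essentially the same approach as the paper: reduce to consecutive pairs, handle the first coordinates $2j$ and $4n-2-2j$ ($j<n$) directly from the doubling in the definition of $F$, and at the central coordinate $2n-1$ use the parity constraint coming from $\mathcal{X}^A$ to upgrade $x_n < x'_n$ to $x'_n - x_n \ge 2$ and conclude $y_n < z'_n$ via the bounds $y_n \le 2x_n+1-\epsilon$ and $z'_n \ge 2x'_n-3+\epsilon$. The only cosmetic remark is that in the last line you only need $\epsilon>0$, not $\epsilon<1/2$, for $2 > 2-\epsilon$.
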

\begin{proof}
    One only has to check that $F(p_{t-1}) \succ F(p_{t})$, $\forall 2 \leq t \leq T$. 

    Suppose that for $j < n$, $(2j,y_j^{(t-1)}) \in F(p_{t-1})$ and $(2j,y_j^{(t)}) \in F(p_t)$, then $(j,y_j^{(t-1)}/2) \in p_{t-1}$ and $(j,y_j^{(t)}/2) \in p_t$. Since $p_{t-1} \succ p_t$, we have $y_j^{(t-1)} < y_j^{(t)}$.

    Similarly, suppose that for $j < n$, $(4n-2-2j,z_j^{(t-1)}) \in F(p_{t-1})$ and $(4n-2-2j,z_j^{(t)}) \in F(p_t)$, then $(2n-j,z_j^{(t-1)}/2+1) \in p_{t-1}$ and $(j,z_j^{(t)}/2+1) \in p_t$. Since $p_{t-1} \succ p_t$, we have $z_j^{(t-1)} < z_j^{(t)}$.

    At first coordinate $2n-1$, let $(n,x_n^{(t-1)}) \in p_{t-1}$ and $(n,x_n^{(t)}) \in p_{t}$. Then $p_{t-1} \succ p_t$ implies that $x_n^{(t-1)} < x_n^{(t)}$. Thus $x_n^{(t-1)} \le x_n^{(t)}-2$ since $x_n^{(t-1)} \equiv x_n^{(t)} \pmod 2$. Therefore, suppose that $(2n-1,y_n^{(t-1)}),(2n-1,z_n^{(t-1)}) \in F(p_{t-1})$ and $(2n-1,y_n^{(t)}), (2n-1,z_n^{(t)}) \in F(p_t)$, then
    \[z_n^{(t-1)} < y_n^{(t-1)} \le 2x_n^{(t-1)} + 1 -\epsilon \le 2x_n^{(t)} - 4 + 1 -\epsilon < 2x_n^{(t)} - 3 + \epsilon \le z_n^{(t)} < y_n^{(t)}.\]

    Thus we have verified that $F(p_{t-1}) \succ F(p_{t})$, $\forall 2 \leq t \leq T$. 
\end{proof}

As a consequence of Theorem~\ref{thm: map between paths preserves the character} and Proposition~\ref{prop: maps between non-overlapping paths}, we have
\begin{corollary}\label{cor: F preserves characters}
    Let $\overline{p} = (p_1,\dots,p_T) \in \overline{\mathscr{P}}^A_{(i_t,k_t)_{1\leq t\leq T}}$ and $F(\overline{p}) = (F(p_1),\dots,F(p_T)) \in \overline{\mathscr{P}}^B_{(i_t,2k_t)_{1 \leq t \leq T}}$. Then
    \[\varpi \circ \mathfrak{m}'(\overline{p})=\Pi \circ \mathfrak{m}'(F(\overline{p})).\]
\end{corollary}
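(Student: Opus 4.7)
The plan is to observe that this corollary is essentially a direct consequence of the preceding results, requiring only linearity of the two maps involved and additivity of $\mathfrak{m}'$ across the components of a tuple.

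First I would unpack the definitions. By the convention stated just before Section~4, $\mathfrak{m}'(\overline{p}) = \sum_{t=1}^T \mathfrak{m}'(p_t)$ and similarly $\mathfrak{m}'(F(\overline{p})) = \sum_{t=1}^T \mathfrak{m}'(F(p_t))$. Both $\varpi : \mathbb{Z}[\bigoplus_{i} \mathbb{Z}\omega_i] \to \mathbb{Z}[\leftindex^L P]$ (defined in \eqref{eqn: folding character}) and $\Pi : P' \to \leftindex^L P$ (defined in \eqref{eqn: Langlands character dual map}) are $\mathbb{Z}$-linear maps, so they commute with finite sums.

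Next I would invoke Proposition~\ref{prop: maps between non-overlapping paths} to ensure that $F(\overline{p})$ is indeed an element of $\overline{\mathscr{P}}^B_{(i_t,2k_t)_{1 \leq t \leq T}}$, so that $\mathfrak{m}'(F(\overline{p}))$ is well-defined. Then I would apply Theorem~\ref{thm: map between paths preserves the character} termwise: for each $1 \leq t \leq T$, since $p_t \in \mathscr{P}^A_{i_t,k_t}$ with $i_t \leq n$, we have
\[
\varpi \circ \mathfrak{m}'(p_t) = \Pi \circ \mathfrak{m}'(F(p_t)).
\]
Summing over $t$ and pulling $\varpi$ and $\Pi$ outside the sums by linearity yields
\[
\varpi \circ \mathfrak{m}'(\overline{p}) = \varpi\Bigl(\sum_{t=1}^T \mathfrak{m}'(p_t)\Bigr) = \sum_{t=1}^T \varpi \circ \mathfrak{m}'(p_t) = \sum_{t=1}^T \Pi \circ \mathfrak{m}'(F(p_t)) = \Pi \circ \mathfrak{m}'(F(\overline{p})),
\]
which is the desired identity.

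There is no real obstacle here; all the genuine combinatorial work was already carried out in Proposition~\ref{prop: path map} (matching upper and lower corners between $p$ and $F(p)$) and in Theorem~\ref{thm: map between paths preserves the character} (checking that $\varpi(\omega_j) = \varpi(\omega_{2n-j}) = \check{\omega}_{\bar{j}}$ and $\Pi(2\omega_n) = \check{\omega}_{\bar{n}}$ are compatible with the doubling of corners at the $n$-th column). The corollary is therefore best presented as a one-line consequence: apply Theorem~\ref{thm: map between paths preserves the character} to each component $p_t$ and sum.
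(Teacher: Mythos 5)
Your proposal is correct and coincides with the paper's own argument: the corollary is stated there as an immediate consequence of Theorem~\ref{thm: map between paths preserves the character} applied to each component $p_t$, combined with Proposition~\ref{prop: maps between non-overlapping paths} and the additivity of $\mathfrak{m}'$ together with the linearity of $\varpi$ and $\Pi$. Nothing further is needed.
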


\subsection{Langlands dual representations}\label{subsec: Langlands dual from B to A}

Now we can prove that the conjecture of Frenkel and Hernandez \cite[Conjecture~2.2]{frenkel2011langlandsfinite} holds for snake modules from type $B_n^{(1)}$ to type $A_{2n-1}^{(2)}$. Let $\Uqghat$ be the quantum affine algebra of type $B_n^{(1)}$. Let $\Uqghatdual$ be the twisted quantum affine algebra of type $A_{2n-1}^{(2)}$.

\begin{theorem}\label{thm: langlands dual rep A to B}
     Let $V$ be any shortened snake module of $\Uqghat$, whose highest weight is $\lambda \in P$. There is a shortened snake module ${}^L{V}$ of $\Uqghatdual$ whose highest weight is $\Pi(\lambda) \in \leftindex^L{P}$, such that
    \[ \chi^{\sigma}({}^L{V}) \preceq \Pi(\chi(V)).\]
\end{theorem}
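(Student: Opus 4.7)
The plan is to assemble the path-level construction of Section~\ref{sec:Langlands B to A} into the required character inequality. First I would identify the candidate Langlands dual. Let $V = L(m)$ be a shortened snake module of $\Uqghat$ associated to a shortened snake $(i_t,k_t)_{1\leq t\leq T}$ of type $B_n$, with $1 \leq i_t \leq n$. Since every element of $\mathcal{X}^{B}$ has an even second coordinate, we may write $k_t = 2 k_t'$, and the inequalities in Definition~\ref{def: shortened snakes} show that $(i_t, k_t')_{1\leq t\leq T}$ is a shortened snake of type $A_{2n-1}$. I would then take as Langlands dual representation the shortened snake module
\[
{}^L V := L\!\left(\prod_{t=1}^{T} Z_{\bar{i_t},\, q^{k_t'}}\right)
\]
of $\Uqghatdual$ of type $A_{2n-1}^{(2)}$. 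Applying \eqref{eqn: Langlands character dual map} term by term (using $\Pi(\omega_i) = \check{\omega}_{\bar{i}}$ for $i \neq n$ and $\Pi(2\omega_n) = \check{\omega}_{\bar{n}}$) one checks that the highest weight of ${}^L V$ equals $\Pi(\lambda)$, and that $\lambda \in P'$ as required by Remark~\ref{remark: P prime}.

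Next I would translate both sides of the desired inequality into path combinatorics. Set $W := L(\prod_t Y_{i_t, q^{k_t'}})$, the corresponding snake module of type $A_{2n-1}^{(1)}$. Proposition~\ref{prop: folding snake formula} gives $\chi^{\sigma}({}^L V) = \varpi(\chi(W))$, and Corollary~\ref{character-path} supplies
\[
\chi(V) = \sum_{\overline{q}\,\in\, \overline{\mathscr{P}}^{B}_{(i_t,2k_t')_{1\leq t\leq T}}}\bigl[\mathfrak{m}'(\overline{q})\bigr], \qquad
\chi(W) = \sum_{\overline{p}\,\in\, \overline{\mathscr{P}}^{A}_{(i_t,k_t')_{1\leq t\leq T}}}\bigl[\mathfrak{m}'(\overline{p})\bigr].
\]
Applying $\Pi$ and $\varpi$ respectively, the task becomes a containment of $\mathbb{N}$-linear combinations indexed by tuples of non-overlapping paths of the two types.

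Finally I would carry out the comparison via the map $F$. By Proposition~\ref{prop: maps between non-overlapping paths}, the assignment $\overline{p} \mapsto (F(p_1),\dots,F(p_T))$ sends $\overline{\mathscr{P}}^{A}_{(i_t,k_t')}$ into $\overline{\mathscr{P}}^{B}_{(i_t,2k_t')}$, and it is injective by Lemma~\ref{lemma: F injective} applied componentwise. Corollary~\ref{cor: F preserves characters} then yields $\varpi(\mathfrak{m}'(\overline{p})) = \Pi(\mathfrak{m}'(F(\overline{p})))$, so reindexing the sum for $\chi^{\sigma}({}^L V)$ along $F$ gives
\[
\chi^{\sigma}({}^L V) = \sum_{\overline{q}\,\in\, F(\overline{\mathscr{P}}^{A}_{(i_t,k_t')})}\bigl[\Pi(\mathfrak{m}'(\overline{q}))\bigr],
\]
whence $\Pi(\chi(V)) - \chi^{\sigma}({}^L V)$ is a sum of formal symbols $[\Pi(\mathfrak{m}'(\overline{q}))]$ with coefficient $1$, indexed by $\overline{q} \in \overline{\mathscr{P}}^{B}_{(i_t,2k_t')} \setminus F(\overline{\mathscr{P}}^{A}_{(i_t,k_t')})$, and therefore lies in $\mathbb{N}[\leftindex^L{P}]$. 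The hard part of the argument is not the final theorem itself but the preparation behind $F$, namely the corner correspondence (Proposition~\ref{prop: path map}) and the ensuing character identity (Theorem~\ref{thm: map between paths preserves the character}); granted these, the theorem reduces to bookkeeping, with the only subtle point being the factor-of-two matching between shortened snakes of type $B_n$ (whose $k_t$ are necessarily even) and those of type $A_{2n-1}$.
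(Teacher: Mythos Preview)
Your proposal is correct and follows essentially the same argument as the paper's proof: identify ${}^L V$ via the halved snake $(i_t,k_t/2)$, invoke Proposition~\ref{prop: folding snake formula} and Corollary~\ref{character-path} to express both sides as sums over non-overlapping paths, and then use the injectivity of $F$ (Lemma~\ref{lemma: F injective}, Proposition~\ref{prop: maps between non-overlapping paths}) together with Corollary~\ref{cor: F preserves characters} to conclude. Your additional remarks verifying $\lambda\in P'$ and the highest-weight match are correct bookkeeping that the paper leaves implicit.
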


\begin{proof}
    Let $(i_t,k_t)_{1 \leq t \leq T}$ be a shortened snake of type $B_{n}$, and 
        \begin{equation}\label{eqn: snake weight type B}
            m = \prod_{t:i_t \neq n}Y_{i_t,q^{k_t}} \prod_{t: i_t=n}Y_{n,q^{k_t-1}}Y_{n,q^{k_t+1}}
        \end{equation}
    the corresponding dominant monomial.

    Then $(i_t,\frac{k_t}{2})_{1 \leq t \leq T}$ is a shortened snake of type $A_{2n-1}$ such that $1 \leq i_t \leq n$, $\forall 1\leq t\leq T$. Let 
        \begin{equation}\label{eqn: snake weight Lm}
            {}^L{m} = \prod_{1 \leq t \leq T} Z_{\bar{i_t},q^{k_t/2}}
        \end{equation}
    be the corresponding dominant monomial. 

    We prove that if $V = L(m)$, then we can choose ${}^L{V} = L({}^L{m})$.

    By Proposition~\ref{prop: folding snake formula} and Corollary~\ref{character-path},
    \[\chi^{\sigma}({}^L{V}) = \varpi(\chi(L(\prod_{1 \leq t \leq T} Y_{i_t,q^{k_t/2}}))) =  \mathop{\sum}\limits_{\overline{p} \in\overline{\mathscr{P}}^A_{(i_t,\frac{k_t}{2})_{1\leq t\leq T}}}  \bigg[ \varpi \circ \mathfrak{m}'(\overline{p}) \bigg],\]
    and by Corollary~\ref{cor: F preserves characters}, it equals to
    \[\mathop{\sum}\limits_{\overline{p} \in\overline{\mathscr{P}}^A_{(i_t,\frac{k_t}{2})_{1\leq t\leq T}}}  \bigg[ \Pi \circ \mathfrak{m}'(F(\overline{p})) \bigg].\]

    On the other hand, by Corollary~\ref{character-path},
    \[\Pi(\chi(V)) = \mathop{\sum}\limits_{\overline{p} \in\overline{\mathscr{P}}^B_{(i_t,k_t)_{1\leq t\leq T}}}\bigg[\Pi \circ \mathfrak{m}'(\overline{p}) \bigg].\]

    By Lemma~\ref{lemma: F injective},
    \[F: \overline{\mathscr{P}}^A_{(i_t,\frac{k_t}{2})_{1\leq t\leq T}}\to \overline{\mathscr{P}}^B_{(i_t,k_t)_{1\leq t\leq T}} \]
    is injective on every single path, thus injective on $T$-tuples of NOP. Therefore,
    \[ \chi^{\sigma}({}^L{V}) \preceq \Pi(\chi(V)).\]
\end{proof}

It is interesting to know the image of the map $F$. For this purpose, we introduce the following notion.

\begin{definition}\label{def: level}
    Recall that a path $p$ of type $B_n$ has two points $(2n-1,y_n)$, $(2n-1,z_n)$ with first coordinate $2n-1$, and $y_n > z_n$. Define the \textit{gap} of the path $p$ to be
    \[\mathrm{gap}(p) := \frac{1}{2} (\lfloor \frac{y_n+1}{2} \rfloor -  \lfloor \frac{z_n+3}{2} \rfloor).\]

    The gap of a $T$-tuple of NOP is the sum of the gap of each component:
    \[\mathrm{gap}\big((p_1,\dots,p_T)\big) = \sum_{t=1}^T \mathrm{gap}(p_t).\]
\end{definition}

\begin{figure}[htp]
    \centering
\begin{tikzpicture}[scale=0.4]
\draw[step=1cm, gray, very thin] (0, 0) grid (14, 11);
\node at (0, 11.5) {0}; \node at (2, 11.5) {2};
\node at (4, 11.5) {4}; \node at (6, 11.5) {6};
\node at (7, 11.5) {7}; \node at (8, 11.5) {8};
\node at (10, 11.5) {10}; \node at (12, 11.5) {12};
\node at (14, 11.5) {14};

\node at (0, -0.5) {0}; \node at (2, -0.5) {1};
\node at (4, -0.5) {2}; \node at (6, -0.5) {3};
\node at (7, -0.5) {4}; \node at (8, -0.5) {3};
\node at (10, -0.5) {2}; \node at (12, -0.5) {1};
\node at (14, -0.5) {0};

\node at (-1.5, 0) { }; \node at (-1.5, 1) {k+9};
\node at (-1.5, 2) {k+8}; \node at (-1.5, 3) {k+7};
\node at (-1.5, 5) {k+5};\node at (-1.5, 4) {k+6};
\node at (-1.5, 7) {k+3};\node at (-1.5, 6) {k+4};
\node at (-1.5, 9) {k+1};\node at (-1.5, 8) {k+2};
\node at (-1.5, 10) {k};\node at (-1.5, 11) { };

\draw[blue, thick] (0,5) -- (2,7)-- (4,9)--(6,7)--(7,8.3);
\draw[blue, thick] (7,10.3)--(8,9) --(10,7)--(12,9) -- (14,7);

\draw[blue, thick] (0,1) --(2,3)-- (4,5)--(6,3)--(7,1.7);
\draw[blue, thick] (7,6.3)--(8,5) --(10,3)--(12,5) -- (14,3);

\node at (0,5) {$\bullet$}; \node at (2,7){$\bullet$}; 
\node at (4,9){$\bullet$}; \node at (6,7) {$\bullet$}; \node at (7,8.3) {$\bullet$};\node at (7,10.3) {$\bullet$}; \node at (8,9) {$\bullet$}; \node at (10,7) {$\bullet$};
\node at (12,9) {$\bullet$}; \node at (14,7) {$\bullet$};

\node at (0,1) {$\bullet$}; \node at (2,3){$\bullet$}; 
\node at (4,5){$\bullet$}; \node at (6,3) {$\bullet$}; 
\node at (7,1.7) {$\bullet$};\node at (7,6.3){$\bullet$}; 
\node at (8,5) {$\bullet$}; \node at (10,3){$\bullet$};
\node at (12,5) {$\bullet$}; \node at (14,3) {$\bullet$};

\begin{scope}[xshift=19cm]
\draw[step=1cm, gray, very thin] (0, 0) grid (14, 11);
\node at (0, 11.5) {0}; \node at (2, 11.5) {2};
\node at (4, 11.5) {4}; \node at (6, 11.5) {6};
\node at (7, 11.5) {7}; \node at (8, 11.5) {8};
\node at (10, 11.5) {10}; \node at (12, 11.5) {12};
\node at (14, 11.5) {14};

\node at (0, -0.5) {0}; \node at (2, -0.5) {1};
\node at (4, -0.5) {2}; \node at (6, -0.5) {3};
\node at (7, -0.5) {4}; \node at (8, -0.5) {3};
\node at (10, -0.5) {2}; \node at (12, -0.5) {1};
\node at (14, -0.5) {0};

\node at (-1.5, 0) { }; \node at (-1.5, 1) {k+9};
\node at (-1.5, 2) {k+8}; \node at (-1.5, 3) {k+7};
\node at (-1.5, 5) {k+5};\node at (-1.5, 4) {k+6};
\node at (-1.5, 7) {k+3};\node at (-1.5, 6) {k+4};
\node at (-1.5, 9) {k+1};\node at (-1.5, 8) {k+2};
\node at (-1.5, 10) {k};\node at (-1.5, 11) { };

\draw[blue, thick] (0,5) -- (2,7)-- (4,9)--(6,7)--(7,5.7);
\draw[blue, thick] (7,10.3)--(8,9) --(10,7)--(12,9) -- (14,7);

\draw[blue, thick] (0,1) --(2,3)-- (4,5)--(6,3)--(7,1.7);
\draw[blue, thick] (7,2.3)--(8,1) --(10,3)--(12,5) -- (14,3);

\node at (0,5) {$\bullet$}; \node at (2,7){$\bullet$}; 
\node at (4,9){$\bullet$}; \node at (6,7) {$\bullet$}; \node at (7,5.7) {$\bullet$};\node at (7,10.3) {$\bullet$}; \node at (8,9) {$\bullet$}; \node at (10,7) {$\bullet$};
\node at (12,9) {$\bullet$}; \node at (14,7) {$\bullet$};

\node at (0,1) {$\bullet$}; \node at (2,3){$\bullet$}; 
\node at (4,5){$\bullet$}; \node at (6,3) {$\bullet$}; 
\node at (7,1.7) {$\bullet$}; \node at (7,2.3) {$\bullet$}; 
\node at (8,1) {$\bullet$}; \node at (10,3){$\bullet$};
\node at (12,5) {$\bullet$}; \node at (14,3) {$\bullet$};

\end{scope}

\end{tikzpicture}
\caption{Two examples of $2$-tuple of NOP of gap $1$.}
\label{illustrate of gap} 
\end{figure}
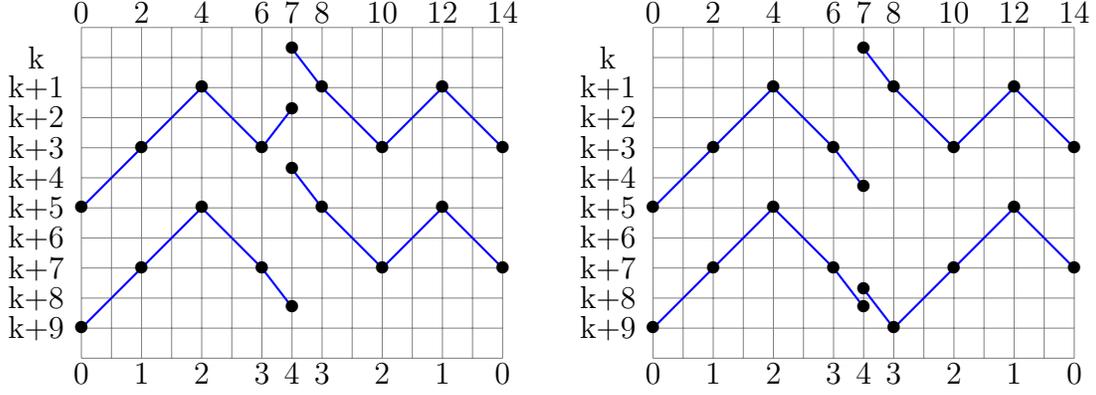

\begin{remark}
    The gap is a non-negative integer. This is because one can prove that 
    $$\frac{1}{2} (\lfloor \frac{y_n+1}{2} \rfloor -  \lfloor \frac{z_n+3}{2} \rfloor) = \lfloor \frac{y_n-z_n}{4} \rfloor,$$
    using the properties $y_n > z_n$ and
    \[y_n \in (4\ZZ +3-\epsilon) \sqcup (4\ZZ+1+\epsilon),\]
    \[z_n \in (4\ZZ +3+\epsilon) \sqcup (4\ZZ+1-\epsilon).\]
    We omit the calculation.
\end{remark}

\begin{lemma}
    A $T$-tuple of NOP in the image of $F$ has gap $0$.
\end{lemma}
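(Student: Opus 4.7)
The plan is to reduce to the single-path case using additivity and then do a short case analysis at the unique bend at $x=2n-1$.

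By Definition~\ref{def: level}, $\mathrm{gap}$ of a $T$-tuple is the sum of the gaps of its components, so it suffices to show that $\mathrm{gap}(F(p)) = 0$ for any $p \in \mathscr{P}_{i,k}^A$ with $i \le n$. Using the identity
\[
\frac{1}{2}\Bigl(\Bigl\lfloor \tfrac{y_n+1}{2}\Bigr\rfloor - \Bigl\lfloor \tfrac{z_n+3}{2}\Bigr\rfloor\Bigr) = \Bigl\lfloor \tfrac{y_n-z_n}{4}\Bigr\rfloor
\]
recorded in the remark preceding the statement, it further suffices to prove that $y_n - z_n < 4$, where $(2n-1,y_n), (2n-1,z_n) \in F(p)$ are the two middle points.

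Write $p = ((0,x_0),(1,x_1),\dots,(2n,x_{2n}))$. The values $y_n$ and $z_n$ depend only on the relative position of $x_n$ with respect to its two neighbors $x_{n-1}$ and $x_{n+1}$, each of which differs from $x_n$ by $\pm 1$. Splitting into the four possible cases and reading off $y_n$ and $z_n$ directly from Definition~\ref{def: map F between paths} yields
\begin{align*}
x_{n-1}=x_n-1,\ x_{n+1}=x_n-1 &:\quad y_n-z_n = (2x_n-1+\epsilon)-(2x_n-3+\epsilon) = 2,\\
x_{n-1}=x_n-1,\ x_{n+1}=x_n+1 &:\quad y_n-z_n = (2x_n-1+\epsilon)-(2x_n-1-\epsilon) = 2\epsilon,\\
x_{n-1}=x_n+1,\ x_{n+1}=x_n-1 &:\quad y_n-z_n = (2x_n+1-\epsilon)-(2x_n-3+\epsilon) = 4-2\epsilon,\\
x_{n-1}=x_n+1,\ x_{n+1}=x_n+1 &:\quad y_n-z_n = (2x_n+1-\epsilon)-(2x_n-1-\epsilon) = 2.
\end{align*}
Since $0 < \epsilon < 1/2$, in every case $0 < y_n - z_n \le 4-2\epsilon < 4$, so $\lfloor (y_n-z_n)/4 \rfloor = 0$, which gives $\mathrm{gap}(F(p))=0$. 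Summing over $t$ completes the proof.

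There is no real obstacle here: the construction $F$ was designed precisely so that $F(p)$ never produces a ``deep'' $\wedge$/$\vee$ at $x=2n-1$, and the only minor subtlety is being careful about the signs of the $\pm\epsilon$ shifts, which the four cases above handle.
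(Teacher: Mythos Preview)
Your proof is correct and follows essentially the same approach as the paper: reduce to a single path by additivity, then check directly from Definition~\ref{def: map F between paths} that the two middle points $(2n-1,y_n),(2n-1,z_n)$ of $F(p)$ are close enough to force gap $0$. The only stylistic difference is that the paper bounds $y_n\le 2x_n+1-\epsilon$ and $z_n\ge 2x_n-3+\epsilon$ and plugs these extremes straight into the floor expression, whereas you route through the formula $\lfloor(y_n-z_n)/4\rfloor$ and enumerate the four cases explicitly; the content is the same.
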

\begin{proof}
    This follows from Definition~\ref{def: map F between paths}. Note that for a type $A_{2n-1}$ path $p$, if $(n,x_n) \in p$ and $(2n-1,y_n),(2n-1,z_n) \in F(p)$, then
    \[0 \le \frac{1}{2} (\lfloor \frac{y_n+1}{2} \rfloor -  \lfloor \frac{z_n+3}{2} \rfloor) \le \frac{1}{2}(\lfloor \frac{2x_n+1-\epsilon+1}{2} \rfloor -  \lfloor \frac{2x_n-3+\epsilon+3}{2} \rfloor) = 0.\]
    
\end{proof}

\begin{proposition}\label{prop: level 0 bijection}
    Let $(p_1,\dots,p_T) \in \overline{\mathscr{P}}^B_{(i_t,k_t)_{1\leq t \leq T}}$ be a $T$-tuple of NOP. Then $(p_1,\dots,p_T)$ is in the image $F(\overline{\mathscr{P}}^A_{(i_t,\frac{k_t}{2})_{1\leq t \leq T}})$ if and only if it has gap $0$. 
\end{proposition}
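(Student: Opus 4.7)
The ``only if'' direction was established by the preceding lemma, so I focus on the converse: every gap-zero NOP tuple lies in the image of $F$. The plan is to construct an explicit inverse $G$ to $F$ on single gap-zero paths and then show it extends to NOP tuples.

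For a single gap-zero path $p \in \mathscr{P}^B_{i,2k}$ with coordinates $(y_0,\dots,y_{n-1},y_n,z_n,z_{n-1},\dots,z_0)$, I would define $G(p) = ((0,x_0),(1,x_1),\dots,(2n,x_{2n}))$ by
\[
x_j = y_j/2 \text{ for } 0 \le j \le n-1, \quad x_{2n-j} = z_j/2 + 1 \text{ for } 0 \le j \le n-1, \quad x_n = \lfloor (y_n+1)/2 \rfloor.
\]
The gap-zero hypothesis is exactly the identity $\lfloor (y_n+1)/2\rfloor = \lfloor (z_n+3)/2\rfloor$, so $x_n$ may equally well be computed from the right branch. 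The endpoint conditions $x_0 = i+k$, $x_{2n} = 2n-i+k$ and the step conditions $|x_{r+1}-x_r|=1$ for $r \notin \{n-1,n\}$ follow immediately from the corresponding type-$B$ path conditions. The step conditions $|x_n - x_{n\pm 1}|=1$ follow by a short calculation: writing $y_n = y_{n-1}\pm(1+\epsilon)$ and evaluating $\lfloor (y_n+1)/2\rfloor$ yields $x_n = x_{n-1}\pm 1$, and the analogous computation with $z_n$ yields $x_n = x_{n+1}\pm 1$. The identity $F\circ G = \mathrm{id}$ on gap-zero paths is then a direct case check against Definition~\ref{def: map F between paths}, matching each of the four subcases for the relative positions of $x_n$ with $x_{n\pm 1}$.

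For an NOP tuple $(p_1,\dots,p_T)$ of total gap zero, the inequality $\mathrm{gap}(p_t) = \lfloor (y_n^{(t)}-z_n^{(t)})/4\rfloor \ge 0$ from the preceding remark forces each $p_t$ to be gap-zero, so $G$ applies componentwise. For first coordinates $j\neq n$, the inequality $x_j^{(t-1)} < x_j^{(t)}$ is inherited directly from the non-overlap of $(p_{t-1},p_t)$ at the matched type-$B$ coordinate. At $j=n$, the non-overlap property forces $y_n^{(t-1)} < z_n^{(t)}$; using the congruences $y_n \in (4\ZZ+1+\epsilon)\cup(4\ZZ+3-\epsilon)$ and $z_n \in (4\ZZ+1-\epsilon)\cup(4\ZZ+3+\epsilon)$ from the same remark, a finite case check gives $\lfloor (y_n^{(t-1)}+1)/2\rfloor < \lfloor (z_n^{(t)}+3)/2\rfloor$, i.e., $x_n^{(t-1)} < x_n^{(t)}$. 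This proves $(G(p_1),\dots,G(p_T)) \in \overline{\mathscr{P}}^A_{(i_t,k_t/2)_{1\le t\le T}}$, and by construction $F$ carries it back to $(p_1,\dots,p_T)$.

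The main obstacle is the case analysis at the first coordinate $j=n$, both for the single-path step conditions and for the NOP preservation, since these are sensitive to the residues modulo $4$ of the integer parts of $y_n$ and $z_n$ together with the signs of the $\pm\epsilon$ contribution. All other verifications reduce to arithmetic on the left and right branches separately, treated in parallel with Definition~\ref{def: map F between paths} and Proposition~\ref{prop: maps between non-overlapping paths}.
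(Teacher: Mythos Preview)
Your proposal is correct and follows essentially the same approach as the paper: you define the same explicit inverse formula on single gap-zero paths, verify it lands in $\mathscr{P}^A_{i,k/2}$, and then check NOP preservation coordinatewise. The only minor difference is at the middle coordinate $j=n$: the paper observes that three strictly ordered elements $y_n^{(t-1)}<z_n^{(t)}<y_n^{(t)}$ of $2\ZZ+1\pm\epsilon$ must span at least $2$, giving $\lfloor(y_n^{(t-1)}+1)/2\rfloor<\lfloor(y_n^{(t)}+1)/2\rfloor$ directly, whereas you route through $\lfloor(z_n^{(t)}+3)/2\rfloor$ via the gap-zero identity and a case analysis on residues mod $4$; both arguments are valid and yield the same conclusion.
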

\begin{proof}
    It remains to prove that for a single path $p \in \mathscr{P}^B_{i,k}$, if $p$ has gap $0$, then $p$ is in the image $F(\mathscr{P}^A_{i,\frac{k}{2}})$.

    In fact, we are able to write down its preimage explicitly. Let
        \begin{align*}
           && p = \big( (0,y_0), &(2,y_1), \dots, (2n-4, y_{n-2}), (2n-2,y_{n-1}), (2n-1,y_n), \\
           && &(2n-1,z_n), (2n,z_{n-1}),\dots,(4n-4,z_1),(4n-2,z_0) \big).
        \end{align*}
    Then its preimage is the path 
        \begin{align*}
           && F^{-1}(p) = \big( (0,\frac{y_0}{2}), &(1,\frac{y_1}{2}), \dots, (n-1, \frac{y_{n-1}}{2}), (n,\lfloor \frac{y_{n}+1}{2} \rfloor ), \\
           && &(n+1,\frac{z_{n-1}+2}{2}),\dots,(2n-1,\frac{z_1+2}{2}),(2n,\frac{z_0+2}{2}) \big).
        \end{align*}
    It is straightforward to check that $F^{-1}(p)$ is a path in $\mathscr{P}^A_{i,\frac{k}{2}}$. One only has to use that $y_i,z_i$ are even integers when $i \neq n$, and $y_n \in 2\ZZ +1 \pm \epsilon$.

    Moreover, for a $T$-tuple of NOP $(p_1,\dots,p_T) \in \overline{\mathscr{P}}^B_{(i_t,k_t)_{1\leq t \leq T}}$, the corresponding paths $F^{-1}(p_1),\dots,F^{-1}(p_T)$ are also non-overlapping. In fact, for points whose first coordinate is not $2n-1$, the inequality is evident. For points with first coordinate $2n-1$, if $p_{t-1}$ is strictly above $p_{t}$ and $(2n-1,y_n^{(t-1)}),(2n-1,z_n^{(t-1)}) \in p_{t-1}$ (resp. $(2n-1,y_n^{(t)}),(2n-1,z_n^{(t)}) \in p_{t}$) are points of $p_{t-1}$ (resp. $p_{t}$), then $z_n^{(t-1)} < y_n^{(t-1)} < z_n^{(t)} < y_n^{(t)}$. Since $y_n^{(t-1)},z_n^{(t)},y_n^{(t)} \in 2\ZZ + 1 \pm \epsilon$, we have $y_n^{(t)} - y_n^{(t-1)} \geq 2$, thus $\lfloor \frac{y_{n}^{(t-1)}+1}{2} \rfloor < \lfloor \frac{y_{n}^{(t)}+1}{2} \rfloor$.
\end{proof}

Since the path description gives not only the usual character but also the $q$-character, we have constructed the embedding at the level of $q$-characters. In \cite{frenkel2011langlandsfinite}, Frenkel and Hernandez provided an algorithm to compute which term in $\chi_q(V)$ corresponds to a term in $\chi_q^{\sigma}({}^L{V})$ when $V$ is a Kirillov-Reshetikhin module. Their method is known as the interpolating $(q,t)$-characters. In this paper, we propose an alternative method for this question, and more generally for snake modules.

The gap of a monomial in $\chi_q(V)$ is defined as the gap of its corresponding path. Then we have the following method to write down the $q$-character of $L({}^L{m})$.

\begin{corollary}
    Let $L(m)$ be a shortened snake module of $\Uqghat$ with $m$ in \eqref{eqn: snake weight type B}. Let $C_0$ be the set of $T$-tuples of NOP in $\overline{\mathscr{P}}^B_{(i_t,k_t)_{1\leq t \leq T}}$ having gap $0$. Then
    \[\chi_q^{\sigma}(L({}^L{m})) = \sum_{\overline{p} \in C_0} \pi \circ \mathfrak{m}(F^{-1}(\overline{p})),\]
    where $\mathfrak{m}$ is defined in \eqref{eqn: monomial m of non-overlapping paths}, and $\pi$ is the folding map on $q$-characters defined in \eqref{eqn: folding pi}.
\end{corollary}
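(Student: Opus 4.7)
The plan is to combine three ingredients that have already been set up in the paper: the folding identity for $q$-characters of snake modules, the Mukhin--Young path description, and the gap-$0$ bijection of Proposition~\ref{prop: level 0 bijection}. The strategy is to compute $\chi_q^{\sigma}(L({}^L{m}))$ by first folding back to a type $A_{2n-1}^{(1)}$ snake module, then expanding its $q$-character via paths, and finally transporting the sum along $F$.

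More precisely, observe that since all indices $i_t$ in the shortened snake satisfy $1 \le i_t \le n$, the definition of the folding map $\pi$ in \eqref{eqn: folding pi} gives
\[
\pi\Bigl(\prod_{1 \le t \le T} Y_{i_t,q^{k_t/2}}\Bigr) \;=\; \prod_{1 \le t \le T} Z_{\bar{i_t},q^{k_t/2}} \;=\; {}^L{m}.
\]
The first step is to apply the folding identity $\pi(\chi_q(L(m'))) = \chi_q^{\sigma}(L(\pi(m')))$, valid for snake modules of type $A_{2n-1}$ by the proposition immediately following Conjecture~\ref{conj: folding}, to the snake module $L(\prod_t Y_{i_t,q^{k_t/2}})$; this yields
\[
\chi_q^{\sigma}(L({}^L{m})) \;=\; \pi\!\left(\chi_q\bigl(L(\textstyle\prod_t Y_{i_t,q^{k_t/2}})\bigr)\right).
\]
The second step is to apply the path description (Theorem~\ref{path description}) to the type $A_{2n-1}^{(1)}$ snake module $L(\prod_t Y_{i_t,q^{k_t/2}})$, which is associated to the shortened snake $(i_t,k_t/2)_{1 \le t \le T}$, to obtain
\[
\chi_q\bigl(L(\textstyle\prod_t Y_{i_t,q^{k_t/2}})\bigr) \;=\; \sum_{\overline{p} \in \overline{\mathscr{P}}^A_{(i_t,k_t/2)_{1 \le t \le T}}} \mathfrak{m}(\overline{p}).
\]
Applying $\pi$ termwise then gives $\chi_q^{\sigma}(L({}^L{m})) = \sum_{\overline{p} \in \overline{\mathscr{P}}^A_{(i_t,k_t/2)}} \pi \circ \mathfrak{m}(\overline{p})$.

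The third step is to reindex this sum using $F$. By Lemma~\ref{lemma: F injective} together with Proposition~\ref{prop: maps between non-overlapping paths}, the map $\overline{p} \mapsto F(\overline{p})$ is an injection from $\overline{\mathscr{P}}^A_{(i_t,k_t/2)_{1 \le t \le T}}$ into $\overline{\mathscr{P}}^B_{(i_t,k_t)_{1 \le t \le T}}$, and by Proposition~\ref{prop: level 0 bijection} its image is precisely the subset $C_0$ of tuples of gap $0$. Substituting $\overline{p} = F^{-1}(\overline{q})$ with $\overline{q}$ running over $C_0$ produces the claimed formula
\[
\chi_q^{\sigma}(L({}^L{m})) \;=\; \sum_{\overline{p} \in C_0} \pi \circ \mathfrak{m}(F^{-1}(\overline{p})).
\]

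Since every non-trivial statement here (the snake-module folding identity, Mukhin--Young's path description, and the characterization of the image of $F$ as the gap-$0$ tuples) is already established earlier in the paper, the only real verification is that the bijection between single paths extends to a bijection of $T$-tuples of non-overlapping paths, but this is exactly the content of Proposition~\ref{prop: level 0 bijection} (whose proof also checks that the preimages $F^{-1}(p_t)$ assemble into a non-overlapping tuple). Thus the main obstacle has already been dispatched, and the corollary is essentially a direct combination of the three cited results; no further computation beyond the substitution $\overline{p} \mapsto F^{-1}(\overline{p})$ is required.
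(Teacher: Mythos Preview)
Your proof is correct and follows precisely the approach the paper intends: the corollary is stated there without an explicit proof, as a direct consequence of the folding identity for snake modules, the path description, and the gap-$0$ bijection of Proposition~\ref{prop: level 0 bijection}, and you have filled in exactly these three steps in the right order.
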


\section{An identity of characters of type \texorpdfstring{$A_{n-1}^{(1)}$}{} snake modules}\label{sec:equality}
We prove an identity between characters of snake modules of type $A_{n-1}^{(1)}$. This identity will play a key role in the next section, where we establish the Langlands branching rule. In Section~\ref{subsec: path interpretation of identity}, we provide an interpretation of this identity in the language of paths.

\subsection{Main theorem}\label{subsec: main theorem}
In this section, we present a snake module in terms of multi-segments for simplicity.

A multi-segment is a finite family of segments $[l_j,r_j]$, such that $l_j, r_j \in \ZZ$, $l_j \le r_j$, for all $j$ in a finite index set.
    
A \textit{type $A_{n-1}$ snake} is a multi-segment such that the segments are ordered as $[l_t,r_t]$, $1 \leq t \leq T$, $T \in \NN^*$, which satisfies
\[l_1 < l_2 < \cdots < l_T, r_1 < r_2< \cdots < r_T,~\text{and}~0 \le r_t - l_t \le n, \forall t.\]
The number $T$ is called the length of the snake.

Recall that in type $A_{n-1}$, there is a correspondence between multi-segments and dominant monomials in $\ZZ[Y^{\pm1}_{i,q^{i+2m}}]_{i \in I, m \in \ZZ}$ \cite{chari1996quantum}, which associates an individual segment $[l,r]$ with $Y_{r-l,q^{r+l}}$, and associates the multi-segment with the product $\prod_{t=1}^T Y_{r_t-l_t,q^{r_t+l_t}}$. Here the variable $Y_{r-l,q^{r+l}}$ should be understood as $1$ if $r-l = 0$ or $n$.

Under this correspondence, it is easy to see that $[\mathbf{l},\mathbf{r}] = ([l_t,r_t])_{1 \le t \le T}$ is a type $A_{n-1}$ snake if and only if the sequence of points $(r_t-l_t,r_t+l_t) \in \{0,\dots,n\} \times \ZZ$, $1 \leq t \leq T$, forms a shortened snake in the sense of Definition~\ref{def: definition of snakes}.

The corresponding snake module is denoted by $L([\mathbf{l},\mathbf{r}])$.

\begin{definition}
    Given two sequences $\mathbf{l} = (l_1,\dots,l_T)$ and $\mathbf{l}' = (l'_1,\dots,l'_T)$, 
    \begin{itemize}
        \item 
        we say that 
        $$\mathbf{l} \leq \mathbf{l}', \text{ if } l_t \leq l_t', \forall t.$$

        \item In case that $\mathbf{l} \le \mathbf{l}'$, we denote by
        \[|\mathbf{l}'-\mathbf{l}| = \sum_{t=1}^T (l_t' - l_t) \in \NN.\]
    
        \item We say that 
        $$\mathbf{l}[-1] < \mathbf{l}', \text{ if } l_{t-1} < l'_t, \forall t,$$
        where $l_0 = -\infty$.

        Similarly, we say that 
        $$\mathbf{l}' < \mathbf{l}[1], \text{ if } l'_{t} < l_{t+1}, \forall t,$$
        where $l_{T+1} = \infty$.
    \end{itemize}
\end{definition}

This section is devoted to proving the following theorem.

\begin{theorem}\label{thm: type A equality}
Let $[\mathbf{l},\mathbf{r}]$ be a type $A_{n-1}$ snake of length $T$. For each fixed integer $M \in \NN$, we have
    \begin{equation}\label{eqn: type A equation}
    \sum_{\substack{\mathbf{l} \le \mathbf{l}' <  \mathbf{l}[1]\\ |\mathbf{l}'-\mathbf{l}| = M\\ [\mathbf{l}',\mathbf{r}]~\text{is a snake}}} \chi(L(\mathbf{l}',\mathbf{r})) = \sum_{\substack{\mathbf{r}[-1] < \mathbf{r}' \le \mathbf{r} \\|\mathbf{r}-\mathbf{r}'| = M\\ [\mathbf{l},\mathbf{r}']~\text{is a snake}}} \chi(L(\mathbf{l},\mathbf{r}')),
    \end{equation}
    where the sum on the left-hand side is taken over $\mathbf{l}'$ such that $[\mathbf{l'},\mathbf{r}]$ is a type $A_{n-1}$ snake and $\mathbf{l} \le \mathbf{l}' <  \mathbf{l}[1]$, $|\mathbf{l}'-\mathbf{l}| = M$. Similarly, the sum on the right-hand side is taken over $\mathbf{r}'$ such that $[\mathbf{l},\mathbf{r}']$ is a type $A_{n-1}$ snake and $\mathbf{r}[-1] < \mathbf{r}' \le  \mathbf{r}$, $|\mathbf{r} - \mathbf{r}'| =M$. 
\end{theorem}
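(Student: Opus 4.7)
The plan is to apply the determinant formula for the character of a type $A_{n-1}$ snake module \cite{tadic1995on, chenevier2008characters, lapid2014on, lapid2018geometric, bittmann2023on, brito2024alternating} and to express both sides of \eqref{eqn: type A equation} as coefficients of a common generating determinant. Throughout, let $F_k$ denote the character of the fundamental KR module associated to a segment of length $k+1$; since $F_k$ depends only on $k$, the formula reads
\[
\chi\bigl(L([\mathbf{l}, \mathbf{r}])\bigr) = \det\bigl(F_{r_j - l_i}\bigr)_{1 \le i, j \le T},
\]
with the convention $F_k = 0$ for $k < 0$. Let $A(z) := \sum_{M \ge 0} z^M\, \mathrm{LHS}(M)$ and $B(z) := \sum_{M \ge 0} z^M\, \mathrm{RHS}(M)$, where $\mathrm{LHS}(M)$ and $\mathrm{RHS}(M)$ are the two sides of \eqref{eqn: type A equation}. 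It is enough to prove the polynomial identity $A(z) = B(z)$.

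The crucial first step is to enlarge the range of summation defining $A(z)$ to all $\mathbf{l}'$ with $l_i \le l'_i < l_{i+1}$ (set $l_{T+1} := +\infty$), dropping the segment validity condition $l'_i \le r_i$, and to show that each ``invalid'' shift contributes $0$ to $\det\bigl(F_{r_j - l'_i}\bigr)$. Indeed, if $l'_i > r_i$ for some $i$, then strict monotonicity $l'_1 < \cdots < l'_T$ combined with $r_1 < \cdots < r_T$ yields $l'_k \ge l'_i > r_i \ge r_j$ for all $k \ge i$ and $j \le i$. Therefore the $(T-i+1) \times i$ submatrix of $\bigl(F_{r_j - l'_k}\bigr)$ indexed by rows $\{i,\ldots,T\}$ and columns $\{1,\ldots,i\}$ vanishes identically; since $(T-i+1)+i > T$, the standard zero-block vanishing of determinants forces the whole determinant to be $0$. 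The symmetric argument allows enlarging the range of summation defining $B(z)$ from $\max(l_j, r_{j-1}+1) \le r'_j \le r_j$ to $r_{j-1} < r'_j \le r_j$.

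By multilinearity of the determinant in rows (resp.\ columns),
\[
A(z) = \det\bigl(A_{ij}(z)\bigr), \qquad B(z) = \det\bigl(B_{ij}(z)\bigr),
\]
where $A_{ij}(z) := \sum_{u=0}^{l_{i+1}-l_i-1} z^u F_{r_j - l_i - u}$ and $B_{ij}(z) := \sum_{v=0}^{r_j - r_{j-1}-1} z^v F_{r_j - l_i - v}$ (with $r_0 := -\infty$). Set $S_{ij}(z) := \sum_{u \ge 0} z^u F_{r_j - l_i - u}$. A direct telescoping gives
\[
A_{ij}(z) = S_{ij}(z) - z^{l_{i+1}-l_i}\,S_{i+1,j}(z) \quad (i<T), \qquad A_{Tj}(z) = S_{Tj}(z),
\]
and analogously $B_{ij}(z) = S_{ij}(z) - z^{r_j - r_{j-1}}\,S_{i,j-1}(z)$ for $j>1$, with $B_{i1} = S_{i1}$. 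These identities exhibit $A(z) = \det\bigl((I - L)\,S(z)\bigr)$ and $B(z) = \det\bigl(S(z)\,(I - U)\bigr)$ for unit upper-triangular matrices $I-L$ and $I-U$, whence $A(z) = \det S(z) = B(z)$. Extracting the coefficient of $z^M$ yields \eqref{eqn: type A equation}.

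The principal obstacle is the extension step: one must check carefully that shifts violating segment validity indeed contribute zero determinants. This rests on the zero-block lemma together with the simultaneous exploitation of both monotonicity constraints of the snake. Once this is in place, the remaining identification with $\det S(z)$ is a routine row/column manipulation that hides no further technical surprise.
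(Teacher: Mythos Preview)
Your proof is correct and shares its first move with the paper's: both invoke the determinant formula $\chi(L([\mathbf{l},\mathbf{r}]))=\det(F_{r_j-l_i})$ and both remove the ``is a snake'' condition via the same zero-block argument (if $l'_i>r_i$ then the lower-left $(T-i+1)\times i$ block vanishes). From there the two arguments diverge in presentation. The paper works coefficient-by-coefficient: first it proves the \emph{unconstrained} identity $\sum_{\mathbf{l}'\ge\mathbf{l},\,|\mathbf{l}'-\mathbf{l}|=M}W(\mathbf{l}',\mathbf{r})=\sum_{\mathbf{r}'\le\mathbf{r},\,|\mathbf{r}-\mathbf{r}'|=M}W(\mathbf{l},\mathbf{r}')$ by expanding the determinant as a signed permutation sum and using the entrywise shift-invariance $W(l+a,r)=W(l,r-a)$; then it restores the interval constraints $\mathbf{l}'<\mathbf{l}[1]$ and $\mathbf{r}[-1]<\mathbf{r}'$ by an inclusion--exclusion over sequences $J=(j_1,\dots,j_T)$ with $j_t\in\{t,t+1\}$, killing all non-trivial terms via the vanishing of determinants with equal rows. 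Your generating-function packaging accomplishes both steps at once: multilinearity in rows versus columns of $\det S(z)$ replaces the shift-and-reindex argument, and the unit-triangular factorisations $A(z)=\det\bigl((I-L)S(z)\bigr)$, $B(z)=\det\bigl(S(z)(I-U)\bigr)$ replace the inclusion--exclusion. The underlying mechanism is the same, but your route is more compact and avoids the explicit permutation bookkeeping. One small omission: you should also record $F_k=0$ for $k>n$ (the paper's $W(l,r)$ vanishes outside $0\le r-l\le n$); this does not affect any step of your argument, but it is needed for the determinant formula to match $\chi(L([\mathbf{l},\mathbf{r}]))$ in the first place.
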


\subsection{Determinant formula}
In this subsection, we recall the determinant formula for snake modules, which allows us to express the character of a snake module as an algebraic combination of characters of fundamental representations. We refer the reader to \cite{brito2024alternating} for the notation and results needed here. This formula can also be derived via Schur--Weyl duality from the corresponding result for algebraic groups \cite{tadic1995on, chenevier2008characters, lapid2014on, lapid2018geometric, bittmann2023on}.

We use the notation $W(l,r)$ to denote the character of $W(\bm{\omega}_{l,r})$ in \cite{brito2024alternating}. That is,
\begin{definition}
     For $l,r \in \ZZ$,
\begin{align*}
    W(l,r) = 
    \begin{cases}
    \chi(L([l,r])),&~\text{if}~0 \le r-l \le n, \\
    0,&~\text{otherwise}.
    \end{cases}
\end{align*}
\end{definition}

Notice that $[l,r]$ is a type $A_{n-1}$ snake if and only if $[l+m,r+m]$ is as well, and the usual $\g$-character of a fundamental is independent of the spectral parameter. Therefore, we have

\begin{lemma}\label{lemma: shift invariant}
    $W(l,r) = W(l+m,r+m)$, $\forall l,r,m \in \ZZ$.
\end{lemma}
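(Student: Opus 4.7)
The claim splits cleanly into two cases based on whether $[l,r]$ is a snake, i.e.\ whether $0 \le r-l \le n$.

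The plan is to observe first that the condition $0 \le r-l \le n$ defining a snake depends only on the difference $r-l$, which is invariant under the simultaneous shift $(l,r) \mapsto (l+m, r+m)$. Hence $[l,r]$ is a snake if and only if $[l+m, r+m]$ is a snake. In the degenerate case where neither is a snake, both $W(l,r)$ and $W(l+m, r+m)$ equal $0$ by definition, and the identity is trivial.

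In the non-degenerate case, $L([l,r])$ and $L([l+m, r+m])$ are both fundamental representations of the quantum affine algebra $\mathcal{U}_q(\widehat{\mathfrak{sl}_n})$, corresponding under the parametrization to the dominant monomials $Y_{r-l,\, q^{r+l}}$ and $Y_{r-l,\, q^{r+l+2m}}$ respectively. These two highest $l$-weights share the same underlying $\g$-weight $\omega_{r-l}$, differing only in the spectral parameter. It is standard (and can be extracted, e.g., from the $q$-character formulas recalled in Section~\ref{sec:path description}) that tensoring by a spectral-parameter shift produces an isomorphism of $\g$-modules, so the usual $\g$-character of $L([l,r])$ depends only on $r-l$. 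This gives $W(l,r) = W(l+m, r+m)$.

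There is no real obstacle here: the lemma is essentially a restatement of the spectral-parameter invariance of the usual character of a fundamental representation. The only thing to be careful about is handling the $r-l \in \{0,n\}$ boundary cases consistently, but our convention that $Y_{0,a} = Y_{n,a} = 1$ makes $L([l,r])$ trivial in those cases, so the identity still holds as $0 = 0$ or $1 = 1$ respectively.
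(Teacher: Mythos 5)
Your proof is correct and follows essentially the same route as the paper, which likewise observes that the snake condition $0 \le r-l \le n$ depends only on $r-l$ and that the usual $\g$-character of a fundamental representation is independent of the spectral parameter. The only cosmetic point is that ``tensoring by a spectral-parameter shift'' should be ``twisting by the spectral-parameter shift automorphism,'' which does not affect the argument.
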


The following lemma is a direct corollary of the determinant formula \cite[Theorem~3]{brito2024alternating}, where the formula there is stated as an equation in the Grothendieck ring of certain category of finite-dimensional representations of $\Uqghat$. Here we only need the resulting formula for their usual $\g$-characters.

\begin{lemma}[{\cite[Theorem~3]{brito2024alternating}}]\label{lemma: determinant formula}

Let $[\mathbf{l},\mathbf{r}]$ be a type $A_{n-1}$ snake of length $T$. Let $A$ be the $T \times T$ matrix whose $(i,j)$-entry is $W(l_i,r_j)$, then
    \begin{equation}
\chi(L([\mathbf{l},\mathbf{r}])) = \det (A) = \sum_{\sigma \in \Sigma_T} (-1)^{\sgn (\sigma)} \prod_{t=1}^T W(l_t,r_{\sigma(t)}),
    \end{equation}
    where $\Sigma_T$ is the permutation group of $T$ elements.
\end{lemma}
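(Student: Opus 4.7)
The plan is to deduce this identity directly from the alternating-sum formula in \cite{brito2024alternating}. That paper establishes, in the Grothendieck ring $\rep(\Uqghat)$ of finite-dimensional $\Uqghat$-modules, an identity of the form
$$[L([\mathbf{l},\mathbf{r}])] = \sum_{\sigma \in \Sigma_T} (-1)^{\sgn(\sigma)} \prod_{t=1}^T [L([l_t, r_{\sigma(t)}])],$$
under the convention that $[L([l,r])]$ is interpreted as $0$ whenever the segment $[l,r]$ fails to satisfy $0 \le r-l \le n$. This is precisely Theorem~3 of \emph{loc. cit.}, specialized via the standard correspondence between multi-segments and dominant monomials recalled at the start of Section~\ref{subsec: main theorem}.

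The next step is to pass from this identity in the Grothendieck ring to the desired identity of usual $\g$-characters. The character map $\chi : \rep(\Uqghat) \to \ZZ[P]$ is a ring homomorphism, so it is multiplicative on products, respects signs, and sends $0$ to $0$. Applying $\chi$ termwise to both sides yields
$$\chi(L([\mathbf{l},\mathbf{r}])) = \sum_{\sigma \in \Sigma_T} (-1)^{\sgn(\sigma)} \prod_{t=1}^T \chi(L([l_t, r_{\sigma(t)}])).$$
By the definition of $W(l,r)$ — equal to $\chi(L([l,r]))$ when $0 \le r-l \le n$ and zero otherwise — each factor on the right coincides with $W(l_t, r_{\sigma(t)})$. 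Recognizing the resulting expression as the Leibniz expansion of $\det(A)$ for the matrix $A = (W(l_i, r_j))_{1 \le i,j \le T}$ finishes the derivation.

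The only verification required is that the snake hypothesis on $[\mathbf{l},\mathbf{r}]$ in the sense of Section~\ref{subsec: main theorem} (the strict orderings $l_1 < \cdots < l_T$, $r_1 < \cdots < r_T$, together with $0 \le r_t - l_t \le n$) matches the snake-position condition appearing in \cite{brito2024alternating}; this is immediate from the bijection between multi-segments and dominant monomials $\prod_t Y_{r_t - l_t, q^{r_t + l_t}}$, together with Lemma~\ref{lemma: shift invariant} which confirms that $W$ depends only on $r-l$ up to shift. Since the lemma is advertised as a direct corollary of the cited theorem, no genuine obstacle is expected: the entire content is to translate an equation of classes into an equation of characters and then invoke the definition of the determinant.
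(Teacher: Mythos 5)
Your proposal is correct and follows essentially the same route as the paper: both deduce the identity by applying the character homomorphism to the Grothendieck-ring identity of \cite[Theorem~3]{brito2024alternating}, with the convention that classes of non-admissible segments vanish, and then read off the Leibniz expansion of $\det(A)$. The only detail the paper adds is a remark that its matrix $A$ differs slightly from the matrix $A(\mathbf{s})$ in \cite{brito2024alternating} but has the same determinant, which is the same bookkeeping you handle when matching the snake conventions.
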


\begin{remark}
    The matrix $A$ is slightly different from the matrix $A(\mathbf{s})$ in \cite{brito2024alternating}, but it is easy to see that their determinants are equal.
\end{remark}

\begin{definition}
    For integers $l_1,\dots,l_T,r_1,\dots,r_T \in \ZZ$, we use the notation 
    \begin{equation}\label{eqn: definition of W}
    W(\mathbf{l},\mathbf{r}) = \sum_{\sigma \in \Sigma_T} (-1)^{\sgn (\sigma)} \prod_{t=1}^T W(l_t,r_{\sigma(t)}).\end{equation}
\end{definition}

Lemma~\ref{lemma: determinant formula} states that if $[\mathbf{l},\mathbf{r}]$ is a type $A_{n-1}$ snake, then 
\begin{equation}\label{eqn: chi = W}
    \chi(L([\mathbf{l},\mathbf{r}])) = W(\mathbf{l},\mathbf{r}).
\end{equation}

\begin{corollary}\label{cor: W = 0 when two are equal}
    If $l_i = l_j$ for some $1 \leq i \neq j \leq T$, then $W(\mathbf{l},\mathbf{r}) = 0$. Similarly, if $r_i = r_j$ for some $1 \leq i \neq j \leq T$, then $W(\mathbf{l},\mathbf{r}) = 0$. 
\end{corollary}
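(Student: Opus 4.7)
The plan is to recognize that the expression defining $W(\mathbf{l},\mathbf{r})$ in \eqref{eqn: definition of W} is exactly the Leibniz expansion of the determinant of the $T \times T$ matrix $A$ with entries $A_{ij} = W(l_i, r_j)$, i.e.\ the same matrix appearing in Lemma~\ref{lemma: determinant formula}. Once this is made explicit, both statements of the corollary become the classical fact that a determinant with two equal rows (or two equal columns) vanishes. Note that one does not need $[\mathbf{l},\mathbf{r}]$ to be a snake here; the identity $W(\mathbf{l},\mathbf{r}) = \det(A)$ holds as a purely formal statement from the definition of $W(\mathbf{l},\mathbf{r})$, so Lemma~\ref{lemma: determinant formula} is not invoked, only its matrix interpretation.

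Concretely, if $l_i = l_j$ for some $i \neq j$, then the $i$-th and $j$-th rows of $A$ are both $(W(l_i, r_1), \dots, W(l_i, r_T))$, so $\det(A) = 0$. If instead $r_i = r_j$, the $i$-th and $j$-th columns of $A$ coincide, and again $\det(A) = 0$.

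For readers who prefer not to invoke determinant theory, I would also give a direct sign-reversing involution argument. In the case $l_i = l_j$, pair each permutation $\sigma \in \Sigma_T$ with $\sigma' = \sigma \circ (i\,j)$. Then $\sgn(\sigma') = -\sgn(\sigma)$, and since $l_i = l_j$,
\[
\prod_{t=1}^T W(l_t, r_{\sigma'(t)}) = W(l_i, r_{\sigma(j)}) W(l_j, r_{\sigma(i)}) \prod_{t \neq i,j} W(l_t, r_{\sigma(t)}) = \prod_{t=1}^T W(l_t, r_{\sigma(t)}),
\]
so the contributions of $\sigma$ and $\sigma'$ cancel. In the case $r_i = r_j$, one pairs $\sigma$ with $(i\,j) \circ \sigma$ instead and applies the same reasoning. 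Either approach is routine, and I expect no real obstacle: the only thing to check is that the matrix/Leibniz reading of \eqref{eqn: definition of W} is valid without any positivity or snake hypothesis on $\mathbf{l}, \mathbf{r}$, which is immediate from the definition.
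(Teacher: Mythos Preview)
Your proof is correct and follows exactly the same idea as the paper: recognize $W(\mathbf{l},\mathbf{r})$ as the determinant of the matrix $A_{ij}=W(l_i,r_j)$ and observe that two equal rows or columns force the determinant to vanish. The paper states this in one line, while you have spelled it out in more detail and added an explicit sign-reversing involution, but the substance is the same.
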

\begin{proof}
    The determinant is $0$ when two rows or two columns are identical.
\end{proof}

\subsection{Proof of Theorem~\ref{thm: type A equality}}\label{sebsec: proof of identity}
\subsubsection*{Step 1}\label{step 1}

By \eqref{eqn: chi = W}, the equality~\eqref{eqn: type A equation} is equivalent to  
\begin{equation}\label{eqn: W equation}
    \sum_{\substack{\mathbf{l} \le \mathbf{l}' <  \mathbf{l}[1]\\ |\mathbf{l}'-\mathbf{l}| = M\\ [\mathbf{l}',\mathbf{r}]~\text{is a snake}}} W(\mathbf{l}',\mathbf{r}) = \sum_{\substack{\mathbf{r}[-1] < \mathbf{r}' \le \mathbf{r} \\|\mathbf{r}-\mathbf{r}'| = M\\ [\mathbf{l},\mathbf{r}']~\text{is a snake}}} W(\mathbf{l},\mathbf{r}'),
\end{equation}

Notice that if $[\mathbf{l},\mathbf{r}]$ is a type $A_{n-1}$ snake, then a sequence $\mathbf{l}' = (l_1',\dots,l_T')$ which verifies $\mathbf{l} \le \mathbf{l}' < \mathbf{l}[1]$ is strictly increasing, because $l_t' < l_{t+1} \le l_{t+1}'$, $\forall 1 \le t \le T-1$. Moreover, $r_t - l_t' \le r_t - l_t \le n$, $\forall 1 \le t \le T$.

Therefore, if a multi-segment $[\mathbf{l}',\mathbf{r}]$ is not a type $A_{n-1}$ snake, then there exists a $t$, $1 \le t \le T$, such that $l_t' > r_t$. We claim that $W(\mathbf{l}',\mathbf{r}) = 0$ in this case.

In fact, the entries of the matrix $A$ verify that $A_{ij} = W(l_i',r_j) = 0$ for all $i \ge t$ and $j \le t$, because $l_i' \ge l_t' > r_t \ge r_j$. Any matrix satisfying this condition has determinant $0$.

Similarly, if a multi-segment $[\mathbf{l},\mathbf{r}']$ is not a type $A_{n-1}$ snake, then $W(\mathbf{l},\mathbf{r}') = 0$.

In conclusion, we can drop the condition on snakes in the summation in \eqref{eqn: W equation}, and the equation becomes equivalent to 
\[\sum_{\substack{\mathbf{l} \le \mathbf{l}' <  \mathbf{l}[1]\\ |\mathbf{l}'-\mathbf{l}| = M}} W(\mathbf{l}',\mathbf{r}) = \sum_{\substack{\mathbf{r}[-1] < \mathbf{r}' \le \mathbf{r}\\ |\mathbf{r} - \mathbf{r}'|= M}} W(\mathbf{l},\mathbf{r}'),\]
where $[\mathbf{l},\mathbf{r}]$ is a fixed type $A_{n-1}$ snake, and the sum is taken for sequences $\mathbf{l}'$ (resp. $\mathbf{r'}$).

\subsubsection*{Step 2}\label{step 2}
We prove that for any fixed sequences $\mathbf{l}$ and $\mathbf{r}$, which are not necessarily increasing, we always have 
\[\sum_{\substack{\mathbf{l} \leq  \mathbf{l}'\\ |\mathbf{l}'-\mathbf{l}| = M}} W(\mathbf{l}',\mathbf{r}) = \sum_{\substack{\mathbf{r}' \leq \mathbf{r}\\ |\mathbf{r} - \mathbf{r}'| = M}} W(\mathbf{l},\mathbf{r}'),\]
where $M \in \NN$, and the sums are taken over $\mathbf{l}'$ and $\mathbf{r}'$, respectively.

We calculate that
\[
\begin{split}
    \sum_{\substack{\mathbf{l} \leq  \mathbf{l}'\\ |\mathbf{l}' - \mathbf{l}| = M}} W(\mathbf{l}',\mathbf{r}) &= \sum_{\substack{\mathbf{l} \leq  \mathbf{l}'\\ |\mathbf{l}' - \mathbf{l}| = M}} \sum_{\sigma \in \Sigma_T} (-1)^{\sgn(\sigma)} \prod_{i=1}^T W(l_i',r_{\sigma(i)}) \\
    &= \sum_{\substack{a_1,\dots,a_T \in \NN\\ a_1 + \cdots + a_T = M}} \sum_{\sigma \in \Sigma_T} (-1)^{\sgn(\sigma)} \prod_{i=1}^T W(l_i + a_i,r_{\sigma(i)}).
    \end{split}\]
By Lemma~\ref{lemma: shift invariant}, this is equal to
    \[\begin{split}
    &\sum_{\substack{a_1,\dots,a_T \in \NN\\ a_1 + \cdots + a_T = M}} \sum_{\sigma \in \Sigma_T} (-1)^{\sgn(\sigma)} \prod_{i=1}^T W(l_i,r_{\sigma(i)}-a_i)\\
     &= \sum_{\sigma \in \Sigma_T} (-1)^{\sgn(\sigma)} \sum_{\substack{a_1,\dots,a_T \in \NN\\ a_1 + \cdots + a_T = M}} \prod_{i=1}^T W(l_i,r_{\sigma(i)}-a_i)\\
     &= \sum_{\sigma \in \Sigma_T} (-1)^{\sgn(\sigma)} \sum_{\substack{a_{\sigma(1)},\dots,a_{\sigma(T)} \in \NN\\ a_{\sigma(1)} + \cdots a_{\sigma(T)} = M}} \prod_{i=1}^T W(l_i,r_{\sigma(i)}-a_{\sigma(i)})\\
     &= \sum_{\sigma \in \Sigma_T} (-1)^{\sgn(\sigma)} \sum_{\substack{a_1,\dots,a_T \in \NN\\ a_1 + \cdots + a_T = M}} \prod_{i=1}^T W(l_i,r_{\sigma(i)}-a_{\sigma(i)})\\
    &=\sum_{\substack{a_1,\dots,a_T \in \NN\\ a_1 + \cdots + a_T = M}} \sum_{\sigma \in \Sigma_T} (-1)^{\sgn(\sigma)} \prod_{i=1}^T W(l_i,r_{\sigma(i)}-a_{\sigma(i)})\\   
      &= \sum_{\substack{\mathbf{r}'\leq \mathbf{r}\\ |\mathbf{r} - \mathbf{r}'| = M}} \sum_{\sigma \in \Sigma_T} (-1)^{\sgn(\sigma)} \prod_{i=1}^T W(l_i,r_{\sigma(i)}') =\sum_{\substack{\mathbf{r}'\leq \mathbf{r}\\ |\mathbf{r} - \mathbf{r}'| = M}} W(\mathbf{l},\mathbf{r}').
\end{split}
\]
This completes \nameref{step 2}.

\subsubsection*{Step 3}\label{step 3}
We show that when $[\mathbf{l},\mathbf{r}]$ is a type $A_{n-1}$ snake, then
\[\sum_{\substack{\mathbf{l} \le \mathbf{l}' <  \mathbf{l}[1]\\ |\mathbf{l}' - \mathbf{l}| = M}} W(\mathbf{l}',\mathbf{r}) = \sum_{\substack{\mathbf{l} \le  \mathbf{l}'\\ |\mathbf{l}' - \mathbf{l}| = M}} W(\mathbf{l}',\mathbf{r}),\]
and
\[\sum_{\substack{\mathbf{r}[-1] <  \mathbf{r}' \le \mathbf{r}\\ |\mathbf{r} - \mathbf{r}'| = M}} W(\mathbf{l},\mathbf{r}') = \sum_{\substack{\mathbf{r}' \leq  \mathbf{r}\\ |\mathbf{r} - \mathbf{r}'| = M}} W(\mathbf{l},\mathbf{r}').\]

We prove the first equality, as the second is similar. We need the following lemma.
\begin{lemma}\label{lemma: sum of W = 0 when two are equal}
    Let $\mathbf{p}$, $\mathbf{q}$ be two sequences of integers of length $T$. If $p_i=p_j$ for some $1 \le i \neq j \le T$, then
    \[\sum_{\substack{\mathbf{p} \le  \mathbf{p}'\\|\mathbf{p}'-\mathbf{p}|=M}} W(\mathbf{p}',\mathbf{q}) = 0.\]
\end{lemma}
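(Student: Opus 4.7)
The plan is to apply the determinant expansion \eqref{eqn: definition of W}, interchange the order of summation, absorb the shifts via the translation invariance of Lemma~\ref{lemma: shift invariant}, and then construct a sign-reversing fixed-point-free involution on the indexing set of the resulting double sum.

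Concretely, I would parametrize the outer sum by $\mathbf{a} = (a_1,\dots,a_T) \in \NN^T$ with $a_t := p_t' - p_t$ and $\sum_t a_t = M$. Expanding $W(\mathbf{p}',\mathbf{q})$ via \eqref{eqn: definition of W} and using Lemma~\ref{lemma: shift invariant} in the form $W(p_t+a_t, q_{\sigma(t)}) = W(p_t, q_{\sigma(t)} - a_t)$ gives
\[
\sum_{\substack{\mathbf{p}\le \mathbf{p}'\\|\mathbf{p}'-\mathbf{p}|=M}} W(\mathbf{p}',\mathbf{q}) \;=\; \sum_{\substack{\mathbf{a}\in\NN^T\\ |\mathbf{a}|=M}} \sum_{\sigma\in\Sigma_T} (-1)^{\sgn(\sigma)} \prod_{t=1}^T W(p_t, q_{\sigma(t)} - a_t).
\]
The point of performing this shift is that now the first argument of each $W$-factor depends only on $t$ (and not on $a_t$), which is what makes an involution swapping $a_i$ and $a_j$ act cleanly.

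The main step is to define the involution
\[
\Phi \colon (\sigma, \mathbf{a}) \;\longmapsto\; \bigl(\sigma \circ (i\,j),\; \tau_{ij}(\mathbf{a})\bigr),
\]
where $(i\,j) \in \Sigma_T$ is the transposition and $\tau_{ij}(\mathbf{a})$ denotes $\mathbf{a}$ with its $i$-th and $j$-th entries exchanged. Clearly $\Phi$ preserves the constraint $|\mathbf{a}|=M$ and is an involution with no fixed points, since $\sigma \circ (i\,j) = \sigma$ would force $\sigma(i) = \sigma(j)$. Moreover, $\sgn(\sigma \circ (i\,j)) = -\sgn(\sigma)$.

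The key computation is that $\Phi$ leaves the product $\prod_t W(p_t, q_{\sigma(t)} - a_t)$ invariant. Factors with $t \neq i,j$ are unchanged. The two factors with $t \in \{i,j\}$ become $W(p_i, q_{\sigma(j)} - a_j) \cdot W(p_j, q_{\sigma(i)} - a_i)$; since $p_i = p_j$ by hypothesis, this equals $W(p_j, q_{\sigma(j)} - a_j) \cdot W(p_i, q_{\sigma(i)} - a_i)$, which is the original pair of factors. Hence $\Phi$ pairs up the terms of the double sum into pairs of opposite sign, and the total vanishes. I do not anticipate any real obstacle here; the only subtle point is that one must perform the shift via Lemma~\ref{lemma: shift invariant} \emph{before} attempting to swap $a_i$ with $a_j$, since otherwise the product is not manifestly symmetric under that swap.
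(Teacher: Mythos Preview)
Your proof is correct and is essentially the paper's argument unpacked. The paper first invokes the identity of \nameref{step 2} to rewrite the sum as $\sum_{\mathbf{q}'\le\mathbf{q},\,|\mathbf{q}-\mathbf{q}'|=M} W(\mathbf{p},\mathbf{q}')$ and then applies Corollary~\ref{cor: W = 0 when two are equal} termwise, whereas you carry out the shift and the $(i\,j)$-swap cancellation directly on the expanded double sum; the underlying mechanism---translation invariance followed by antisymmetry under exchanging rows $i$ and $j$---is the same.
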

\begin{proof}
    By the equality proved in \nameref{step 2},
    \[\sum_{\substack{\mathbf{p} \le  \mathbf{p}'\\|\mathbf{p}'-\mathbf{p}|=M}} W(\mathbf{p}',\mathbf{q}) = \sum_{\substack{\mathbf{q}' \le  \mathbf{q}\\ |\mathbf{q}-\mathbf{q}'|=M}} W(\mathbf{p},\mathbf{q}') .\]
    By Corollary~\ref{cor: W = 0 when two are equal}, each term on the right-hand side is $0$.
\end{proof}

Now we are ready to complete \nameref{step 3}. We use the following notation. Let $J = (j_1,\dots,j_T)$ be a sequence such that $j_t \in \{t,t+1\} \cap \{1,2,\dots,T\}$. Denote by $\mathrm{h}(J) = \sum_{t=1}^T (j_t -t)$.

Recall that now we have $l_1 < l_2 < \cdots < l_T$. Basic set theory tells us that for any function $f(\mathbf{l}')$ depending on the sequence $\mathbf{l}'$, we have
\[\sum_{\substack{\mathbf{l} \le \mathbf{l}' < \mathbf{l}[1]\\|\mathbf{l}'-\mathbf{l}|=M}} f(\mathbf{l}') = \sum_J (-1)^{\mathrm{h}(J)} \sum_{\substack{(l_{j_1},l_{j_2},\dots,l_{j_T}) \le \mathbf{l}'\\ |\mathbf{l}'-\mathbf{l}|=M}} f(\mathbf{l}').\]

When $f(\mathbf{l}') = W(\mathbf{l}',\mathbf{r})$, the term for $J = (1,2,\dots,T)$ on the right-hand side equals to $$\sum_{\substack{\mathbf{l} \le \mathbf{l}'\\|\mathbf{l}'-\mathbf{l}|=M}} W(\mathbf{l}',\mathbf{r}),$$ and all other terms corresponding to $J \neq (1,2,\dots,T)$ are zero by Lemma~\ref{lemma: sum of W = 0 when two are equal}, because at least two indices $j_s,j_t$ are equal, $1 \le s \neq t \le T$.

Combining \nameref{step 2} and \nameref{step 3}, we have
\[\sum_{\substack{\mathbf{l} \le \mathbf{l}' <  \mathbf{l}[1]\\ |\mathbf{l}' - \mathbf{l}| = M}}  W(\mathbf{l}',\mathbf{r}) = \sum_{\substack{\mathbf{l} \le  \mathbf{l}'\\ |\mathbf{l}' - \mathbf{l}| = M}}  W(\mathbf{l}',\mathbf{r}) = \sum_{\substack{\mathbf{r}' \le  \mathbf{r}\\ |\mathbf{r} - \mathbf{r}'| = M}} W(\mathbf{l},\mathbf{r}') = \sum_{\substack{\mathbf{r}[-1] <  \mathbf{r}' \le \mathbf{r}\\ |\mathbf{r} - \mathbf{r}'| = M}}W(\mathbf{l},\mathbf{r}').\]

By the conclusion in \nameref{step 1}, we have completed the proof of Theorem~\ref{thm: type A equality}.

\subsection{Paths interpretation}\label{subsec: path interpretation of identity}
In this paper, we will need Theorem~\ref{thm: type A equality} in the language of paths. In this subsection, we derive a corollary of Theorem~\ref{thm: type A equality} formulated in terms of path descriptions.

Type $A_{2n-1}$ paths were defined in Definition~\ref{def: paths}. In this subsection and the following section, we will need type $A_{n-1}$ paths for all $n \ge 2$. While the definition is analogous to Definition~\ref{def: paths}, we present it here for completeness with a minor extension that permits $i = 0$ or $i = n$.

\begin{definition}
    For $0 \le i \le n$ and $k \in \ZZ$, a path of type $A_{n-1}$ is a finite sequence $\big( (0,y_0),\dots,(n,y_n) \big)$ in the set 
        \begin{align*}
           \mathscr{P}_{i,k}^{A_{n-1}}:=\{\big( (0,y_0), (1,y_1),& \dots, (n, y_{n}) \big)~| \\ &y_0= i+k, y_{n}=n-i+k, 
            \text{and $\lvert y_{r+1}-y_r \rvert=1$ for $0\leq r \leq n-1$}\}.
        \end{align*}
\end{definition}
    The path description of snake modules of type $A_{2n-1}^{(1)}$ (see Theorem~\ref{path description}) also applies to snake modules of type $A_{n-1}^{(1)}$ \cite[Theorem~6.1]{mukhin2012path}. We note that when $i=0$ or $i=n$, the sets $\mathscr{P}^{A_{n-1}}_{0,k}$ and $\mathscr{P}^{A_{n-1}}_{n,k}$ each consist of only one path, whose associated character is $1$. This gives the path description of the trivial representation.

\begin{lemma}\label{lemma: segment and monomial}
    Given a segment $[l,r]$ of type $A_{n-1}$, let $Y_{r-l,q^{r+l}}$ be its corresponding monomial. A path $p \in \mathscr{P}_{r-l,r+l}$ of type $A_{n-1}$ is a path of the form
    \[p = \big((0,y_0),(1,y_1),\dots,(n,y_n) \big)\]
    such that $y_0 = 2r$ and $y_n = n+ 2l$.
\end{lemma}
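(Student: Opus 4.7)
\smallskip

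The plan is to unpack the definitions directly: the statement is essentially a bookkeeping identity, so the proof is a one-line substitution once the correspondence between segments and monomials is spelled out explicitly. First I would note that under the correspondence recalled in Section~\ref{subsec: main theorem}, the segment $[l,r]$ corresponds to the monomial $Y_{r-l,\, q^{r+l}}$, so the relevant set of paths is $\mathscr{P}^{A_{n-1}}_{i,k}$ with $i = r-l$ and $k = r+l$.

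Next I would apply the definition of $\mathscr{P}^{A_{n-1}}_{i,k}$: any path $p = \big((0,y_0),(1,y_1),\dots,(n,y_n)\big)$ in this set satisfies $y_0 = i+k$, $y_n = n-i+k$, and $\lvert y_{r+1}-y_r\rvert = 1$ for $0 \le r \le n-1$. Substituting $i = r-l$ and $k = r+l$ gives
\[
y_0 \;=\; (r-l)+(r+l) \;=\; 2r, \qquad y_n \;=\; n-(r-l)+(r+l) \;=\; n+2l,
\]
which is exactly the asserted form. Conversely, any sequence of the form $\big((0,y_0),\dots,(n,y_n)\big)$ with $y_0 = 2r$, $y_n = n+2l$, and unit steps is clearly an element of $\mathscr{P}^{A_{n-1}}_{r-l,\,r+l}$.

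There is essentially no obstacle here; the only thing to be mindful of is verifying that the parity condition implicit in the path definition (which forces $y_0$ and $y_n$ to have the appropriate parities) is compatible with the values $2r$ and $n+2l$. This is automatic since $y_0 - y_n = 2r - n - 2l = -n + 2(r-l)$ has the same parity as $n$, consistent with the existence of a unit-step sequence of length $n$ between them. So the lemma follows directly from the definitions.
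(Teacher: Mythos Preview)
Your proof is correct and follows the same approach as the paper, which simply states that the lemma follows immediately from Definition~\ref{def: paths} and the segment--monomial correspondence of Section~\ref{subsec: main theorem}. You have just made the substitution $i = r-l$, $k = r+l$ explicit (and your parity check, while not strictly needed for the lemma as stated, does no harm).
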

\begin{proof}
    This follows immediately from Definition~\ref{def: paths} and the correspondence between segments and monomials as described in Section~\ref{subsec: main theorem}.
\end{proof}

Fix a sequence of points with first coordinate $0$, 
    $$(0,x_0^{(1)}),(0,x_0^{(2)}),\dots,(0,x_0^{(T)})$$
    such that $x_0^{(1)} < x_0^{(2)} < \cdots < x_0^{(T)}$, and a sequence of points with first coordinate $n$,
    $$(n,x_n^{(1)}),(n,x_n^{(2)}),\dots,(n,x_n^{(T)})$$
    such that $x_n^{(1)} < x_n^{(2)} < \cdots < x_n^{(T)}$. Assume that $-n \le x_n^{(t)} - x_0^{(t)} \le n$ for all $1 \le t \le T$.
    
\begin{definition}\label{def: sets A and B}
    Given $M \in \NN$, we define the following sets of NOP of type $A_{n-1}$:

    \[\begin{split}
        &A_{x_0^{(1)},\dots,x_0^{(T)}; x_n^{(1)},\dots,x_n^{(T)};M} := \\
        &\{\overline{p} = (p_1,\dots,p_T)~\text{non-overlapping of type}~A_{n-1}~|~(0,y_0^{(t)}) \in p_t~\text{and}~(n,y_n^{(t)}) \in p_t\\
        &\hspace{4cm} \text{such that}~x_0^{(t-1)} < y_0^{(t)} \le x_0^{(t)}, y_n^{(t)} = x_n^{(t)}~\text{and}~\sum_{t=1}^T(x_0^{(t)} - y_0^{(t)}) = 2M \},
    \end{split}\]
    where $x_0^{(-1)} = -\infty$. In other words, this is the set of paths with fixed right endpoints and left endpoints constrained to a betweenness condition.

    Similarly,
    \[\begin{split}
        &B_{x_0^{(1)},\dots,x_0^{(T)}; x_n^{(1)},\dots,x_n^{(T)};M} := \\
        &\{ \overline{p} = (p_1,\dots,p_T)~\text{non-overlapping of type}~A_{n-1}~|~(0,y_0^{(t)}) \in p_t~\text{and}~(n,y_n^{(t)}) \in p_t\\
        &\hspace{4cm} \text{such that}~y_0^{(t)} = x_0^{(t)}, x_n^{(t)} \le y_n^{(t)} < x_n^{(t+1)}~\text{and}~\sum_{t=1}^T(y_n^{(t)}-x_n^{(t)}) = 2M \},
    \end{split}\]
    where $x_n^{(T+1)} = \infty$.
\end{definition}

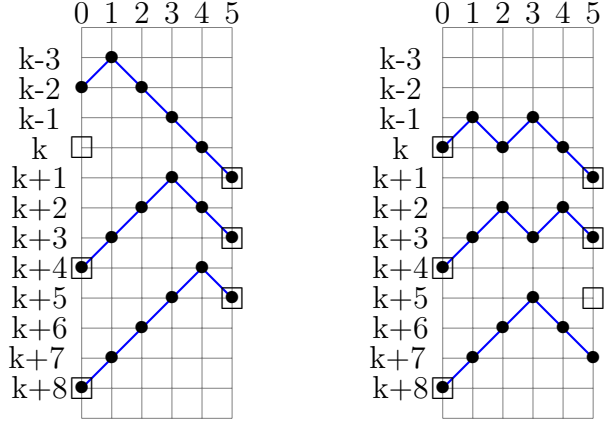
\begin{figure}[htp]
    \centering
\begin{tikzpicture}[scale=0.4]
\draw[step=1cm, gray, very thin] (0, -1) grid (5, 12);
\node at (0, 12.5) {0}; \node at (1, 12.5) {1};
\node at (2, 12.5) {2}; \node at (3, 12.5) {3};
\node at (4, 12.5) {4}; \node at (5, 12.5) {5};

\node at (-1.4, 0) {k+8};
\node at (-1.5, 1) {k+7};
\node at (-1.4, 2) {k+6};
\node at (-1.4, 3) {k+5};
\node at (-1.4, 4) {k+4};
\node at (-1.4, 5) {k+3};
\node at (-1.4, 6) {k+2}; \node at (-1.4, 7) {k+1};
\node at (-1.4, 8) {k}; \node at (-1.4, 9) {k-1};
\node at (-1.4, 10) {k-2}; \node at (-1.4, 11) {k-3};

\draw[blue, thick] (0,0)-- (1,1)-- (2,2)--(3,3)--(4,2)--(5,1);

\draw[blue, thick] (0,4)-- (1,5)-- (2,6)--(3,5)--(4,6)--(5,5);

\draw[blue, thick] (0,8)-- (1,9)-- (2,8)--(3,9)--(4,8)--(5,7);

\node at (0,0) {$\bullet$};
\node at (1,1){$\bullet$}; 
\node at (2,2){$\bullet$}; 
\node at (3,3) {$\bullet$}; 
\node at (4,2) {$\bullet$};
\node at (5,1) {$\bullet$}; 
\node at (0,4) {$\bullet$};
\node at (1,5){$\bullet$}; 
\node at (2,6){$\bullet$}; 
\node at (3,5) {$\bullet$}; 
\node at (4,6) {$\bullet$};
\node at (5,5) {$\bullet$}; 
\node at (0,8) {$\bullet$};
\node at (1,9){$\bullet$}; 
\node at (2,8){$\bullet$}; 
\node at (3,9) {$\bullet$}; 
\node at (4,8) {$\bullet$};
\node at (5,7) {$\bullet$}; 

\node at (5,7) {$\square$};
\node at (5,5) {$\square$}; 
\node at (5,3) {$\square$};
\node at (0,0) {$\square$};
\node at (0,4) {$\square$};
\node at (0,8) {$\square$};

\begin{scope}[xshift=-12cm]
\draw[step=1cm, gray, very thin] (0, -1) grid (5, 12);
\node at (0, 12.5) {0}; \node at (1, 12.5) {1};
\node at (2, 12.5) {2}; \node at (3, 12.5) {3};
\node at (4, 12.5) {4}; \node at (5, 12.5) {5};

\node at (-1.4, 0) {k+8};
\node at (-1.5, 1) {k+7};
\node at (-1.4, 2) {k+6};
\node at (-1.4, 3) {k+5};
\node at (-1.4, 4) {k+4};
\node at (-1.4, 5) {k+3};
\node at (-1.4, 6) {k+2}; \node at (-1.4, 7) {k+1};
\node at (-1.4, 8) {k}; \node at (-1.4, 9) {k-1};
\node at (-1.4, 10) {k-2}; \node at (-1.4, 11) {k-3};

\draw[blue, thick] (0,0)-- (1,1)-- (2,2)--(3,3)--(4,4)--(5,3);

\draw[blue, thick] (0,4)-- (1,5)-- (2,6)--(3,7)--(4,6)--(5,5);

\draw[blue, thick] (0,10)-- (1,11)-- (2,10)--(3,9)--(4,8)--(5,7);

\node at (0,0) {$\bullet$};
\node at (1,1){$\bullet$}; 
\node at (2,2){$\bullet$}; 
\node at (3,3) {$\bullet$}; 
\node at (4,4) {$\bullet$};
\node at (5,3) {$\bullet$}; 
\node at (0,4) {$\bullet$};
\node at (1,5){$\bullet$}; 
\node at (2,6){$\bullet$}; 
\node at (3,7) {$\bullet$}; 
\node at (4,6) {$\bullet$};
\node at (5,5) {$\bullet$}; 
\node at (0,10) {$\bullet$};
\node at (1,11){$\bullet$}; 
\node at (2,10){$\bullet$}; 
\node at (3,9) {$\bullet$}; 
\node at (4,8) {$\bullet$};
\node at (5,7) {$\bullet$};

\node at (5,7) {$\square$};
\node at (5,5) {$\square$}; 
\node at (5,3) {$\square$};
\node at (0,0) {$\square$};
\node at (0,4) {$\square$};
\node at (0,8) {$\square$};
\end{scope}

\end{tikzpicture}
\caption{A triple of NOP in $A_{k,k+4,k+8; k+1,k+3,k+5;1}$ (left) and a triple of NOP in $B_{k,k+4,k+8; k+1,k+3,k+5;1}$ (right). The points $(0,x_0^{(t)})$ and $(n,x_n^{(t)})$ are marked by squares.}
\label{fig: AB correspondence}
\end{figure}
    
\begin{corollary}\label{cor: cerrespondence of type A paths}
    The equality in Theorem~\ref{thm: type A equality} induces a (non-unique) bijection between the sets $$A_{x_0^{(1)},\dots,x_0^{(T)}; x_n^{(1)},\dots,x_n^{(T)};M}~\text{and}~B_{x_0^{(1)},\dots,x_0^{(T)}; x_n^{(1)},\dots,x_n^{(T)};M}.$$ Moreover, if we write $\overline{p}_A$ and $\overline{p}_B$ as the corresponding paths under this bijection, then $$\mathfrak{m}'(\overline{p}_A) = \mathfrak{m}'(\overline{p}_B),$$ where $\mathfrak{m}'$ is the associated $\g$-weight defined in Corollary~\ref{character-path}.
\end{corollary}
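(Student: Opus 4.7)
My plan is to invoke Theorem~\ref{thm: type A equality} and translate both sides into sums over the sets $A$ and $B$ of non-overlapping paths using the path description (Corollary~\ref{character-path} and Lemma~\ref{lemma: segment and monomial}).

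First I would identify the given data with a type $A_{n-1}$ snake by setting $r_t = x_0^{(t)}/2$ and $l_t = (x_n^{(t)} - n)/2$. The implicit parity conditions (needed for either set to be nonempty, since a path from $(0, y_0)$ to $(n, y_n)$ of type $A_{n-1}$ requires $y_n \equiv y_0 + n \pmod 2$) make these integers, and the hypotheses on $x_0^{(t)}$ and $x_n^{(t)}$ state precisely that $[\mathbf{l}, \mathbf{r}]$ is a type $A_{n-1}$ snake of length $T$. Theorem~\ref{thm: type A equality} then applies to this snake with parameter $M$.

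Next I would unpack both sides in path form. By Corollary~\ref{character-path} and Lemma~\ref{lemma: segment and monomial}, for each $\mathbf{l}'$ with $\mathbf{l} \le \mathbf{l}' < \mathbf{l}[1]$ and $|\mathbf{l}' - \mathbf{l}| = M$, the character $\chi(L([\mathbf{l}', \mathbf{r}]))$ is the sum of $[\mathfrak{m}'(\overline{p})]$ over NOPs whose $t$-th path has left endpoint $(0, 2r_t) = (0, x_0^{(t)})$ and right endpoint $(n, n+2l_t')$. Summing over all admissible $\mathbf{l}'$, the range condition $l_t \le l_t' < l_{t+1}$ becomes $y_n^{(t)} \in [x_n^{(t)}, x_n^{(t+1)})$ (restricted to parity-compatible integers), $|\mathbf{l}' - \mathbf{l}| = M$ becomes $\sum_t (y_n^{(t)} - x_n^{(t)}) = 2M$, and any $\mathbf{l}'$ for which $[\mathbf{l}', \mathbf{r}]$ is not a snake contributes nothing since no path exists outside $0 \le i \le n$. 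Hence the LHS of Theorem~\ref{thm: type A equality} equals $\sum_{\overline{p} \in B} [\mathfrak{m}'(\overline{p})]$. A symmetric argument varying $\mathbf{r}'$ identifies the RHS with $\sum_{\overline{p} \in A} [\mathfrak{m}'(\overline{p})]$.

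Applying Theorem~\ref{thm: type A equality} thus yields the equality $\sum_{\overline{p} \in A} [\mathfrak{m}'(\overline{p})] = \sum_{\overline{p} \in B} [\mathfrak{m}'(\overline{p})]$ in $\mathbb{Z}[P]$. Since $\{[\lambda]\}_{\lambda \in P}$ is a $\mathbb{Z}$-basis, the multisets $\{\mathfrak{m}'(\overline{p}) : \overline{p} \in A\}$ and $\{\mathfrak{m}'(\overline{p}) : \overline{p} \in B\}$ coincide; any choice of bijection between the (finite) fibers of $\mathfrak{m}'$ over each weight then gives the required weight-preserving bijection $A \leftrightarrow B$, with non-uniqueness reflecting the freedom in these fiberwise choices. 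I do not expect a serious obstacle here; the main care is in matching the range and parity conditions of Definition~\ref{def: sets A and B} to the snake-variation conditions of Theorem~\ref{thm: type A equality}, and in verifying that invalid snakes correspond to empty NOP sets rather than producing spurious terms.
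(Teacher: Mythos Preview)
Your proposal is correct and follows essentially the same route as the paper: identify the boundary data with a type $A_{n-1}$ snake via $r_t = x_0^{(t)}/2$, $l_t = (x_n^{(t)}-n)/2$, use Lemma~\ref{lemma: segment and monomial} and Corollary~\ref{character-path} to rewrite each side of Theorem~\ref{thm: type A equality} as a sum of $[\mathfrak{m}'(\overline{p})]$ over the sets $A$ and $B$, and then extract a weight-preserving bijection from the resulting equality of multisets. Your extra care with parity, with the fact that non-snake $\mathbf{l}'$ (or $\mathbf{r}'$) yield empty NOP sets, and with the explicit passage from equality in $\mathbb{Z}[P]$ to a fiberwise bijection is entirely in line with the paper's argument and fills in details the paper leaves implicit.
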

\begin{proof}
    By Lemma~\ref{lemma: segment and monomial}, the set of NOP $\overline{p} \in A_{x_0^{(1)},\dots,x_0^{(T)}; x_n^{(1)},\dots,x_n^{(T)};M}$ with left endpoints $(0,y_0^{(1)}),\dots,(0,y_0^{(T)})$ form the path description of the snake module $L([\mathbf{l},\mathbf{r}])$, such that $l_t = \frac{x_n^{(t)} -n}{2}$ and $r_t = \frac{y_0^{(t)}}{2}$, $1 \le t \le T$. Therefore, by Corollary~\ref{character-path}, there is a bijection between the set $A_{x_0^{(1)},\dots,x_0^{(T)}; x_n^{(1)},\dots,x_n^{(T)};M}$ and the set of terms in the character on the left-hand side of \eqref{eqn: type A equation}, which maps a $T$-tuple of NOP $\overline{p}$ to its associated character $\mathfrak{m}'(\overline{p})$.

    Similarly, the set $B_{x_0^{(1)},\dots,x_0^{(T)}; x_n^{(1)},\dots,x_n^{(T)};M}$ is in bijection with terms on the right-hand side of \eqref{eqn: type A equation} in the same way.

    Therefore, Theorem~\ref{thm: type A equality} induces a (non-unique) bijection between paths in the two sets, preserving their associated character.
\end{proof}

In addition to the above corollary, this bijection also preserves what we call the \textit{half-character at $n$} as follows.

\begin{definition}
    For a path $p = \big((0,y_0),(1,y_1),\dots,(n,y_n) \big)$ of type $A_{n-1}$, we say that its half-character at $n$ is $\omega_n'$ if $y_{n} = y_{n-1}-1$, and we say that its half-character at $n$ is $-\omega_n'$ if $y_{n} = y_{n-1}+1$.

    The half-character at $n$ of a $T$-tuple of NOP is defined as the sum of the half-characters at $n$ of each individual path.

    In other words, we have a simple expression that the half-character at $n$ of $\overline{p}$ equals to
    \[\big(\sum_{t=1}^T (y_{n-1}^{(t)} - y_n^{(t)})\big) \omega_n',\]
    where $(n-1,y_{n-1}^{(t)}),(n,y_n^{(t)}) \in p_t$, $1 \le t \le T$.
\end{definition}

\begin{proposition}\label{prop: bijection preseves half-character}
    The bijection in Corollary~\ref{cor: cerrespondence of type A paths} preserves the half-character at $n$.
\end{proposition}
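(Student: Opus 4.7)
The plan is to refine the proof of Theorem~\ref{thm: type A equality} by an extra grading that records the half-character at $n$, and then read off the proposition from the resulting sharper identity.

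I would introduce a formal variable $z$ and set
\[
W^*(l,r) := \sum_{p \in \mathscr{P}^{A_{n-1}}_{r-l,r+l}} \mathfrak{m}'(p)\, z^{y_{n-1}-y_n},
\]
so that $W(l,r) = W^*(l,r)|_{z=1}$ while the power of $z$ records the direction of the last edge. The vertical shift $p \mapsto p+(0,2m)$ is a bijection $\mathscr{P}^{A_{n-1}}_{r-l,r+l} \xrightarrow{\sim} \mathscr{P}^{A_{n-1}}_{r-l,r+l+2m}$ preserving both $\mathfrak{m}'(p)$ and $y_{n-1}-y_n$, so $W^*(l,r)=W^*(l+m,r+m)$; in particular Lemma~\ref{lemma: shift invariant} extends verbatim to $W^*$, and the refined analog of Corollary~\ref{cor: W = 0 when two are equal} holds (a determinant with two equal rows or columns still vanishes). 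I would then prove the refined determinant formula
\[
\chi^*(L([\mathbf{l},\mathbf{r}])) := \sum_{\overline{p}} \mathfrak{m}'(\overline{p})\, z^{\sum_t(y_{n-1}^{(t)}-y_n^{(t)})} \;=\; \sum_{\sigma \in \Sigma_T} (-1)^{\sgn(\sigma)} \prod_{t=1}^T W^*(l_t, r_{\sigma(t)}),
\]
by a Lindström--Gessel--Viennot argument on the planar graph of unit type-$A_{n-1}$ steps with sources $(n, n+2l_t)$ and sinks $(0, 2r_t)$: the standard sign-reversing involution swaps the tails of two paths at the vertex of largest first coordinate where they meet, and provided the $l_t$'s are distinct this vertex has first coordinate $\le n-1$, so the first edge of every path at its source is preserved and so is the exponent of $z$ on each factor. (When two of the $l_t$'s coincide, both sides vanish.)

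With the refined determinant formula in hand, Steps~1--3 of Section~\ref{sebsec: proof of identity} apply verbatim to $W^*$: Step~1 only uses the vanishing $W^*(l',r)=0$ when $r-l'<0$, Step~2 is a formal rearrangement relying only on the shift-invariance of $W^*$, and Step~3 invokes only the refined analog of Lemma~\ref{lemma: sum of W = 0 when two are equal}. This yields
\[
\sum_{\substack{\mathbf{l} \le \mathbf{l}' < \mathbf{l}[1]\\ |\mathbf{l}'-\mathbf{l}|=M}} \chi^*(L([\mathbf{l}',\mathbf{r}])) \;=\; \sum_{\substack{\mathbf{r}[-1] < \mathbf{r}' \le \mathbf{r}\\ |\mathbf{r}-\mathbf{r}'|=M}} \chi^*(L([\mathbf{l},\mathbf{r}'])),
\]
which by the path description translates, exactly as in the proof of Corollary~\ref{cor: cerrespondence of type A paths}, into
\[
\sum_{\overline{p} \in A_{\cdots;M}}\big[\mathfrak{m}'(\overline{p})\big]\, z^{c(\overline{p})} \;=\; \sum_{\overline{p} \in B_{\cdots;M}}\big[\mathfrak{m}'(\overline{p})\big]\, z^{c(\overline{p})},
\]
where $c(\overline{p}) := \sum_t(y_{n-1}^{(t)}-y_n^{(t)})$ is the integer coefficient of $\omega_n'$ in the half-character at $n$. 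Matching coefficients of each monomial $[\lambda]\,z^c$ shows that $A$ and $B$ contain the same number of NOP with $(\mathfrak{m}'(\overline{p}),\text{half-character at }n)=(\lambda,c\omega_n')$, so the (non-unique) bijection of Corollary~\ref{cor: cerrespondence of type A paths} can be chosen level-set by level-set to preserve the half-character at $n$ in addition to $\mathfrak{m}'$.

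The hard part will be the refined determinant formula: one must check that the LGV swap always takes place at an interior vertex (never at a source), so that the first edge of each path is preserved and hence so is the exponent of $z$. Everything else is a routine upgrade of the existing proof of Theorem~\ref{thm: type A equality}.
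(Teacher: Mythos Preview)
Your approach is correct but takes a far more laborious route than the paper. The paper observes that for any type-$A_{n-1}$ path $p$ with endpoints $(0,y_0)$ and $(n,y_n)$, writing $\mathfrak{m}'(p)=\sum_{j=1}^{n-1}k_j\omega_j$ and the half-character at $n$ as $k_n'\omega_n'$, one has the elementary identity
\[
y_0-y_n \;=\; \sum_{j=1}^{n-1}2jk_j + nk_n',
\]
proved by writing $2k_j=y_{j-1}-2y_j+y_{j+1}$ and summing by parts. Summing over the $T$ paths, $k_n'$ is therefore determined by $\mathfrak{m}'(\overline{p})$ together with $\sum_t(y_0^{(t)}-y_n^{(t)})$. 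The bijection of Corollary~\ref{cor: cerrespondence of type A paths} already preserves $\mathfrak{m}'$, and by the very definition of the sets $A$ and $B$ both sides satisfy $\sum_t(y_0^{(t)}-y_n^{(t)})=\sum_t(x_0^{(t)}-x_n^{(t)})-2M$, so the half-character is preserved automatically. No refinement of Theorem~\ref{thm: type A equality} is needed.

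Your argument instead upgrades the whole machinery: a $z$-graded $W^*$, a refined LGV determinant formula (which the paper does not prove even in the ungraded case, citing \cite{brito2024alternating}), and a rerun of Steps~1--3. This does work --- the LGV swap indeed occurs at first coordinate $\le n-1$ when the sources are distinct, so the edge between coordinates $n-1$ and $n$ is untouched --- and it yields the stronger, bigraded identity. But that extra strength is vacuous here: the half-character is not an independent invariant, it is a linear function of data already preserved. So the paper's proof is a two-line linear-algebra observation where yours rebuilds the entire Section~\ref{sec:equality} argument with an extra variable.
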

\begin{proof}
    We begin with a single path $p$ with endpoints $(0,y_0)$ and $(n,y_n)$. Let $\mathfrak{m}'(p) = k_1\omega_1 + \cdots k_{n-1}\omega_{n-1}$ be its associated weight. Let $k_n'\omega_n'$ be its half-character at $n$, $k_n' \in \{\pm 1\}$. We claim that 
    \[y_0 - y_n = 2k_1 + 4k_2 + \cdots + 2(n-1)k_{n-1}+nk_n'.\]

    In fact, we note that at each $1 \le j \le n-1$, $(j,y_j)$ is  
    \begin{itemize}
        \item an upper corner of $p$ if and only if $y_{j-1} - 2y_{j} + y_{j+1}= 2$,
        \item a lower corner of $p$ if and only if $y_{j-1} - 2y_{j} + y_{j+1}= -2$,
        \item neither an upper corner nor a lower corner if and only if $y_{j-1} - 2y_{j} + y_{j+1}= 0$.
    \end{itemize}
    Therefore, $2k_j = y_{j-1} - 2y_j + y_{j+1}$. Then we can calculate that 
    \[y_0 - y_n = \sum_{j=1}^{n-1}j(y_{j-1}-2y_j+y_{j+1}) + ny_{n-1} -ny_n = \sum_{j=1}^{n-1} 2jk_j + nk_n'.\]

    Since the weight $\mathfrak{m}'$ of a $T$-tuple of NOP is the sum of the weight of each single path, for a $T$-tuple of NOP $\overline{p} = (p_1,\dots,p_T)$, whose associated weight is $\mathfrak{m}'(\overline{p}) = k_1\omega_1 + \cdots k_{n-1}\omega_{n-1}$, and whose half-character at $n$ is $k_n'\omega_n'$, $k_n' \in \ZZ$, we have 
    \[\sum_{t=1}^T (y_0^{(t)} - y_n^{(t)}) = 2k_1 + 4k_2 + \cdots + 2(n-1)k_{n-1}+nk_n',\]
    where $(0,y_0^{(t)}), (n,y_n^{(t)}) \in p_t$.
    
    Note that the one-to-one correspondence in Corollary~\ref{cor: cerrespondence of type A paths} preserves the weight $\mathfrak{m}'$, and it also preserves the sum $\sum_{t=1}^T (y_0^{(t)} - y_n^{(t)})$, which equals to $\sum_{t=1}^T (x_0^{(t)} - x_n^{(t)}) - 2M$. By the above identity, the one-to-one correspondence preserves $k_n'$ as well.
\end{proof}

\section{Langlands branching rule}\label{sec:Langlands branching rule}
We have proved in Theorem~\ref{thm: langlands dual rep A to B} that for a shortened snake module $V$ of $\Uqghat$ of type $B_n^{(1)}$, there is a shortened snake module ${}^L{V}$ of $\Uqghatdual$, whose highest weights are in correspondence, such that $\chi^{\sigma}({}^L{V}) \preceq \Pi (\chi(V))$. We have also established a criteria, using the notion of gaps, to determine which term of $\Pi(\chi(V))$ appears in the character of ${}^L{V}$. In this section, we establish an equality
\[ \Pi(\chi(V)) = \chi^{\sigma}({}^L{V})  + \sum_W \chi^{\sigma}(W),\]
which decomposes $\Pi(\chi(V))$ into a sum of characters of irreducible representations of $\Uqghatdual$.

\subsection{Paths of non-zero gap}
Given a shortened snake $(i_t,k_t)_{1 \le t \le T}$ of type $B_n$. Let $V = L(\prod_{t:i_t \neq n}Y_{i_t,q^{k_t}} \prod_{t: i_t=n}Y_{n,q^{k_t-1}}Y_{n,q^{k_t+1}})$ be the corresponding shortened snake module of the quantum affine algebra of type $B_n^{(1)}$.

Let $(p_1,\dots,p_T) \in \overline{\mathscr{P}}^B_{(i_t,k_t)_{1\leq t \leq T}}$ be a $T$-tuple of NOP for this module. Recall that we have defined a map $F^{-1}$ in Proposition~\ref{prop: level 0 bijection} which associates each path $p \in \mathscr{P}_{i,k}^B$ of gap $0$ to a path $F^{-1}(p)$ in $\mathscr{P}^A_{i,\frac{k}{2}}$, where the notion gap is defined in Definition~\ref{def: level}.

In this subsection, we generalize the map $F^{-1}$ to paths of arbitrary gap.

For a single path $p \in \mathscr{P}_{i,k}^B$, suppose that
    \begin{align*}
        && p = \big( (0,y_0), &(2,y_1), \dots, (2n-4, y_{n-2}), (2n-2,y_{n-1}), (2n-1,y_n), \\
        && &(2n-1,z_n), (2n,z_{n-1}),\dots,(4n-4,z_1),(4n-2,z_0) \big),
    \end{align*}
then we define a pair of type $A_{n-1}$ paths by
    \begin{align*}
        L(p) = \big( (0,\frac{y_0}{2}),(1,\frac{y_1}{2}), \dots, (n-1, \frac{y_{n-1}}{2}), (n,\lfloor \frac{y_{n}+1}{2} \rfloor) \big) \in \mathscr{P}^{A_{n-1}}_{i,k},
    \end{align*}
    where $i = \frac{y_0-2 \lfloor \frac{y_{n}+1}{2} \rfloor+ 2n}{4}$, $k = \frac{y_0+ 2 \lfloor \frac{y_{n}+1}{2} \rfloor-2n}{4}$,
    and 
    \begin{align*}
        R(p) = \big((n,\lfloor \frac{z_n + 3}{2} \rfloor), (n+1,\frac{z_{n-1}+2}{2}),\dots,(2n-1,\frac{z_1+2}{2}), (2n,\frac{z_0+2}{2}) \big),
    \end{align*}
     where we label the first coordinate from $n$ to $2n$, instead of from $0$ to $n$, for later simplicity.
     
$L(p)$ and $R(p)$ are respectively determined by the left and the right branch of $p$.

For a $T$-tuple of NOP $\overline{p} = (p_1,\dots,p_T) \in \overline{\mathscr{P}}^B_{(i_t,k_t)_{1\leq t \leq T}}$, we associate it with a pair of NOP of type $A_{n-1}$: $(L(p_1),\dots,L(p_T))$ and $(R(p_1),\dots,R(p_T))$.

\begin{lemma}
    We have
    \[\lfloor \frac{z_n^{(t)}+3}{2} \rfloor \le \lfloor \frac{y_n^{(t)}+1}{2} \rfloor, \quad \forall 1 \le t \le T,\]
    and 
    \[ \lfloor \frac{y_n^{(t)}+1}{2} \rfloor < \lfloor \frac{z_n^{(t+1)}+3}{2} \rfloor, \quad \forall 1 \le t \le T-1. \]
\end{lemma}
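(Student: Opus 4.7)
The plan is to handle the two inequalities separately. The first inequality, applied to a single path $p_t$, is exactly the statement $\mathrm{gap}(p_t) \ge 0$ already proved in the remark following Definition~\ref{def: level}. That argument rests on the parity characterization
\[y_n \in (4\ZZ+3-\epsilon) \sqcup (4\ZZ+1+\epsilon), \qquad z_n \in (4\ZZ+3+\epsilon) \sqcup (4\ZZ+1-\epsilon),\]
combined with the inequality $y_n > z_n$ built into Definition~\ref{def: paths}. One verifies this parity claim by computing $y_0, z_0 \pmod 4$ from the congruence $k \equiv 2n+2i+2 \pmod 4$, propagating through the $\pm 2$ steps of each branch to obtain $y_{n-1}, z_{n-1} \pmod 4$, and then applying the final $\pm(1+\epsilon)$ step.

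For the second inequality, the crucial input is the non-overlapping condition $p_t \succ p_{t+1}$. Since both $(2n-1, y_n^{(t)}) \in p_t$ and $(2n-1, z_n^{(t+1)}) \in p_{t+1}$ share first coordinate $2n-1$, the definition of ``strictly above'' immediately yields the real inequality $y_n^{(t)} < z_n^{(t+1)}$. I would then upgrade this to the strict inequality of floors via a short case analysis over the four combinations coming from the parity characterization above. Writing $\lfloor (y_n^{(t)}+1)/2 \rfloor = 2a+1$, with $a$ determined by $y_n^{(t)} \in [4a+1, 4a+3)$, and letting $\lfloor (z_n^{(t+1)}+3)/2 \rfloor$ equal $2b+3$ or $2b+1$ according as $z_n^{(t+1)} \in 4\ZZ+3+\epsilon$ or $z_n^{(t+1)} \in 4\ZZ+1-\epsilon$, each of the four sub-cases reduces to checking that $y_n^{(t)} < z_n^{(t+1)}$ forces $b \ge a$ or $b \ge a+1$ respectively, which in turn gives the desired strict inequality of floors.

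The main obstacle is merely organizational: keeping track of the four sub-cases cleanly. Conceptually the content is that the separation between the two sets $(4\ZZ+1+\epsilon) \sqcup (4\ZZ+3-\epsilon)$ containing $y_n^{(t)}$ and $(4\ZZ+3+\epsilon) \sqcup (4\ZZ+1-\epsilon)$ containing $z_n^{(t+1)}$ is at least $2\epsilon$ whenever the two values could otherwise project to the same floor; this $2\epsilon$ gap, arising from the opposite signs of the $\pm \epsilon$ perturbations in the two lattices, is precisely what upgrades $y_n^{(t)} < z_n^{(t+1)}$ to a strict inequality on the floor values.
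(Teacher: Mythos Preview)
Your proposal is correct and follows the same outline as the paper: the first inequality is the nonnegativity of the gap already recorded after Definition~\ref{def: level}, and for the second you extract $y_n^{(t)} < z_n^{(t+1)}$ from $p_t \succ p_{t+1}$ and then upgrade to the floor inequality.

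The only difference is in that last upgrade. The paper simply writes ``thus'', and your four-case parity analysis is a valid way to justify it, but it is more work than needed. There is a one-line argument that uses nothing about the $\epsilon$-lattice structure: for any real numbers $y<z$, the elementary bound $\lfloor x\rfloor > x-1$ gives
\[
\Big\lfloor \tfrac{y+1}{2}\Big\rfloor \;\le\; \tfrac{y+1}{2} \;<\; \tfrac{z+1}{2} \;=\; \tfrac{z+3}{2}-1 \;<\; \Big\lfloor \tfrac{z+3}{2}\Big\rfloor,
\]
so $\lfloor (y+1)/2\rfloor < \lfloor (z+3)/2\rfloor$ follows immediately from $y<z$, with no case split. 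The offset by $+3$ rather than $+1$ in the second floor is exactly what makes the strict inequality automatic; your $2\epsilon$-gap observation, while true, is not what is doing the work. So your argument is sound, just longer than necessary.
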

\begin{proof}
    We have seen the first equality in the definition of gaps. For the second inequality, one notice that $p_t \succ p_{t+1}$ implies $y_n^{(t)} < z_n^{(t+1)}$, thus $\lfloor \frac{y_n^{(t)}+1}{2} \rfloor < \lfloor \frac{z_n^{(t+1)}+3}{2} \rfloor$.
\end{proof}

The following proposition is an analogue of Lemma~\ref{lemma: F injective} and Proposition~\ref{prop: level 0 bijection}.
\begin{proposition}\label{prop: bijection type B and pair of type A}
    The map which associates $\overline{p}$ with $(L(\overline{p}),R(\overline{p}))$ defines a bijection between the set $\overline{\mathscr{P}}^B_{(i_t,k_t)_{1\leq t \leq T}}$ and the set
    \[\begin{split}
        \{(\overline{q}',\overline{q})~&\text{a pair of $T$-tuples of NOP of type}~A_{n-1}~|~(0,i_t+\frac{k_t}{2}) \in q_t',\\
        &(2n,2n-i_t+\frac{k_t}{2}) \in q_t,~{x_n}^{(1)} \le {x_n'}^{(1)} < {x_n}^{(2)} \le {x_n'}^{(2)} < \cdots < {x_n}^{(T)} \le {x_n'}^{(T)}\},
    \end{split}
    \]
    where $\overline{q}' = (q_1',\dots,q_T')$, $\overline{q} = (q_1,\dots,q_T)$, and $(n,{x_n'}^{(t)}) \in q_t'$, $(n,{x_n}^{(t)}) \in q_t$, $\forall t$.
\end{proposition}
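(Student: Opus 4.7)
The plan is to construct an explicit inverse on each single-path pair and then verify that the non-overlapping conditions match up on both sides. For each pair $(q_t', q_t)$ in the target set, $p_t$ is reconstructed by doubling to recover $y_0^{(t)}, \ldots, y_{n-1}^{(t)}$ from $q_t'$ and $z_0^{(t)}, \ldots, z_{n-1}^{(t)}$ from $q_t$, and by reading the direction of the last step of $q_t'$ (resp.\ the first step of $q_t$) to determine $y_n^{(t)}$ (resp.\ $z_n^{(t)}$) up to the sign of the $\epsilon$-perturbation; for instance, if $q_t'$ ends with $(n{-}1,a)\to(n,a+1)$, then $y_n^{(t)} = 2{x_n'}^{(t)} - 1 + \epsilon$, with analogous formulas in the other three cases. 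A direct calculation shows this is a two-sided inverse to $L$ and $R$ on each path.

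Given this inverse, I would then argue the compatibility of structure. The non-overlapping of $\overline{p}$ at any first coordinate different from $2n-1$ is equivalent to the corresponding inequality in $\overline{q}'$ or $\overline{q}$, and the endpoint conditions $(0, i_t+k_t/2) \in q_t'$ and $(2n, 2n-i_t+k_t/2) \in q_t$ match the endpoints of the left and right branches of $p_t \in \mathscr{P}^B_{i_t,k_t}$. For the forward direction, the interleaving $x_n^{(t)} \le {x_n'}^{(t)} < x_n^{(t+1)}$ follows from $z_n^{(t)} < y_n^{(t)} < z_n^{(t+1)}$ via the identities ${x_n'}^{(t)} = \lfloor (y_n^{(t)}+1)/2 \rfloor$ and $x_n^{(t)} = \lfloor (z_n^{(t)}+3)/2 \rfloor$.

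The main obstacle is the reverse direction at first coordinate $2n-1$: the integer inequality ${x_n'}^{(t)} < x_n^{(t+1)}$ must imply the strict half-integer inequality $y_n^{(t)} < z_n^{(t+1)}$, but with integers differing only by $1$ this fails in two of the four direction combinations encoded by $q_t'$ and $q_{t+1}$. The key fix is a hidden parity constraint coming from the snake condition: $(i_t,k_t)\in\mathcal{X}^B$ gives $k_t \equiv 2(n+i_t+1) \pmod 4$, so a short computation shows ${x_n'}^{(t)} \equiv i_t+k_t/2+n \equiv 1 \pmod 2$, and analogously $x_n^{(t)} \equiv 1 \pmod 2$ for every $t$. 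The strict inequality ${x_n'}^{(t)} < x_n^{(t+1)}$ between two odd integers therefore forces a gap of at least $2$, which is exactly what is needed to ensure $y_n^{(t)} < z_n^{(t+1)}$ in all four direction cases. The same parity argument together with the weak inequality $x_n^{(t)} \le {x_n'}^{(t)}$ also secures $y_n^{(t)} > z_n^{(t)}$ via the $\epsilon$-perturbation even when $x_n^{(t)} = {x_n'}^{(t)}$, so that $p_t$ is indeed a valid element of $\mathscr{P}^B_{i_t,k_t}$ and $\overline{p}$ is non-overlapping, completing the bijection.
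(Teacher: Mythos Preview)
Your proposal is correct and follows essentially the same approach as the paper: construct the explicit inverse on single paths, then use a parity argument to upgrade the strict integer inequality ${x_n'}^{(t)} < x_n^{(t+1)}$ to a gap of at least $2$, which is exactly what is needed for non-overlapping at first coordinate $2n-1$. The paper phrases the parity step more tersely (``by definition of type $A_{n-1}$ paths, ${x_n'}^{(t)} \equiv x_n^{(t+1)} \pmod 2$''), while you derive it explicitly from the congruence defining $\mathcal{X}^B$; these are the same observation.
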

\begin{proof}
    The injectivity follows immediately from the definition of $L(p)$ and $R(p)$.

    We prove the surjectivity by writing down the preimage explicitly. Given a pair of paths $(q',q)$ of type $A_{n-1}$ such that $(0,i+\frac{k}{2}) \in q'$, $(2n,2n-i+\frac{k}{2}) \in q$ and $x_n \le x_n'$, where $(n,x_n') \in q'$, $(n,x_n) \in q$, we write their points as $(j,x_j') \in q'$ and $(j,x_j) \in q$, $0 \le j \le n$.
    
    Its preimage is the type $B_n$ path in $\mathscr{P}^B_{i,k}$ of the form 
    \begin{align*}
        p = \big( (0,y_0), &(2,y_1), \dots, (2n-4, y_{n-2}), (2n-2,y_{n-1}), (2n-1,y_n), \\
         &(2n-1,z_n), (2n,z_{n-1}),\dots,(4n-4,z_1),(4n-2,z_0) \big),
    \end{align*}
    with coordinates 
    \[y_j = 
    \begin{cases}
        2x_j', &\text{if}~j < n,\\
        2x_n'-1 + \epsilon, &\text{if}~j=n~\text{and}~x_n'>x_{n-1}', \\
        2x_n'+1-\epsilon, &\text{if}~j=n~\text{and}~x_n'<x_{n-1}',
    \end{cases}
    \]
    and
    \[z_j = 
    \begin{cases}
        2x_{2n-j}-2, &\text{if}~j < n,\\
        2x_n-3 + \epsilon, &\text{if}~j=n~\text{and}~x_n>x_{n+1}, \\
        2x_n-1-\epsilon, &\text{if}~j=n~\text{and}~x_n<x_{n+1}.
    \end{cases}
    \]
    One verifies directly that $L(p) = q'$ and $R(p) = q$.

    For a pair of $T$-tuples of NOP $(\overline{q}',\overline{q})$ satisfying the condition in the proposition, the preimage is the $T$-tuple of NOP formed by taking the preimage of each component. Note that, by definition of type $A_{n-1}$ paths, we have ${x_n'}^{(t)} \equiv {x_n}^{(t+1)} \pmod 2$. Therefore, the condition ${x_n'}^{(t)} < {x_n}^{(t+1)}$ implies ${x_n'}^{(t)} \le {x_n}^{(t+1)} -2$, which ensures that the preimage is non-overlapping.
\end{proof}

\begin{remark}
    We notice that the formula for the preimage of $(L(p),R(p))$ is almost identical to the formula of the map $F$ defined in Definition~\ref{def: map F between paths}, except that $F^{-1}$ requires the path to have gap $0$. Therefore, this construction can be regarded as a generalization of the map $F$ to the case of arbitrary gaps.
\end{remark}

Recall that the gap of $\overline{p} = (p_1,\dots,p_T) \in \overline{\mathscr{P}}^B_{(i_t,k_t)_{1\leq t \leq T}}$ is defined as \[\mathrm{gap}(\overline{p}) := \sum_{t=1}^T (\lfloor \frac{y_n^{(t)}+1}{2} \rfloor -  \lfloor \frac{z_n^{(t)}+3}{2} \rfloor),\]
where $(2n-1,y_n^{(t)}), (2n-1,z_n^{(t)}) \in p_t$. This suggests that $\overline{p}$ has gap $0$ if and only if the right endpoints of $L(p_t)$ coincide with the left endpoints of $R(p_t)$ for all $1 \le t \le T$.

We have proved in Section~\ref{subsec: Langlands dual from B to A} that $T$-tuples of NOP $(p_1,\dots,p_T) \in \overline{\mathscr{P}}^B_{(i_t,k_t)_{1\leq t \leq T}}$ having gap $0$ are in one-to-one correspondence with $T$-tuples of NOP in $\overline{\mathscr{P}}^A_{(i_t,\frac{k_t}{2})_{1\leq t \leq T}}$. In this section, we construct a correspondence between all NOP $(p_1,\dots,p_T) \in \overline{\mathscr{P}}^B_{(i_t,k_t)_{1\leq t \leq T}}$ and type $A_{2n-1}$ NOP.

\subsection{Bijection between type \texorpdfstring{$B_n$}{} paths and type \texorpdfstring{$A_{2n-1}$}{} paths}
Fix a shortened snake $(i_t,k_t)_{1 \le t \le T}$ of type $B_n$. Fix a $T$-tuple of NOP $\overline{q} = (q_1,\dots,q_T)$ of type $A_{n-1}$ such that $(2n,2n-i_t+\frac{k_t}{2}) \in q_t$. Here we label their first coordinates of $\overline{q}$ from $n$ to $2n$, instead of from $0$ to $n$, as above. We denote the left endpoints of $q_t$ by $(n,x^{(t)})$, $1 \le t \le T$.

\begin{definition}
    For a given $T$-tuple of NOP $\overline{q}$ of type $A_{n-1}$, we denote by
    \[L_{\overline{q}} = \{(p_1,\dots,p_T) \in \overline{\mathscr{P}}^B_{(i_t,k_t)_{1\leq t \leq T}}~|~(R(p_1),\dots,R(p_T)) = \overline{q} \}.\]
\end{definition}

In other words, this is the set of $(p_1,\dots,p_T) \in \overline{\mathscr{P}}^B_{(i_t,k_t)_{1\leq t \leq T}}$ such that their associated $(R(p_1),\dots,R(p_T))$ are the same and equal to the given $\overline{q}$.

By the path description of type $B_n$ snake modules, $(p_1,p_2,\dots,p_T) \in L_{\overline{q}}$ if and only if $(R(p_1),\dots,R(p_T))=\overline{q}$ and $(L(p_1),\dots,L(p_T))$ is a $T$-tuple of NOP of type $A_{n-1}$ such that 
\begin{itemize}
    \item the left endpoints are fixed by the snake as $(0,\frac{y_0^{(t)}}{2}) = (0,\frac{2i_t+k_t}{2}) \in L(p_t)$,
    \item and the right endpoints are $(n,a_n^{(t)}): = (n, \lfloor \frac{y_{n}^{(t)}+1}{2} \rfloor)$ such that 
    \[x^{(t)} \le a_n^{(t)} < x^{(t+1)}, \]
    where $x^{(t)} = \lfloor \frac{z_n^{(t)} + 3}{2} \rfloor$.
\end{itemize}

In other words, $(L(p_1),\dots,L(p_T))$ is a $T$-tuple of NOP of type $A_{n-1}$ in the set $$B_{\frac{2i_1+k_1}{2},\dots,\frac{2i_T+k_T}{2};x^{(1)},\dots,x^{(T)};M}$$
defined in Definition~\ref{def: sets A and B}, for some integer $M \in \NN$. Here, by definition, $M$ equals to the gap of $\overline{p} = (p_1,\dots,p_T)$.

By Corollary~\ref{cor: cerrespondence of type A paths}, these $T$-tuples of NOP are in one-to-one correspondence with $T$-tuples of NOP in the set 
$A_{\frac{2i_1+k_1}{2},\dots,\frac{2i_T+k_T}{2};x^{(1)},\dots,x^{(T)};M}$, in such a way that the associated character $\mathfrak{m}'(L(p_1),\dots,L(p_T))$ and also the associated half-character at $n$ are preserved.

Notice that for any $T$-tuple of NOP $(p_1,\dots,p_T)$ in $A_{\frac{2i_1+k_1}{2},\dots,\frac{2i_T+k_T}{2};x^{(1)},\dots,x^{(T)};M}$, the right endpoint of $p_t$ coincides with the left endpoint of $q_t$, which are both $x^{(t)} = \lfloor \frac{z_n^{(t)}+3}{2} \rfloor$. Therefore, these paths together with $\overline{q}$ connected to the right from a $T$-tuple of NOP of type $A_{2n-1}$.

In this way, we have associated a $T$-tuple of NOP of type $B_{n}$ with a $T$-tuple of NOP of type $A_{2n-1}$. An example is illustrated in Figure~\ref{fig: s and p in correspondence}.

In conclusion, we have
\begin{proposition}\label{prop: decomposition theorem}
    Given a fixed $T$-tuple of NOP $\overline{q} = (q_1,\cdots,q_T)$ of type $A_{n-1}$ and a fixed integer $M \in \NN$, let $(n,x^{(t)}) \in q_t$ be the left endpoint of $q_t$. There is a bijection between the set
    $$\{\overline{p} \in L_{\overline{q}}~|~\mathrm{gap}(\overline{p}) = M\}$$
    and the set 
    \begin{align*}
        \mathcal{S}_{\overline{q},M} : = \{\overline{s} = (s_1,\dots,s_T)~\text{$T$-tuple of NOP of type}~A_{2n-1}~| \\
        \overline{s}|_{[0,n]} \in A_{\frac{2i_1+k_1}{2},\dots,\frac{2i_T+k_T}{2};x^{(1)},\dots,x^{(T)};M}~\text{and}~\overline{s}|_{[n,2n]} = \overline{q} \},
    \end{align*}
    where $\overline{s}|_{[0,n]}$ is the $T$-tuple of NOP of type $A_{n-1}$ formed by the subsequences of each $s_t$ consisting of points whose first coordinate ranges from $0$ to $n$. The tuple $\overline{s}|_{[n,2n]}$ is defined similarly. 
    
    Moreover, this correspondence preserves character in the sense that
    \[\Pi \circ \mathfrak{m}'(\overline{p}) = \varpi \circ \mathfrak{m}'(\overline{s})\]
    for all $\overline{p}$ and $\overline{s}$ in correspondence, where $\varpi$ is the folding map in \eqref{eqn: folding character}.
\end{proposition}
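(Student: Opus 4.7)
The plan is to construct the bijection in two stages and then verify character preservation by splitting both sides into left branch, center ($n$), and right branch contributions. For the construction, note that $\overline{p} \in L_{\overline{q}}$ already forces $R(\overline{p}) = \overline{q}$, so the content of $\overline{p}$ is carried entirely by $L(\overline{p})$. Unfolding the definitions of $\mathrm{gap}$, of $L, R$, and of the sets in Definition~\ref{def: sets A and B}, I would check that $\overline{p} \mapsto L(\overline{p})$ gives a bijection between $\{\overline{p} \in L_{\overline{q}} : \mathrm{gap}(\overline{p}) = M\}$ and $B_{(2i_1+k_1)/2,\dots,(2i_T+k_T)/2;\, x^{(1)},\dots,x^{(T)};\, M}$. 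Corollary~\ref{cor: cerrespondence of type A paths} then provides a bijection between this $B$-set and the corresponding $A$-set. Any element $\overline{p}_A$ in the $A$-set has its $t$-th path ending exactly at $(n, x^{(t)})$, which coincides with the left endpoint of $q_t$, so concatenating $\overline{p}_A$ and $\overline{q}$ at $n$ produces the desired $\overline{s} \in \mathcal{S}_{\overline{q}, M}$. The non-overlap of $\overline{s}$ follows from the non-overlaps of $\overline{p}_A$ and $\overline{q}$, together with the chain $x^{(1)} < x^{(2)} < \cdots < x^{(T)}$.

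For the character equality $\Pi \circ \mathfrak{m}'(\overline{p}) = \varpi \circ \mathfrak{m}'(\overline{s})$, I would decompose $\mathfrak{m}'(\overline{p})$ into a left-branch part at indices $\{1,\dots,n-1\}$, a central part $c_n \omega_n$ at index $n$, and a right-branch part contributing (via $\tau^{-1}$) at indices $\{1,\dots,n-1\}$ again; split $\mathfrak{m}'(\overline{s})$ analogously by first coordinate in $[1,n-1]$, $\{n\}$, $[n+1, 2n-1]$. The right-branch match is immediate: a corner of $q_t$ at coordinate $n+k$ corresponds one-to-one to a corner of the right branch of $p_t$ at first coordinate $2n+2(k-1) = 4n-2-2(n-k)$, and $\varpi(\omega_{n+k}) = \check{\omega}_{\overline{n-k}} = \Pi(\omega_{n-k})$ closes this case. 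The left-branch match follows from the observation that corners of $L(p_t)$ at position $j \in \{1,\dots,n-1\}$ correspond one-to-one to corners of the left branch of $p_t$ at first coordinate $2j$, carrying the same $\pm \omega_j$, so that the $\omega_j$-contribution of $L(\overline{p})$ coincides with that of $\overline{p}_A$ by Corollary~\ref{cor: cerrespondence of type A paths}, and $\Pi(\omega_j) = \check{\omega}_{\bar{j}} = \varpi(\omega_j)$ completes the match.

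The main obstacle will be the central contribution at $n$: the coefficient $c_n$ lies in $\{-2, 0, +2\}$ because each of $y_n^{(t)}, z_n^{(t)}$ produces at most one corner of $p_t$ at $n$, while the coefficient $k_n^s$ of $\omega_n$ in $\mathfrak{m}'(\overline{s})$ lies in $\{-1, 0, +1\}$; the factor of $2$ is exactly absorbed by $\Pi(2\omega_n) = \check{\omega}_{\bar{n}} = \varpi(\omega_n)$, so the claim reduces to $c_n = 2 k_n^s$ for each $t$. I would introduce signs $h_L^{(t)}, h_R^{(t)} \in \{\pm 1\}$ recording whether the last edge of the left branch of $p_t$ goes up or down visually, and similarly for the first edge of its right branch, and then check the four sign combinations directly: $(h_L, h_R) = (+, -)$ produces two upper corners in $p_t$ at $n$ and an upper corner of $s_t$ at $n$, giving $c_n = +2$, $k_n^s = +1$; symmetrically $(-, +)$ gives $c_n = -2$, $k_n^s = -1$; and $(+, +)$ or $(-, -)$ gives $c_n = 0 = k_n^s$ (the latter requiring a sub-case when $y_n^{(t)}$ and $z_n^{(t)}$ lie at adjacent $\pm \epsilon$-positions of the same odd integer, where no corner at $n$ exists on either side). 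Proposition~\ref{prop: bijection preseves half-character} guarantees that $h_L^{(t)}$ is preserved by the Corollary~\ref{cor: cerrespondence of type A paths} bijection, and $h_R^{(t)}$ is preserved trivially because $R(\overline{p}) = \overline{q}$ is fixed; hence the sign pattern on the $\overline{s}$-side agrees with that on the $\overline{p}$-side, making the case analysis globally consistent.
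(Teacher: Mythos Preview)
Your construction of the bijection and the treatment of the left- and right-branch contributions are essentially the same as the paper's. The gap is in the handling of the central coefficient at $n$.

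You reduce the problem to checking $c_n = 2k_n^s$ ``for each $t$'' and then invoke Proposition~\ref{prop: bijection preseves half-character} to say that $h_L^{(t)}$ is preserved. But the bijection of Corollary~\ref{cor: cerrespondence of type A paths} is a bijection of $T$-tuples, not a path-by-path map: the $t$-th path of $L(\overline{p})$ need not correspond in any way to the $t$-th path of $\overline{s}|_{[0,n]}$. Proposition~\ref{prop: bijection preseves half-character} only preserves the \emph{total} half-character $\sum_t h_L^{(t)}$, not the individual signs. So your four-case analysis cannot be read as a direct per-$t$ comparison of $p_t$ to $s_t$.

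The fix is to separate the two roles of your case analysis. First, the four cases on $(h_L^{(t)},h_R^{(t)})$ legitimately establish, for each $t$ on the $\overline{p}$-side alone, that $c_n^{(t)} = h_L^{(t)} + h_R^{(t)}$; summing gives $c_n = \sum_t h_L^{(t)} + \sum_t h_R^{(t)}$. Second, on the $\overline{s}$-side one has $k_n^s = \tfrac{1}{2}\sum_t\bigl((b_{n-1}^{(t)}-b_n^{(t)}) + (b_{n+1}^{(t)}-b_n^{(t)})\bigr)$, which is $\tfrac{1}{2}$ times the sum of the half-characters of $\overline{s}|_{[0,n]}$ and $\overline{s}|_{[n,2n]}$. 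Now Proposition~\ref{prop: bijection preseves half-character} matches $\sum_t h_L^{(t)}$ to the half-character of $\overline{s}|_{[0,n]}$, and $R(\overline{p}) = \overline{q} = \overline{s}|_{[n,2n]}$ matches $\sum_t h_R^{(t)}$ to that of $\overline{s}|_{[n,2n]}$. This is precisely how the paper proceeds: it packages the $\omega_n$-coefficient of $\mathfrak{m}'(\overline{p})$ as the sum $k_n'(L(\overline{p})) + k_n'(R(\overline{p}))$ of total half-characters from the outset, avoiding any per-path comparison across the bijection.
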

\begin{proof}
    The bijection has been explained above. We verify the equation of associated weights. 
    
    Recall that for a tuple of NOP $\overline{p}$ of type $B_n$, its associated character \begin{equation}\label{eqn: mp}
    \mathfrak{m}'(\overline{p}) = \mathfrak{m}'(L(\overline{p})) + \mathfrak{m}'(R(\overline{p})) + k_n'(L(\overline{p})) + k_n'(R(\overline{p})),
    \end{equation}
    where $\mathfrak{m}'(L(\overline{p}))$ and $\mathfrak{m}'(R(\overline{p}))$ are the associated defined by upper corners and lower corners, which is a linear combination of $\omega_j$ ($1 \le j \le n-1$), and $k_n'(L(\overline{p}))$ (resp. $k_n'(R(\overline{p}))$) is the half-character at $n$ of $L(\overline{p})$ (resp. $R(\overline{p})$), which is a multiple of $\omega_n$.

    By Proposition~\ref{prop: bijection preseves half-character}, the bijection which maps $L(\overline{p})$ to $\overline{s}|_{[0,n]}$ preserves the associated character and the half-character at $n$. Therefore, $\mathfrak{m}'(L(\overline{p})) = \mathfrak{m}'(\overline{s}|_{[0,n]})$ and
    \[k_n'(L(\overline{p})) = k_n'(\overline{s}|_{[0,n]}) = \sum_{t=1}^T (b_{n-1}^{(t)} - b_n^{(t)}),\]
    where $(n-1,b_{n-1}^{(t)}), (n,b_n^{(t)}) \in s_t$, $1 \le t \le T$.

    The right branch $R(\overline{p})$ coincides with $\overline{s}|_{[n,2n]}$, thus 
    \[k_n'(R(\overline{p})) = k_n'(\overline{s}|_{[n,2n]}) = \sum_{t=1}^T (b_{n+1}^{(t)} - b_n^{(t)}),\]
    where $(n+1,b_{n+1}^{(t)})\in s_t$, $1 \le t \le T$.

    Recall that the multiplicity of $\omega_n$ in $\mathfrak{m}'(\overline{s})$ equals to
    \[\frac{1}{2} \sum_{t=1}^T (b_{n-1}^{(t)} - 2b_n^{(t)} + b_{n+1}^{(t)}),\]
    as we have seen in the proof of Proposition~\ref{prop: bijection preseves half-character}. Thus we have 
    \begin{equation}\label{eqn: ms}
        \mathfrak{m}'(\overline{s}) = \mathfrak{m}'(\overline{s}|_{[0,n]}) + \mathfrak{m}'(\overline{s}|_{[n,2n]}) + \frac{1}{2} (k_n'(L(\overline{p})) + k_n'(R(\overline{p})) ).
    \end{equation}
    
    The identity follows from \eqref{eqn: mp}, \eqref{eqn: ms} and the definition of $\varpi$ and $\Pi$.
\end{proof}

\begin{figure}[htp]
    \centering
\begin{tikzpicture}[scale=0.4]
\draw[step=1cm, gray, very thin] (0, -1) grid (10, 12);
\node at (0, 12.5) {0}; \node at (1, 12.5) {1};
\node at (2, 12.5) {2}; \node at (3, 12.5) {3};
\node at (4, 12.5) {4}; \node at (5, 12.5) {5}; \node at (6, 12.5) {6}; \node at (7, 12.5) {7}; \node at (8, 12.5) {8}; \node at (9, 12.5) {9}; \node at (10, 12.5) {10};

\node at (-1.4, 0) {k+8};
\node at (-1.5, 1) {k+7};
\node at (-1.4, 2) {k+6};
\node at (-1.4, 3) {k+5};
\node at (-1.4, 4) {k+4};
\node at (-1.4, 5) {k+3};
\node at (-1.4, 6) {k+2}; \node at (-1.4, 7) {k+1};
\node at (-1.4, 8) {k}; \node at (-1.4, 9) {k-1};
\node at (-1.4, 10) {k-2}; \node at (-1.4, 11) {k-3};

\draw[blue, thick] (0,0)-- (1,1)-- (2,2)--(3,3)--(4,4)--(5,3)--(6,4)--(7,3)--(8,2)--(9,1)--(10,0);

\draw[blue, thick] (0,4)-- (1,5)-- (2,6)--(3,7)--(4,6)--(5,5)--(6,6)--(7,5)--(8,4)--(9,3)--(10,2);

\draw[blue, thick] (0,10)-- (1,11)-- (2,10)--(3,9)--(4,8)--(5,7)--(6,8)--(7,9)--(8,8)--(9,7)--(10,6);

\node at (0,0) {$\bullet$};
\node at (1,1){$\bullet$}; 
\node at (2,2){$\bullet$}; 
\node at (3,3) {$\bullet$}; 
\node at (4,4) {$\bullet$};
\node at (5,3) {$\bullet$}; \node at (6,4) {$\bullet$}; \node at (7,3) {$\bullet$}; \node at (8,2) {$\bullet$}; \node at (9,1) {$\bullet$}; \node at (10,0) {$\bullet$}; 
\node at (0,4) {$\bullet$};\node at (1,5){$\bullet$}; 
\node at (2,6){$\bullet$}; 
\node at (3,7) {$\bullet$}; 
\node at (4,6) {$\bullet$};
\node at (5,5) {$\bullet$}; \node at (6,6) {$\bullet$}; \node at (7,5) {$\bullet$}; \node at (8,4) {$\bullet$}; \node at (9,3) {$\bullet$}; \node at (10,2) {$\bullet$}; 
\node at (0,10) {$\bullet$};
\node at (1,11){$\bullet$}; 
\node at (2,10){$\bullet$}; 
\node at (3,9) {$\bullet$}; 
\node at (4,8) {$\bullet$};
\node at (5,7) {$\bullet$}; \node at (6,8) {$\bullet$}; \node at (7,9) {$\bullet$}; \node at (8,8) {$\bullet$}; \node at (9,7) {$\bullet$}; \node at (10,6) {$\bullet$}; 

\begin{scope}[xshift=15cm]
\draw[step=1cm, gray, very thin] (0, -1) grid (18, 21);
\node at (0, 21.5) {0}; \node at (2, 21.5) {2};
\node at (4, 21.5) {4}; \node at (6, 21.5) {6};
\node at (8, 21.5) {8};\node at (9, 21.5) {9}; 
\node at (10, 21.5) {10}; \node at (12, 21.5) {12};
\node at (14, 21.5) {14};\node at (16, 21.5) {16}; \node at (18, 21.5) {18};

\node at (0, -1.5) {0}; \node at (2, -1.5) {1};
\node at (4, -1.5) {2}; \node at (6, -1.5) {3};
\node at (8, -1.5) {4}; \node at (9, -1.5) {5};
\node at (10, -1.5) {4}; \node at (12, -1.5) {3};
\node at (14, -1.5) {2};\node at (16, -1.5) {1};\node at (18, -1.5) {0};

\node at (-1.7, 0) {2k+18};
\node at (-1.7, 1) {2k+17};
\node at (-1.7, 2) {2k+16};
\node at (-1.7, 3) {2k+15};
\node at (-1.7, 4) {2k+14};
\node at (-1.7, 5) {2k+13};
\node at (-1.7, 6) {2k+12}; \node at (-1.7, 7) {2k+11};
\node at (-1.7, 8) {2k+10}; \node at (-1.55, 9) {2k+9};
\node at (-1.55, 10) {2k+8};\node at (-1.55, 11) {2k+7};\node at (-1.55, 12) {2k+6}; \node at (-1.55, 13) {2k+5}; \node at (-1.55, 14) {2k+4}; \node at (-1.55, 15) {2k+3}; \node at (-1.55, 16) {2k+2}; \node at (-1.55, 17) {2k+1}; \node at (-1.55, 18) {2k}; \node at (-1.55, 19) {2k-1}; \node at (-1.55, 20) {2k-2};

\draw[blue, thick] (0,16)--(2,18)--(4,16)--(6,18)--(8,16)--(9,14.7);
\draw[blue, thick] (9,16.7)--(10,18)--(12,20)--(18,14);

\node at (0,16) {$\bullet$}; 
\node at (2,18) {$\bullet$};
\node at (4,16) {$\bullet$};
\node at (6,18) {$\bullet$}; 
\node at (8,16) {$\bullet$};
\node at (9,14.7) {$\bullet$};
\node at (9,16.7) {$\bullet$};
\node at (10,18) {$\bullet$};
\node at (12,20) {$\bullet$};\node at (14,18) {$\bullet$};\node at (16,16) {$\bullet$}; \node at (18,14) {$\bullet$};  

\draw[blue, thick] (0,8)--(2,10)--(4,12)--(6,10)--(8,12)--(9,10.7);
\draw[blue, thick] (9,12.7)--(10,14)--(12,12)--(18,6);

\node at (0,8) {$\bullet$}; 
\node at (2,10) {$\bullet$};
\node at (4,12) {$\bullet$};
\node at (6,10) {$\bullet$}; 
\node at (8,12) {$\bullet$};
\node at (9,10.7) {$\bullet$};
\node at (9,12.7) {$\bullet$};
\node at (10,14) {$\bullet$};
\node at (12,12) {$\bullet$};\node at (14,10) {$\bullet$};\node at (16,8) {$\bullet$}; \node at (18,6) {$\bullet$};

\draw[blue, thick] (0,0)--(2,2)--(4,4)--(6,6)--(8,4)--(9,2.7);
\draw[blue, thick] (9,8.7)--(10,10)--(18,2);

\node at (0,0) {$\bullet$}; 
\node at (2,2) {$\bullet$};
\node at (4,4) {$\bullet$};
\node at (6,6) {$\bullet$}; 
\node at (8,4) {$\bullet$};
\node at (9,2.7) {$\bullet$};
\node at (9,8.7) {$\bullet$};
\node at (10,10) {$\bullet$};
\node at (12,8) {$\bullet$};\node at (14,6) {$\bullet$};\node at (16,4) {$\bullet$}; \node at (18,2) {$\bullet$};

\end{scope}

\end{tikzpicture}
\caption{Two triples of NOP $\overline{s}$ (left) and $\overline{p}$ (right) in correspondence. Note that $\overline{s}|_{[0,n]}$ corresponds to $L(\overline{p})$ as in Figure~\ref{fig: AB correspondence} and $\overline{s}|_{[n,2n]} = R(\overline{p})$.}
\label{fig: s and p in correspondence}
\end{figure}
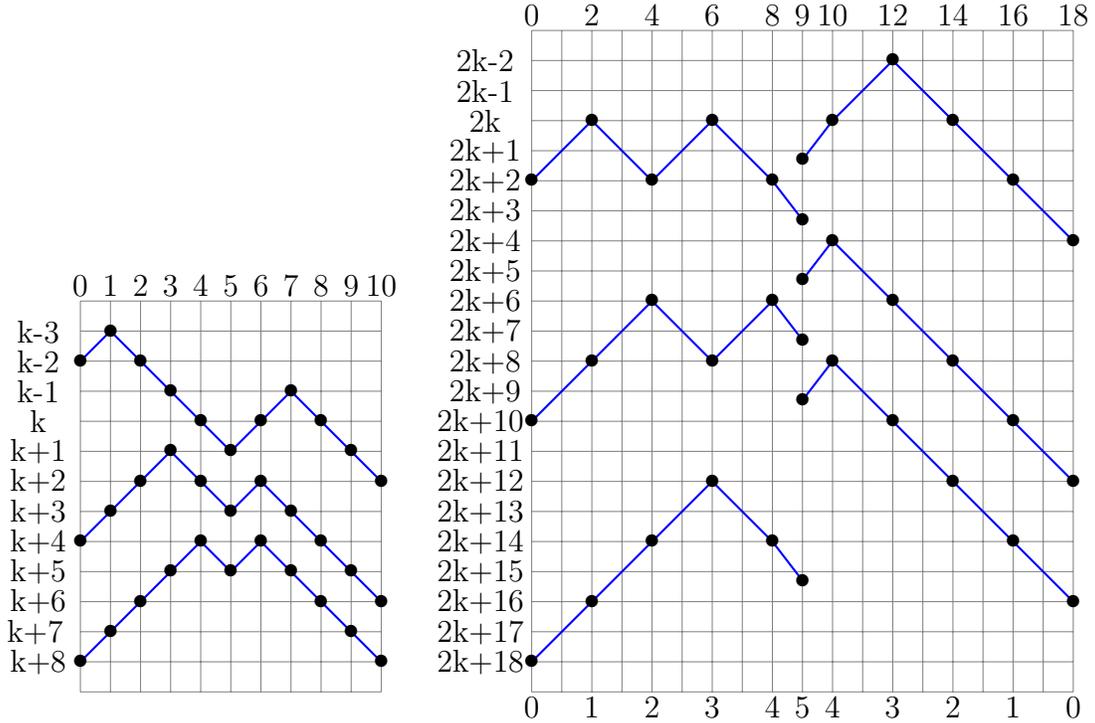

Combining all possible $\overline{q} = (R(p_1),\dots,R(p_T))$ and all $M \in \NN$, the following theorem is derived immediately from the the above proposition.

\begin{theorem}\label{thm: bijection PB and S}
    Given a shortened snake $(i_t,k_t)_{1 \le t \le T}$ of type $B_n$, there is a bijection between the set $\overline{\mathscr{P}}^B_{(i_t,k_t)_{1 \le t \le T}}$ and the set 
     \begin{equation}\label{eq: set S}
        \begin{split}
        \mathcal{S} := \{\overline{s} = (s_1,\dots,s_T)~\text{NOP of type}~A_{2n-1}~|~(2n,2n-i_t+\frac{k_t}{2}) \in s_t \\
        \text{and}~(0,y_0^{(t)})\in s_t~\text{with}~i_{t-1}+\frac{k_{t-1}}{2} < y_0^{(t)} \le  i_t + \frac{k_t}{2} , \forall 1 \le t \le T \},
        \end{split}
    \end{equation}
    where we view $i_0 + \frac{k_0}{2}$ as $-\infty$.

    Moreover, this bijection preserves the character in the sense that
    \[\Pi \circ \mathfrak{m}'(\overline{p}) = \varpi \circ \mathfrak{m}'(\overline{s})\]
    for all $\overline{p} \in \overline{\mathscr{P}}^B_{(i_t,k_t)_{1 \le t \le T}}$ and $\overline{s} \in \mathcal{S}$ in correspondence.
\end{theorem}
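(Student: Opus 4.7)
The plan is to derive this theorem as a global consequence of the local bijection already established in Proposition~\ref{prop: decomposition theorem}, by decomposing both sides into disjoint unions indexed by the right branch $\overline{q}$ and the gap $M$, then patching the pieces together. The key observation is that every $\overline{p} \in \overline{\mathscr{P}}^B_{(i_t,k_t)_{1 \le t \le T}}$ is uniquely determined by its right branch $\overline{q} = (R(p_1),\dots,R(p_T))$, which is a $T$-tuple of NOP of type $A_{n-1}$ with $(2n, 2n-i_t+\frac{k_t}{2}) \in q_t$, together with its gap $M = \mathrm{gap}(\overline{p}) \in \NN$ and the remaining data of $L(\overline{p})$. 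This gives the decomposition
\[
\overline{\mathscr{P}}^B_{(i_t,k_t)_{1 \le t \le T}} \;=\; \bigsqcup_{\overline{q}} \bigsqcup_{M \in \NN} \bigl\{\overline{p} \in L_{\overline{q}} \;\bigm|\; \mathrm{gap}(\overline{p}) = M\bigr\},
\]
where $\overline{q}$ runs over all $T$-tuples of type $A_{n-1}$ NOP with the prescribed right endpoints.

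Next, I would decompose the target set $\mathcal{S}$ in a parallel way. Every $\overline{s} \in \mathcal{S}$ splits canonically as $(\overline{s}|_{[0,n]}, \overline{s}|_{[n,2n]})$, where the right half $\overline{s}|_{[n,2n]}$ plays the role of $\overline{q}$, and the left half $\overline{s}|_{[0,n]}$ belongs to the set $A_{\frac{2i_1+k_1}{2},\dots,\frac{2i_T+k_T}{2};\,x^{(1)},\dots,x^{(T)};\,M}$ of Definition~\ref{def: sets A and B}, with $x^{(t)}$ the left endpoint of $q_t$ and $M$ determined by the left endpoints of the $s_t$. The endpoint interval $y_0^{(t)} \in \left]i_{t-1}+\frac{k_{t-1}}{2},\, i_t+\frac{k_t}{2}\right]$ appearing in the definition of $\mathcal{S}$ is exactly the interval $\left]x_0^{(t-1)}, x_0^{(t)}\right]$ from Definition~\ref{def: sets A and B} with $x_0^{(t)} = \frac{2i_t+k_t}{2}$, so
\[
\mathcal{S} \;=\; \bigsqcup_{\overline{q}} \bigsqcup_{M \in \NN} \mathcal{S}_{\overline{q}, M}.
\]

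With both decompositions in hand, Proposition~\ref{prop: decomposition theorem} provides a bijection between each pair of corresponding summands, and these bijections preserve character under the identification $\Pi \circ \mathfrak{m}'(\overline{p}) = \varpi \circ \mathfrak{m}'(\overline{s})$. Assembling the pieces yields a single bijection $\overline{\mathscr{P}}^B_{(i_t,k_t)_{1 \le t \le T}} \to \mathcal{S}$ with the asserted character-preserving property.

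The main thing to verify carefully is that the two index sets for the decompositions really match, i.e.\ that the constraints defining $\mathcal{S}$ correspond term-by-term to the constraints used in Proposition~\ref{prop: decomposition theorem}: that the endpoint intervals for $y_0^{(t)}$ agree with those coming from the set $A_{\ldots}$, that the non-overlap condition on the full $\overline{s}$ decomposes cleanly into non-overlap of $\overline{s}|_{[0,n]}$, non-overlap of $\overline{s}|_{[n,2n]} = \overline{q}$, and the joining condition $x^{(t)} = \lfloor (z_n^{(t)}+3)/2 \rfloor$ already built into $L_{\overline{q}}$, and that no $\overline{s} \in \mathcal{S}$ is missed by any choice of $(\overline{q}, M)$. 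Each of these is a direct bookkeeping check against the definitions; no new combinatorial input is needed beyond Proposition~\ref{prop: decomposition theorem}.
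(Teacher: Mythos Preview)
Your proposal is correct and follows essentially the same approach as the paper: decompose both $\overline{\mathscr{P}}^B_{(i_t,k_t)_{1 \le t \le T}}$ and $\mathcal{S}$ as disjoint unions indexed by the right branch $\overline{q}$ and the gap $M$, then apply Proposition~\ref{prop: decomposition theorem} piece by piece. The paper's proof is even terser than yours, simply writing down the two decompositions and declaring that the theorem follows immediately.
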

\begin{proof}
    Notice that
    \[\overline{\mathscr{P}}^B_{(i_t,k_t)_{1 \le t \le T}} = \bigsqcup_{\substack{\overline{q}~\text{such that}\\(2n,2n-i_t+\frac{k_t}{2}) \in q_t}} \bigsqcup_{M \in \NN} \{\overline{p} \in L_{\overline{q}}~|~\mathrm{gap}(\overline{p}) = M\},\]
    and 
    \begin{align*}
        \mathcal{S} = \bigsqcup_{\substack{\overline{q}~\text{such that}\\(2n,2n-i_t+\frac{k_t}{2}) \in q_t}} \bigsqcup_{M \in \NN} \mathcal{S}_{\overline{q},M}.
    \end{align*}
    The theorem follows from Proposition~\ref{prop: decomposition theorem} immediately.
\end{proof}

This theorem states that type $B_n$ paths are in $\overline{\mathscr{P}}^B_{(i_t,k_t)_{1 \le t \le T}}$ is in bijection with type $A_{2n-1}$ paths whose endpoints satisfy certain betweenness conditions, and the bijection preserves the corresponding weights. By applying Corollary~\ref{character-path}, these weights form characters of snake modules. Thus we can conclude our main theorem.

\begin{theorem}\label{thm: langlands branching rule}
    Let $(i_t,k_t)_{1 \le t \le T}$ be a shortened snake of type $B_n$. Let 
    $$m = \prod_{t:i_t \neq n}Y_{i_t,q^{k_t}} \prod_{t: i_t=n}Y_{n,q^{k_t-1}}Y_{n,q^{k_t+1}}$$ be the dominant monomial defined by the snake. Then
    \begin{equation}\label{eqn: decomposition equality}
        \Pi(\chi(L(m))) = \sum_{s_t} \chi^{\sigma}(L(\prod_{t=1}^T Z_{i_t - s_t,q^{\frac{k_t}{2}- s_t}})),
    \end{equation}
    where the sum on the right-hand side is taken over $T$-tuples of integers $(s_1,\dots,s_T)$ such that $0 \le s_t < \min( \frac{2i_t+k_t-2i_{t-1}-k_{t-1}}{4}, i_t+1)$, where a term $Y_{0,q^c}$ ($c \in \ZZ$) should be understood as $1$.

    In particular, the leading term, which corresponds to the $T$-tuple of integers $(s_1,\dots,s_T) = (0,\dots,0)$ is the Langlands dual representation that we obtained in Theorem~\ref{thm: langlands dual rep A to B}.

    Equivalently, we can rewrite \eqref{eqn: decomposition equality} in terms of multisegments
    \begin{equation}
    \Pi(\chi(L(m))) = \sum_{[\mathbf{l},\mathbf{r}]} \varpi (\chi(L([\mathbf{l}, \mathbf{r}]))),
    \end{equation}
    where the sum on the right-hand side is taken over all snake modules of type $A_{2n-1}$ such that $l_t = \frac{-2i_t+k_t+2n+2}{4}$ and $\frac{2i_{t-1}+k_{t-1}+2n+2}{4} < r_t \le \frac{2i_t+k_t+2n+2}{4}$, $\forall 1 \le t \le T$. Here $\frac{-2i_0+k_0+2n+2}{4}$ is understood as $-\infty$. In the formula, we shift all segments by $\frac{2n+2}{4}$ to ensure that $l_t,r_t$ are integers.

\end{theorem}
\begin{proof}
    Depending on the left endpoints of paths in \eqref{eq: set S}, we can decompose the set $\mathcal{S}$ into a disjoint union of type $A_{2n-1}$ NOP
    \[
    \mathcal{S} = \bigsqcup_{(u_t,v_t)_{1 \le t \le T}} \overline{\mathscr{P}}^A_{(u_t,v_t)_{1 \le t \le T}},
    \]
    where the index runs over type $A_{2n-1}$ snakes $(u_t,v_t)_{1 \le t \le T}$ such that 
    \[i_{t-1} + \frac{k_{t-1}}{2} < u_t + v_t \le i_t + \frac{k_t}{2}, \; \text{and} \; 2n-u_t+v_t = 2n - i_t + \frac{k_t}{2}. \]
    Equivalently, 
    \[\frac{2i_{t-1} +k_{t-1} +2i_t - k_t}{4} < u_t \le i_t, \; \text{and} \; v_t = u_t - i_t + \frac{k_t}{2}\]
    Here we use a slight abuse of notation in that the sequence $(u_t,v_t)_{1 \le t \le T}$ is not forced to have length $T$. A term $(u_t,v_t)$ should be omitted if $u_t  = 0$, and the remaining terms form a snake of length less than $T$.

    By the bijection constructed in Theorem~\ref{thm: bijection PB and S} between the set $\overline{\mathscr{P}}^B_{(i_t,k_t)_{1 \le t \le T}}$ and the set $\mathcal{S}$, we obtain a bijection between type $B_n$ NOP and the disjoint union of type $A_{2n-1}$ NOP. Hence we obtain the desired equality between characters.
\end{proof}

We close this section with two examples of the branching rule.

\begin{example}
    For a fixed $i \in I$, let $(i_t,k_t) = (i,4t)$, $1 \le t \le T$, be a shortened snake of type $B_n$. The associated snake module is 
    \[W^{(i)}_{T,q^4} = L(Y_{i,q^4}Y_{i,q^8}Y_{i,q^{12}}\cdots Y_{i,q^{4T}})~\text{if}~i \neq n,\]
    \[W^{(n)}_{2T,q^3} = L(Y_{n,q^3}Y_{n,q^5}Y_{n,q^7}\cdots Y_{i,q^{4T+1}})~\text{if}~i = n,\]
    which are known as Kirillov-Reshetikhin modules.

    Then the Langlands branching rule states that
    \[\Pi(\chi(W^{(i)}_{T,q^4})) = \sum_{s=0}^{i} \chi^{\sigma}(L(Z_{i-s,q^{2-s}}Z_{i,q^4}Z_{i,q^6}\cdots Z_{i,q^{2T}})),\]
    where $Z_{0,q^{2-i}}$ in the last term is understood as $1$.

    Similarly,
    \[\Pi(\chi(W^{(n)}_{2T,q^3})) = \sum_{s=0}^{n} \chi^{\sigma}(L(Z_{n-s,q^{2-s}}Z_{n,q^4}Z_{n,q^6}\cdots Z_{n,q^{2T}})),\]
    where $Z_{0,q^{2-n}}$ in the last term is understood as $1$.
\end{example}

It is possible that two non-isomorphic modules have the same usual character. In this case, the branching rule above will provide two different decompositions of the same character.
\begin{example}
    Let $\lambda = \sum_{i=1}^n \lambda_i \omega_i \in P^+ \cap P'$. Consider the two minimal affinizations of $V(\lambda)$ of type $B_n^{(1)}$ \cite{chari1995minimalnonsimp}, $L(m)$ and $L(m')$, where 
    \[m = \prod_{i=1}^{n-1} \prod_{k=1}^{\lambda_i} Y_{i,q^{4(\lambda_1 + \cdots + \lambda_{i-1}) + 2(i-1) +4k}} \times \prod_{l=1}^{\lambda_n} Y_{n, q^{4(\lambda_1 + \cdots + \lambda_{n-1}) + 2(n-1) +1 + 2l} },\]
    and
    \[m' = \prod_{i=1}^{n-1} \prod_{k=1}^{\lambda_i} Y_{i,q^{-4(\lambda_1 + \cdots + \lambda_{i-1}) - 2(i-1) -4k}} \times \prod_{l=1}^{\lambda_n} Y_{n, q^{-4(\lambda_1 + \cdots + \lambda_{n-1}) - 2(n-1) -1 - 2l} }.\]
    It is known that $\chi(L(m)) = \chi(L(m'))$.

    To simplify the formula, we will use $a_i :  = \lambda_1 + \cdots + \lambda_{i-1}$ in the following.

    Note that the snake corresponding to $m$ starts with $(i_1,k_1) = (1,4)$ and ends with $(i_T,k_T) = ( n,4a_{n} + 2(n-1) +1 + 2\lambda_n )$, while the snake corresponding to $m'$ starts with $(i_1,k_1) = (n, -4a_{n} - 2(n-1) -1 - 2\lambda_n )$ and ends with $(1,-4)$.
    
    The branching rule of $\chi(L(m))$ reads
    \[\Pi(\chi(L(m))) = \sum_{\lambda'} \chi^{\sigma}(L(\prod_{i=1}^{n} \prod_{k=1}^{\lambda'_i} Z_{i,q^{2\delta + 2(\lambda'_1 + \cdots + \lambda'_{i-1}) + (i-1) +2k}})), \]
    where the sum is taken over non-negative integers $(\lambda'_1,\dots,\lambda'_n)$ such that 
    \[\sum_{k=j}^{n} \lambda_j' \in \{ \sum_{k=j}^{n-1} \lambda_j + \frac{\lambda_n}{2}, \sum_{k=j}^{n-1} \lambda_j + \frac{\lambda_n}{2} - 1 \},\] 
    for all $1 \le j \le n$. Here we have a universal shift parameter $\delta : = \sum_{k=1}^{n-1} \lambda_j + \frac{\lambda_n}{2} - \sum_{k=1}^{n} \lambda_j'$ in the formula, which in fact has no impact on the character.

    Note that each summand in the decomposition is the folding of a minimal affinization of type $A_{2n-1}^{(1)}$, which has the same character as the classical Lie algebra representation as evaluation representations. Therefore, we can rewrite the formula as 
    \[\Pi(\chi(L(m))) = \sum_{\lambda'} \varpi \circ \chi(V(\lambda')),\]
    where $V(\lambda')$ is the classical Lie algebra $\mathfrak{sl}_{2n}$-module with highest weight $\lambda' = \lambda_1' \omega_1 + \cdots \lambda_n' \omega_n$.

    On the other hand, the branching rule of $\chi(L(m'))$ has only $(\max \{j~|~\lambda_j \neq 0\} + 1)$ terms in the decomposition. When $\lambda_n \neq 0$, we have
    \[\Pi(\chi(L(m'))) = \sum_{s=0}^n \chi^{\sigma} (L(\prod_{i=1}^{n-1} \prod_{k=1}^{\lambda_i} Z_{i,q^{-2a_{i} - (i-1) -2k}} \times \prod_{l=1}^{\lambda_n/2 -1} Z_{n, q^{-2a_{n} - (n-1) - 2l} } \times Z_{n-s,q^{-2a_{n} - (n-1) - 2l-s}})).\]
    When $N = \max \{j~|~\lambda_j \neq 0\} < n$, we have 
    \[\Pi(\chi(L(m'))) = \sum_{s=0}^N \chi^{\sigma} (L(\prod_{i=1}^{N-1} \prod_{k=1}^{\lambda_i} Z_{i,q^{-2a_{i} - (i-1) -2k}} \times \prod_{l=1}^{\lambda_N -1} Z_{N, q^{-2a_{N} - (N-1) - 2l} } \times Z_{N-s,q^{-2a_{N} - (N-1) - 2l-s}})).\]
    
    While each term is a snake module, it is not a minimal affinization in general.
\end{example}

\section{Langlands dual from \texorpdfstring{$A_{2n-1}^{(2)}$}{} to \texorpdfstring{$B_n^{(1)}$}{}}\label{sec:A to B}
Having established the Langlands branching rule from type $B_n^{(1)}$ to type $A_{2n-1}^{(2)}$, we now turn to the question of Langlands duality in the opposite direction, namely from $A_{2n-1}^{(2)}$ to $B_n^{(1)}$.

In \cite[Theorem~6.8]{frenkel2011langlandsfinite}, it is also proved that Kirillov-Reshetikhin modules of twisted type admit Langlands dual representations, using the interpolating $(q,t)$-characters. In this section, we provide an alternative proof of \cite[Theorem~6.8]{frenkel2011langlandsfinite} for the case of type $A_{2n-1}^{(2)}$ to type $B_n^{(1)}$, using the path description. 


As above, we denote by $\Uqghat$ the non-twisted quantum affine algebra of type $B_n^{(1)}$ and $\Uqghatdual$ the twisted quantum affine algebra of type $A_{2n-1}^{(2)}$.

\begin{definition}
    Recall that $P$ is the weight lattice of type $B_n$, $\leftindex^L{P}$ is the dual to the root lattice of type $B_n$. 
    
    Let $\leftindex^L {P}'$ be the sublattice of $\leftindex^L {P}$ defined by $\leftindex^L {P}' = \bigoplus_{i \in I} d_i \check{\omega}_{\bar{i}}$, where $d_i = 2$ for $i \neq n$, and $d_n = 1$.
    
    There is a bijective linear map
    \[\leftindex^L{P}' \to P, \quad d_i \check{\omega}_{\bar{i}} \mapsto \omega_i, \forall i \]
    and we extend it to a surjective map
    \[\leftindex^L {\Pi} :\leftindex^L{P} \to P\]
    such that $\leftindex^L {\Pi}(\lambda) = 0$ if $\lambda \notin \leftindex^L {P}'$.
\end{definition}

We remark that, unlike the weight lattice of type $B_n$, where the root lattice $R \subset P'$ is a sublattice of $P'$, in the dual case, $\leftindex^L{R}$ is not a sublattice of $\leftindex^L{P}'$. Therefore, the definition must be extended to the entire lattice $\leftindex^L{P}$ so that the expression $\leftindex^L{\Pi}(\chi^{\sigma}(V))$ makes sense.

\begin{lemma}\label{lemma: tensor irreducible}
    For $i \in I$ and $T \in \NN^*$, the tensor product of Kirillov-Reshetikhin modules of $\Uqghatdual$
    \[L(Z_{i,a}Z_{i,aq^2}\cdots Z_{i,aq^{2T-2}})\otimes L(Z_{i,-a}Z_{i,-aq^2}\cdots Z_{i,-aq^{2T-2}})\]
    is irreducible.
\end{lemma}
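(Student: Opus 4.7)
The plan is to verify the standard criterion that $L(m_1) \otimes L(m_2)$ is irreducible if and only if $m_1 m_2$ is the unique dominant monomial in $\chi_q^{\sigma}(L(m_1)) \cdot \chi_q^{\sigma}(L(m_2))$. I would split into two cases according to whether $i = \bar n$ or not.

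In the case $i = \bar n$, the identification $Z_{\bar n, c} = Z_{\bar n, -c}$ in $\mathcal{Z}$ forces $m_1 = m_2$, so the tensor product reduces to $L(m_1)^{\otimes 2}$; irreducibility then follows from the real-ness of Kirillov--Reshetikhin modules, a standard property that holds for both twisted and non-twisted quantum affine algebras.

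In the case $i \neq \bar n$, I would lift to the non-twisted algebra $\Uqghat$ of type $A_{2n-1}^{(1)}$ via the folding homomorphism $\pi$: choose pre-images $M_1' = \prod_{k=0}^{T-1} Y_{i, aq^{2k}}$ and $M_2' = \prod_{k=0}^{T-1} Y_{2n-i, aq^{2k}}$ (identifying $i \in I$ with its unbarred representative in $\{1, \ldots, n\}$), so that \eqref{eqn: folding pi} gives $\pi(M_j') = m_j$ and each $L(M_j')$ is a Kirillov--Reshetikhin, hence snake, module of $\Uqghat$. The folding property for $q$-characters of snake modules yields $\chi_q^{\sigma}(L(m_j)) = \pi(\chi_q(L(M_j')))$, reducing the problem to the corresponding uniqueness statement in the non-twisted setting. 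Using the Mukhin--Young path description (Theorem~\ref{path description}) for each $\chi_q(L(M_j'))$, one verifies that any pair $(N_1, N_2) \neq (M_1', M_2')$ introduces, through its $A^{-1}$-lowering factors, a negative $Y$-exponent pattern at nodes surrounding $i$ or $2n - i$ that cannot combine into a dominant monomial.

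To push the uniqueness through $\pi$, observe that since both $M_1'$ and $M_2'$ have spectral support in the single orbit $aq^{\mathbb{Z}}$, every monomial of $\chi_q(L(M_1')) \cdot \chi_q(L(M_2'))$ has zero exponent on all $Y_{j, c}$ with $c \notin aq^{\mathbb{Z}}$; consequently, the twisted dominance condition for $\pi(N)$ reduces to the non-twisted dominance of $N$, and $m_1 m_2$ is the unique dominant monomial in the twisted product. The hard part will be the combinatorial verification of uniqueness in the non-twisted setting when $i$ is close to $n$: $A^{-1}$-lowering factors from both $\chi_q(L(M_1'))$ and $\chi_q(L(M_2'))$ can simultaneously introduce $Y$-contributions at the short-root node $n$, and ruling out cancellations that would yield extra dominant monomials requires careful use of the shape constraints on paths of type $A_{2n-1}$ (Definition~\ref{def: paths}).
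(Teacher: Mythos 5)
Your overall strategy---fold to type $A_{2n-1}^{(1)}$ and count dominant monomials---has the same skeleton as the paper's proof, and one ingredient of your plan is genuinely fine: since every monomial occurring in $\chi_q(L(M_1'))\chi_q(L(M_2'))$ is supported on the single orbit $aq^{\ZZ}$, and $aq^{\ZZ}\cap(-a)q^{\ZZ}=\emptyset$ because $q$ is not a root of unity, the folding map \eqref{eqn: folding pi} sends distinct monomials of this support to distinct monomials of $\mathcal{Z}$ and $\pi(N)$ is dominant exactly when $N$ is (this is also true at the node $\bar n$, where the identification $Z_{\bar n,c}=Z_{\bar n,-c}$ cannot create cancellation for the same support reason). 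The genuine gap is that the heart of the argument---that $m_1m_2$ is the \emph{unique} dominant monomial of the non-twisted product $\chi_q(L(Y_{i,a}\cdots Y_{i,aq^{2T-2}}))\,\chi_q(L(Y_{2n-i,a}\cdots Y_{2n-i,aq^{2T-2}}))$---is only asserted (``one verifies\dots'') and then explicitly deferred as ``the hard part.'' This is exactly the input the paper does not reprove: it quotes the known fact that this non-twisted tensor product is irreducible and special, which rests on the right-negativity/minuscule properties of Kirillov--Reshetikhin $q$-characters \cite{hernandez2006kirillov}, and then transports the conclusion through $\pi$ via $\pi(\chi_q(V))=\chi_q^{\sigma}(L(\pi(m_V)))$ as in \cite[Theorem~4.15]{hernandez2010kirillov}, concluding because the twisted tensor product and its simple subquotient $L(m_1m_2)$ have equal $q$-characters. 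As written, your proposal replaces a citable lemma by an uncompleted path-combinatorics verification, so it is a plan rather than a proof; to close it you must either carry out that verification (nontrivial, especially near the node $n$---which, incidentally, is not a short-root node: type $A_{2n-1}$ has no short roots, it is merely the fixed node of the diagram involution) or simply invoke the known specialness/irreducibility statement as the paper does.

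Two smaller points. Your opening criterion should be stated only as a sufficient condition: if the product of $q$-characters has a unique dominant monomial (with multiplicity one), the tensor product is irreducible; the converse is false in general, so drop the ``if and only if.'' Second, in the case $i=\bar n$ you appeal to realness of \emph{twisted} Kirillov--Reshetikhin modules as ``standard''; this is an additional unproven input that the paper avoids altogether, since its argument treats $i=n$ uniformly by deriving the twisted statement from the non-twisted one (where irreducibility of the tensor square is the known realness of non-twisted KR modules, and specialness of the square follows from the same right-negativity argument). If you keep the case split you need a proof or precise reference for twisted realness; if you argue uniformly as the paper does, the case split disappears.
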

\begin{proof}
    It is well-known that for $1 \le i \le n$ the representation
    \[L(Y_{i,a}Y_{i,aq^2}\cdots Y_{i,aq^{2T-2}})\otimes L(Y_{2n-i,a}Y_{2n-i,aq^2}\cdots Y_{2n-i,aq^{2T-2}})\]
    of the quantum affine algebra of type $A_{2n-1}^{(1)}$ is irreducible. Moreover, it is special in the sense that its $q$-character has a unique dominant monomial. This is due to the fact that Kirillov-Reshetikhin modules are right-minuscule, see, for example, \cite{hernandez2006kirillov}. As a consequence, the tensor product is isomorphic to the irreducible $l$-highest weight module 
    \[V := L(Y_{i,a}Y_{i,aq^2}\cdots Y_{i,aq^{2T-2}}Y_{2n-i,a}Y_{2n-i,aq^2}\cdots Y_{2n-i,aq^{2T-2}}).\]

    For the same reason as in \cite[Theorem~4.15]{hernandez2010kirillov},
    \[\pi(\chi_q(V))= \chi_q^{\sigma}(L(Z_{i,a}Z_{i,aq^2}\cdots Z_{i,aq^{2T-2}}Z_{i,-a}Z_{i,-aq^2}\cdots Z_{i,-aq^{2T-2}})).\]

    The module on the right-hand side is a subquotient of the tensor product 
    \[L(Z_{i,a}Z_{i,aq^2}\cdots Z_{i,aq^{2T-2}})\otimes L(Z_{i,-a}Z_{i,-aq^2}\cdots Z_{i,-aq^{2T-2}}),\]
    and they have the same $q$-character. Therefore, the tensor product is isomorphic to 
    \[L(Z_{i,a}\cdots Z_{i,aq^{2T-2}} Z_{i,-a}\cdots Z_{i,-aq^{2T-2}}),\]
    thus irreducible.
\end{proof}

These modules are called generalized Kirillov-Reshetikhin modules in \cite[Section~6.3]{frenkel2011langlandsfinite}. 

\begin{definition}
    A \textit{generalized Kirillov-Reshetikhin module} of $\Uqghatdual$ is 
    \[W_{T,a}^{(i)} = L(Z_{i,a}\cdots Z_{i,aq^{2T-2}} Z_{i,-a}\cdots Z_{i,-aq^{2T-2}}), \quad \text{$T \in \NN^*$, $a \in \CC^*$}.\]
\end{definition}

By the above lemma, the path description of type $A$ implies that a monomial in the $q$-character $\chi_q(W^{(i)}_{T,a})$ corresponds to a pair of $T$-tuples of NOP.

\begin{proposition}
    Let $\overline{\mathscr{P}}^A_{1}$ be the set of $T$-tuples of NOP associated to the snake 
    $$(i,k)(i,k+2)\cdots(i,k+2T-2).$$ 
    Let $\overline{\mathscr{P}}^A_{2}$ be the set of $T$-tuples of NOP associated to the snake 
    $$(2n-i,k)(2n-i,k+2)\cdots(2n-i,k+2T-2).$$ 
    Then
    \[\chi_q(W^{(i)}_{T,q^k}) = \sum_{(\overline{p}_1,\overline{p}_2) \in \overline{\mathscr{P}}^A_{1} \times \overline{\mathscr{P}}^A_{2}} \mathfrak{m}(\overline{p}_1) \mathfrak{m}(\overline{p}_2).
    \]
\end{proposition}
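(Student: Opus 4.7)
The plan is to deduce the identity directly from Lemma~\ref{lemma: tensor irreducible} and the path description of snake modules in type $A_{2n-1}^{(1)}$ (Theorem~\ref{path description}); no new combinatorial input is required beyond multiplying two already-known $q$-character formulas together.

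First, I would invoke the proof of Lemma~\ref{lemma: tensor irreducible}. The key output there is that the non-twisted type $A_{2n-1}^{(1)}$ module
\[
V := L\bigl(Y_{i,q^{k}}Y_{i,q^{k+2}}\cdots Y_{i,q^{k+2T-2}}\bigr) \,\otimes\, L\bigl(Y_{2n-i,q^{k}}Y_{2n-i,q^{k+2}}\cdots Y_{2n-i,q^{k+2T-2}}\bigr)
\]
is irreducible, and that under the folding map $\pi$ one has $\pi(\chi_q(V)) = \chi_q^{\sigma}(W^{(i)}_{T,q^k})$ (so the proposition is really a statement about $\chi_q(V)$, which is what the right-hand sum computes before applying $\pi$).

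Next, I would observe that each tensor factor of $V$ is a Kirillov–Reshetikhin module, hence a snake module of type $A_{2n-1}^{(1)}$ in the sense of Definition~\ref{def: snake modules}. The snake associated to the first factor is precisely $(i,k)(i,k+2)\cdots(i,k+2T-2)$, and the one associated to the second factor is $(2n-i,k)(2n-i,k+2)\cdots(2n-i,k+2T-2)$. Applying Theorem~\ref{path description} to each factor separately yields
\[
\chi_q\bigl(L(Y_{i,q^{k}}\cdots Y_{i,q^{k+2T-2}})\bigr) = \sum_{\overline{p}_1 \in \overline{\mathscr{P}}^A_{1}} \mathfrak{m}(\overline{p}_1), \qquad \chi_q\bigl(L(Y_{2n-i,q^{k}}\cdots Y_{2n-i,q^{k+2T-2}})\bigr) = \sum_{\overline{p}_2 \in \overline{\mathscr{P}}^A_{2}} \mathfrak{m}(\overline{p}_2).
\]

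Finally, since $\chi_q$ is a ring homomorphism on the Grothendieck ring, $\chi_q(V_1\otimes V_2)=\chi_q(V_1)\,\chi_q(V_2)$. Multiplying the two sums above and distributing gives the claimed double-sum formula. There is no substantial obstacle: the only points requiring care are (i) verifying that the two shortened snakes are indeed in snake position (trivial, since each is constant in the first coordinate with gaps of $2$ in the second, satisfying $k_{t+1}-k_t = 2 \ge |i-i|+2$), and (ii) noting that the use of $\chi_q$ on the left-hand side must be interpreted through the identification $\chi_q^{\sigma}(W^{(i)}_{T,q^k}) = \pi(\chi_q(V))$, so that the path-monomials on the right genuinely encode the representation after folding.
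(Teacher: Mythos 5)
Your proposal is correct and matches the paper's intended argument: the paper derives this proposition directly from Lemma~\ref{lemma: tensor irreducible} (irreducibility of the tensor product and the folding identity $\pi(\chi_q(V))=\chi_q^{\sigma}(W^{(i)}_{T,q^k})$) together with the path description of each Kirillov--Reshetikhin factor and multiplicativity of $\chi_q$, exactly as you do. Your remark that the left-hand side must be read through the folding identification is the same implicit convention the paper uses.
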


Our goal is to construct an injective map from $\overline{\mathscr{P}}^B_{(i_t,k_t)}$ associated to the snake 
\[(i_t,k_t)_{1 \leq t \leq T} = \big( (i,2k)(i,2k+4)\cdots(i,2k+4T-4) \big)\] to $\overline{\mathscr{P}}^A_{1} \times \overline{\mathscr{P}}^A_{2}$.

As above, we begin with the construction for a single path, then we pass to a $T$-tuple of NOP.

\begin{definition}
    Let $p \in \mathscr{P}^B_{i,2k}$, $(i,2k) \in \mathcal{X}^B$. We define a pair of paths in $\mathscr{P}^A_{i,k} \times \mathscr{P}^A_{2n-i,k}$ in the following way.

    Write 
        \begin{align*}
           && p = \big( (0,y_0), &(2,y_1), \dots, (2n-4, y_{n-2}), (2n-2,y_{n-1}), (2n-1,y_n), \\
           && &(2n-1,z_n), (2n,z_{n-1}),\dots,(4n-4,z_1),(4n-2,z_0) \big).
        \end{align*}

    Let $j_0 = \max\{j| 1 \le j \le n, y_{j-1} \le z_{j-1}+2\}$. Such a $j_0$ exists, because $y_0 = 2i+2k$, $z_0 = 4n-2i+2k-2$, thus $y_0 \le z_0 +2$.

    Define $G(p)_L$ to be the path in $\mathscr{P}^A_{i,k}$:
    \[G(p)_L = \big( (0,x_0),(1,x_1),\dots,(2n,x_{2n}) \big)\]
    such that 
    \[x_j = 
    \begin{cases}
        y_j/2, &\text{if}~0 \le j \le n-1,\\
        \lfloor (y_n +1)/2 \rfloor, &\text{if}~j=n, \\
        y_{2n-j}/2, &\text{if}~n+1 \le j \le 2n-j_0,\\
        (z_{2n-j}+2)/2, &\text{if}~2n-j_0+1 \le j \le 2n.
    \end{cases}
    \]

    Similarly, define $G(p)_R$ to be the path in $\mathscr{P}^A_{2n-i,k}$:
    \[G(p)_R = \big( (0,x_0'),(1,x_1'),\dots,(2n,x_{2n}') \big)\]
    such that 
    \[x_j' = 
    \begin{cases}
        (z_j+2)/2, &\text{if}~0 \le j \le n-1,\\
        \lfloor (z_n +3)/2 \rfloor, &\text{if}~j=n, \\
        (z_{2n-j}+2)/2, &\text{if}~n+1 \le j \le 2n-j_0,\\
        y_{2n-j}/2, &\text{if}~2n-j_0+1 \le j \le 2n.
    \end{cases}
    \]

    Denote by $G(p)$ the pair of paths $(G(p)_L,G(p)_R)$. 
\end{definition}

\begin{lemma}
    We show that this defines a map 
    \[G: \mathscr{P}^B_{i,2k} \to \mathscr{P}^A_{i,k} \times \mathscr{P}^A_{2n-i,k}.\]
\end{lemma}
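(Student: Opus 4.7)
The plan is to check two things for each of $G(p)_L$ and $G(p)_R$: that the endpoints match the defining data of $\mathscr{P}^A_{i,k}$ and $\mathscr{P}^A_{2n-i,k}$ respectively, and that consecutive second coordinates differ by exactly $\pm 1$. Endpoint verification is an immediate calculation: substituting $y_0=2i+2k$ and $z_0=4n-2i+2k-2$ into the formulas yields $x_0=i+k$, $x_{2n}=2n-i+k$ for $G(p)_L$, and the dual values for $G(p)_R$. The required parity condition $(i,k),(2n-i,k)\in \mathcal{X}^A$ follows directly from $(i,2k)\in\mathcal{X}^B$.

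The internal check splits along the three clauses of the piecewise definition of $x_j$. On each of the pure ranges $0\le j\le n-1$, $n+1\le j\le 2n-j_0-1$, and $2n-j_0+1\le j\le 2n-1$, adjacent differences are obtained by halving a $\pm 2$ step of the type $B$ path, giving $\pm 1$ immediately. At the two ``middle'' transitions $j=n-1\to n$ and $j=n\to n+1$, I would compute $\lfloor(y_n+1)/2\rfloor$ directly from $y_n=y_{n-1}\pm(1+\epsilon)$ and $0<\epsilon<1/2$; both subcases yield $\pm 1$ as needed.

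The main obstacle is the remaining transition $j=2n-j_0\to 2n-j_0+1$ (when $j_0<n$), where $G(p)_L$ jumps from $y_{j_0}/2$ to $(z_{j_0-1}+2)/2$. One must show $(z_{j_0-1}+2-y_{j_0})/2\in\{-1,+1\}$. My key observation would be a mod-$4$ invariant: each step of the type $B$ path changes each of $y_j,z_j$ by $\pm 2$, so $y_j-z_j$ is constant modulo $4$, and since $y_0-z_0=4(i-n)+2\equiv 2\pmod 4$, this congruence persists at every index. The maximality of $j_0$ supplies $y_{j_0-1}\le z_{j_0-1}+2$ together with $y_{j_0}>z_{j_0}+2$; a four-case analysis on the signs of the individual steps $y_{j_0}-y_{j_0-1},z_{j_0}-z_{j_0-1}\in\{\pm 2\}$ eliminates three cases by inequality and the congruence pins the surviving case to $y_{j_0-1}=z_{j_0-1}+2$, $y_{j_0}=y_{j_0-1}+2$, $z_{j_0}=z_{j_0-1}-2$. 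Plugging in yields difference $-1$. The boundary case $j_0=n$ uses the same mod-$4$ congruence combined with $y_n>z_n$ (which forces $y_{n-1}-z_{n-1}\ge -2$) to reduce $y_{n-1}-z_{n-1}$ to $\pm 2$, after which the floor computation closes the case.

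Finally, $G(p)_R$ is handled by the symmetric argument: the formulas for $x'_j$ mirror those for $x_j$ with the roles of $y$ and $z$ swapped, and the floor adjusted by an extra $+1$ to account for the shift $z_j\mapsto z_j+2$. The same three pure ranges, the same two middle-floor transitions, and the same mod-$4$/maximality case analysis at $j_0$ go through; the surviving case at the critical transition now yields difference $+1$, confirming $G(p)_R\in\mathscr{P}^A_{2n-i,k}$.
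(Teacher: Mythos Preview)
Your proof is correct and follows essentially the same core idea as the paper: use the congruence $y_j-z_j\equiv 2\pmod 4$ together with the maximality of $j_0$ to control the single nontrivial transition in $G(p)_L$ (and symmetrically in $G(p)_R$). The paper phrases this as the identity $y_{j_0-1}=z_{j_0-1}+2$, after which the check that $G(p)_L$ and $G(p)_R$ are paths is declared ``easy''; you instead verify each transition explicitly, which is more detailed but amounts to the same argument.

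In fact your treatment of the boundary case $j_0=n$ is more careful than the paper's. The paper asserts that $y_n>z_n$ forces $0\le y_{n-1}-z_{n-1}-2<2\epsilon$ and hence $y_{n-1}-z_{n-1}=2$, but this is not quite right: the value $y_{n-1}-z_{n-1}=-2$ is also possible (e.g.\ in type $B_2$ with $i=1$, take $y_0=2$, $y_1=0$, $y_2=1+\epsilon$, $z_0=4$, $z_1=2$, $z_2=1-\epsilon$). Your approach correctly allows both values $\pm 2$ and closes each with the floor computation; the lemma itself remains true, so the paper's slip is harmless for the final statement.
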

\begin{proof}
    Let $j_0 = \max\{j| 1 \le j \le n, y_{j-1} \le z_{j-1}+2\}$. We claim that $y_{j_0-1} = z_{j_0-1} +2$.
    
    If $j_0 \neq n$, then $\lvert y_{j_0} - y_{j_0-1} \rvert = 2$ and $\lvert z_{j_0} -z_{j_0-1} \rvert =2$. Thus $y_{j_0} > z_{j_0}+2$ implies that $0 \leq y_{j_0-1} - z_{j_0-1}-2 < 4$. Notice that $\forall j \neq n$, $\lvert y_j - z_j \rvert \equiv 2 \pmod 4$, thus the difference $y_{j_0-1} - z_{j_0-1}-2$ must be $0$.

    If $j_0 = n$, then $\lvert y_{n-1} - y_n \rvert = 1+\epsilon$ and $\lvert z_{n-1} - z_n \rvert = 1+\epsilon$. Thus $y_n > z_n$ implies that $0 \leq y_{n-1} - z_{n-1}-2 < 2\epsilon$. The difference is an integer, thus must be $0$.
    
    Therefore, the point $(2n-j_0+1,(z_{j_0-1}+2)/2)$ in $G(p)_L$ and $G(p)_R$ can also be written as $(2n-j_0+1,y_{j_0-1}/2)$.

    Then it is easy to check that $G(p)_L$ and $G(p)_R$ are type $A_{2n-1}$ paths in $\mathscr{P}^A_{i,k}$ and $\mathscr{P}^A_{2n-i,k}$, respectively.
\end{proof}

The following proposition is a direct consequence of the definition.
\begin{proposition}
    The map $G: \mathscr{P}^B_{i,2k} \to \mathscr{P}^A_{i,k} \times \mathscr{P}^A_{2n-i,k}$ is injective.
\end{proposition}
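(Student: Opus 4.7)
The plan is to prove injectivity by explicitly reconstructing $p$ from the pair $(G(p)_L, G(p)_R)$, using essentially only the first $n+1$ positions of each image path; the positions past $n$ turn out to carry no extra information.

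First, I would observe that for $0 \le j \le n-1$ the coordinate $y_j$ of the left branch is directly recovered as $y_j = 2x_j$, where $x_j$ is the $j$-th value of $G(p)_L$, and similarly $z_j = 2x_j' - 2$ from the $j$-th value of $G(p)_R$. This immediately recovers all coordinates of both branches of $p$ except $y_n$ and $z_n$, and in particular makes $y_{n-1}$ and $z_{n-1}$ known.

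Next, to recover $y_n$, I would use that $y_{n-1}$ is an even integer (since $y_0 = 2i + 2k$ is even and the left branch moves by $\pm 2$ between consecutive positions) and that $y_n \in \{y_{n-1} + 1 + \epsilon,\ y_{n-1} - 1 - \epsilon\}$. A direct calculation of $\lfloor (y_n+1)/2 \rfloor$ in each case (using $0 < \epsilon < 1/2$) yields $y_{n-1}/2 + 1$ in the first case and $y_{n-1}/2 - 1$ in the second. Hence the value of $G(p)_L$ at position $n$ distinguishes the two possibilities and pins down $y_n$. An identical argument, using that $z_{n-1}$ is even and $z_n \in \{z_{n-1} + 1 + \epsilon,\ z_{n-1} - 1 - \epsilon\}$, recovers $z_n$ from $G(p)_R$ at position $n$. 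Hence $p$ is fully reconstructed, so $G$ is injective.

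The main obstacle — quite mild in this case — is only the floor-function case analysis at position $n$, which requires checking the parity/$\epsilon$ arithmetic carefully. Everything else is a direct read-off from the definition. As a consistency check, I note that the values at positions $n+1, \ldots, 2n$ of $G(p)_L$ and $G(p)_R$ are entirely determined by $y_0, \ldots, y_n, z_0, \ldots, z_n$ (indeed they equal shifted copies of these), which is reflected in the definition by the appearance of the single common cutoff $j_0$ shared by both $G(p)_L$ and $G(p)_R$; this is precisely why the second halves of the two image paths encode redundant information and are not needed for inversion.
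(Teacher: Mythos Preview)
Your proof is correct and follows essentially the same approach as the paper: the paper simply observes that $G(p)_L$ determines the left branch of $p$ and $G(p)_R$ determines the right branch, which is exactly what you verify in explicit detail (including the floor-function case analysis at position $n$, which the paper leaves implicit).
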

\begin{proof}
    For two paths $p$ and $p'$ in $\mathscr{P}^B_{i,2k}$, the equality $G(p)_L = G(p')_L$ implies that $p$ and $p'$ have the same left branch, and $G(p)_R = G(p')_R$ implies that they have the same right branch. Therefore, the map $G$ is injective.
\end{proof}

The following theorem is the analogue of Theorem~\ref{thm: map between paths preserves the character}.

\begin{theorem}
    The map $G$ preserves the weight of paths in the sense that for any path $p \in \mathscr{P}^B_{i,2k}$, 
    \[\mathfrak{m}'(p) = \leftindex^L{\Pi} \circ \varpi(\mathfrak{m}'(G(p)_L) + \mathfrak{m}'(G(p)_R)).\]
\end{theorem}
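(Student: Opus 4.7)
My plan is to verify the identity by comparing the coefficient of each fundamental weight on both sides, using the compact formula $\mathfrak{m}'(q) = \sum_{j=1}^{2n-1} \tfrac{1}{2}(x_{j-1} - 2x_j + x_{j+1})\omega_j$ for any type $A_{2n-1}$ path $q = ((0,x_0),\dots,(2n,x_{2n}))$. Since $\varpi$ merges the coefficients of $\omega_j$ and $\omega_{2n-j}$ into a single coefficient of $\check{\omega}_{\bar j}$, the main target is to show that for each $1 \le j \le n-1$ the sum of the four discrete second differences at positions $j$ and $2n-j$ in $G(p)_L$ and $G(p)_R$ equals $2(a_j^L + a_j^R)$, where $a_j^L, a_j^R \in \{-1,0,+1\}$ denote the corner contributions of $p$'s left and right branches at index $j$; and that at position $n$ the sum of the two second differences equals $-(\delta_y + \delta_z)$, where $\delta_y, \delta_z \in \{\pm 1\}$ record whether $y_n$ (resp.\ $z_n$) lies above or below $y_{n-1}$ (resp.\ $z_{n-1}$). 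This claim yields simultaneously the required integrality (each coefficient with $j \neq n$ is even, so the image of $\varpi$ lies in $\leftindex^L {P}'$) and the correct value under $\leftindex^L \Pi$.

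The proof of the main claim splits by regime, determined by the transition index $j_0$. A preliminary lemma, from the maximality of $j_0$ together with the parity constraint $y_j - z_j \equiv 2 \pmod 4$, gives $y_{j_0-1} = z_{j_0-1} + 2$, $y_{j_0} = y_{j_0-1}+2$, and $z_{j_0} = z_{j_0-1}-2$. For $j_0 \le j \le n-1$, the value $y_j/2$ sits inside a neighborhood of left-branch values at both positions $j$ and $2n-j$ of $G(p)_L$, yielding two corner contributions both equal to $a_j^L$; symmetrically for $z_j/2$ in $G(p)_R$, giving $2(a_j^L + a_j^R)$. For $j < j_0 - 1$, the roles swap: $y_j/2$ appears in $G(p)_L$ at $j$ and in $G(p)_R$ at $2n-j$, and similarly for the right branch, yielding again $2(a_j^L + a_j^R)$. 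The critical case is $j = j_0 - 1$, where the corner condition at position $2n-j_0+1$ in $G(p)_L$ depends on $\alpha_z = z_{j_0-2} - z_{j_0-1}$ and the corner at the same position in $G(p)_R$ depends on $\alpha_y = y_{j_0-2} - y_{j_0-1}$; neither matches $p$'s corner naively. I would handle this with a four-case analysis over $(\alpha_y, \alpha_z) \in \{\pm 2\}^2$, checking that the total of the four contributions is always $2(a_{j_0-1}^L + a_{j_0-1}^R)$.

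For $j = n$, substituting $x_n = \lfloor (y_n+1)/2 \rfloor$ with $x_{n+1} = x_{n-1}$ (which holds uniformly in $j_0$ thanks to the preliminary lemma) yields $\tfrac{1}{2}(x_{n-1}-2x_n+x_{n+1}) = -\delta_y$ for $G(p)_L$ and $-\delta_z$ for $G(p)_R$. Matching with $\mathfrak{m}'(p)$ at index $n$ amounts to enumerating the four sign choices of $(\delta_y, \delta_z)$ and checking from the definition of $C_p^{\pm}$ that the two \emph{spike} points $(2n-1, y_n)$ and $(2n-1, z_n)$ contribute a total of $-(\delta_y + \delta_z)\omega_n$. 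I expect the main obstacle to be the critical case $j = j_0 - 1$: the naive pairing between $p$-corners and $G(p)_{L/R}$-corners breaks down, and the identity only survives because the key relations $y_{j_0} - y_{j_0-1} = 2$ and $z_{j_0} - z_{j_0-1} = -2$ force a balance among the four \emph{misplaced} corner conditions, which must be checked by direct computation in each of the four sub-cases.
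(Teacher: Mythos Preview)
Your approach is correct and would succeed, but it takes a substantially more laborious route than the paper's proof.  The paper observes that the weight of a \emph{pair} of type $A_{2n-1}$ paths depends only on the multiset of points $G(p)_L \sqcup G(p)_R$, since the coefficient of $\omega_j$ is $\tfrac12(a_{j-1}-2a_j+a_{j+1}+b_{j-1}-2b_j+b_{j+1})$, which is symmetric in the two paths.  This lets the paper re-decompose $G(p)_L \sqcup G(p)_R$ as $p_y \sqcup p_z$, where $p_y$ is the palindromic path $(0,\tfrac{y_0}{2}),\dots,(n,\lfloor\tfrac{y_n+1}{2}\rfloor),\dots,(2n,\tfrac{y_0}{2})$ built entirely from the left branch, and $p_z$ is the analogous path from the right branch.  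Because $p_y$ and $p_z$ are symmetric about position $n$, the contributions at $j$ and $2n-j$ are automatically equal, so $\varpi(\mathfrak{m}'(p_y)+\mathfrak{m}'(p_z))$ lands in $\leftindex^L{P}'$ and matches $\mathfrak{m}'(p)$ after applying $\leftindex^L\Pi$ with no case analysis whatsoever.  Your direct computation splits by the transition index $j_0$ and then by the signs $(\alpha_y,\alpha_z)$ at the boundary; the paper's trick absorbs all of this into a single re-decomposition of the same multiset.

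One simplification you may have missed: the critical case $j=j_0-1$ does \emph{not} actually require a four-way split over $(\alpha_y,\alpha_z)$.  If you sum the four second differences at positions $j_0-1$ and $2n-j_0+1$ in $G(p)_L$ and $G(p)_R$, the cross-terms cancel using only the relation $y_{j_0-1}=z_{j_0-1}+2$; the additional relations $y_{j_0}=y_{j_0-1}+2$ and $z_{j_0}=z_{j_0-1}-2$ (which in any case fail when $j_0=n$) are not needed.  This is, in effect, the computational shadow of the paper's multiset argument.
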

\begin{proof}
    We consider the set of points with multiplicity
    \[ G(p)_L \sqcup G(p)_R.\]
    This set consists of the disjoint union of two paths of type $A_{2n-1}$, and we call it a double path in the following. 
    
    We remark that there may be several different ways to decompose a double path as a union of two paths of type $A_{2n-1}$. However, given a double path of type $A_{2n-1}$, if we decompose it as $p_1 \sqcup p_2$, then we have 
    that the multiplicity of $\omega_j$ ($\forall j$) in $\mathfrak{m}'(p_1) + \mathfrak{m}'(p_2)$ equals to
    \[\frac{1}{2}(a_{j-1} - 2a_j + a_{j+1} + b_{j-1} - 2b_j + b_{j+1}),\]
    as we have seen in the proof of Proposition~\ref{prop: bijection preseves half-character}, where $(j-1,a_{j-1}),(j,a_j), (j+1,a_{j+1}),(j-1,b_{j-1}),(j,b_j),(j+1,b_{j+1}) \in p_1 \sqcup p_2$ with multiplicity.

    We note that this expression is independent of the decomposition $p_1 \sqcup p_2$. Therefore, if $p_1' \sqcup p_2' = p_1 \sqcup p_2$ as set of points with multiplicity, then $\mathfrak{m}'(p_1) + \mathfrak{m}'(p_2) = \mathfrak{m}'(p_1') + \mathfrak{m}'(p_2')$.

    In our case, we can also decompose 
    \[G(p)_L \sqcup G(p)_R = p_y \sqcup p_z,\]
    where 
    \[p_y = \big( (0,\frac{y_0}{2}),(1,\frac{y_1}{2}),\dots,(n-1,\frac{y_{n-1}}{2}),(n,\lfloor \frac{y_n+1}{2} \rfloor),(n+1,\frac{y_{n-1}}{2}),\dots,(2n-1,\frac{y_{1}}{2}),(2n,\frac{y_{0}}{2}) \big), \]
    \[p_z = \big( (0,\frac{z_0+2}{2}),\dots,(n-1,\frac{z_{n-1}+2}{2}),(n,\lfloor \frac{z_n+3}{2} \rfloor),(n+1,\frac{z_{n-1}+2}{2}),\dots,(2n,\frac{z_{0}+2}{2}) \big). \]

    Therefore,
    \[\leftindex^L{\Pi} \circ \varpi(\mathfrak{m}'(G(p)_L) + \mathfrak{m}'(G(p)_R)) = \leftindex^L{\Pi} \circ \varpi(\mathfrak{m}'(p_y) + \mathfrak{m}'(p_z)).\]

    Since the Langlands dual map $\leftindex^L{\Pi}$ acts by 
    \begin{align*}
        &\omega_{\bar{i}} \mapsto \omega_{i}/2, \quad i \neq n,\\
        &\omega_{\bar{n}} \mapsto \omega_{n},
    \end{align*}
    we have 
    \[\leftindex^L{\Pi} \circ \varpi(\mathfrak{m}'(p_y) + \mathfrak{m}'(p_z))  = \mathfrak{m}'(p).\]
\end{proof}

For a $T$-tuple of NOP $\overline{p} = (p_1,\dots,p_{T}) \in \overline{\mathscr{P}}^B_{(i_t,k_t)}$ associated to the snake 
\[(i_t,k_t)_{1 \leq t \leq T} = \big( (i,2k)(i,2k+4)\cdots(i,2k+4T-4) \big),\]
we define $G(\overline{p})$ to be the pair of $T$-tuples of paths \[\big(G(p_1)_L,G(p_2)_L,\dots,G(p_{T})_L \big)\] and \[\big( G(p_1)_R,G(p_2)_R,\dots,G(p_{T})_R \big).\]

\begin{theorem}
    $G$ maps a $T$-tuple of NOP to a pair of $T$-tuples of NOP    
    \[G: \overline{\mathscr{P}}^B_{(i_t,k_t)} \to \overline{\mathscr{P}}^A_{1} \times \overline{\mathscr{P}}^A_{2}.\]
\end{theorem}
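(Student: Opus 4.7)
The plan is to verify the claim fiberwise. Since each single-path map $G(p_t)_L$ and $G(p_t)_R$ has already been shown to land in $\mathscr{P}^A_{i_t, k_t/2}$ and $\mathscr{P}^A_{2n-i_t, k_t/2}$ respectively, what remains is to check that the resulting $T$-tuples are non-overlapping. I would reduce this to the pairwise assertion: whenever $p := p_{t-1} \succ p' := p_t$ in $\overline{\mathscr{P}}^B_{(i_t,k_t)}$, one has $G(p)_L \succ G(p')_L$ and $G(p)_R \succ G(p')_R$, i.e.\ strict inequality at every first coordinate. Setting $a := j_0(p)$ and $b := j_0(p')$, the comparison at each coordinate $j \in \{0,1,\dots,2n\}$ splits naturally according to which branch of the piecewise definition of $G(\cdot)_L$ and $G(\cdot)_R$ applies on each side.

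When both sides evaluate in their $y$-branch (i.e.\ $j \le 2n - \max(a,b)$ for the $L$-map) the required strict inequality is immediate from $y_m < y'_m$; symmetrically, when both sides evaluate in the $z$-branch it reduces to $z_m < z'_m$. At the central coordinate $j = n$ the formula involves $\lfloor (y_n+1)/2\rfloor$ (or $\lfloor (z_n+3)/2\rfloor$), and strict inequality of floors follows from the $\epsilon$-structure $y_n, z_n \in 2\mathbb Z + 1 \pm \epsilon$ combined with the squeeze $y_n < z'_n < y'_n$ coming from $p \succ p'$ at first coordinate $2n-1$, exactly as in the $F$-injectivity argument earlier in the paper.

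The transition zones are where the two sides lie in different branches. One of the two subcases is easy: for $G(\cdot)_L$ when $a > b$ and $j \in [2n-a+1, 2n-b]$, writing $m := 2n-j \in [b, a-1]$, we need $z_m + 2 < y'_m$. Since $m \ge b = j_0(p')$, the definition of $j_0$ gives $y'_m > z'_m + 2$, and then $z'_m > z_m$ from non-overlapping closes the chain. The mirror subcase for $G(\cdot)_R$ with $a < b$ is handled identically with the roles of $y, z$ swapped.

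The main obstacle is the remaining pair of subcases: $G(\cdot)_L$ with $a < b$ and $G(\cdot)_R$ with $a > b$. Both reduce to showing $y_m \le z'_m$ throughout a range of the form $[a, b-1]$ (resp.\ its $y,z$-mirror). The upper endpoint $m = b-1$ is clean: the defining property $y'_{b-1} \le z'_{b-1}+2$ combined with $y_{b-1} < y'_{b-1}$ and the congruence class of $y_m^{(t)}, z_m^{(t)} \pmod 4$ (which is independent of $t$ and determined by $i, k$) yields $z'_{b-1} \ge y_{b-1}$. Propagating this down to $m = a$ is delicate, because the quantity $z'_m - y_m$ can jump by $\pm 4$ between consecutive indices, so the boundary inequality does not automatically descend. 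I would attempt the propagation by coupling (i) the snake-position identities $y_0^{(t)} - y_0^{(t-1)} = z_0^{(t)} - z_0^{(t-1)} = 4$, (ii) the constraint $m \ge a = j_0(p)$ which imposes $y_m - z_m > 2$ together with an analogous positivity of $y'_m - z'_m$ on the relevant range, and (iii) a careful joint induction on $m$ tracking the sign pattern of the $\pm 2$ increments of $y, y', z, z'$ at each step, possibly re-expressed in terms of the fully mirrored auxiliary paths $p_y, p_z$ and $p_{y'}, p_{z'}$ used earlier in the weight-preservation argument and the identity $p_y = p_z$ at the switching index $j = 2n - j_0 + 1$. This combinatorial propagation, which is precisely where the specific shape of the snake (constant $i$, arithmetic progression of $k$'s) becomes essential rather than incidental, is where I expect nearly all of the technical work to concentrate.
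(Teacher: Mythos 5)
Your reduction to the pairwise claim $G(p)_\bullet \succ G(p')_\bullet$ and your four-way case analysis are essentially the right frame, and they go further than the paper's own proof, which disposes of every coordinate other than $j=n$ with the word ``obvious'' and reuses the gap-zero argument at $j=n$. Your handling of the pure $y$-zone, the pure $z$-zone, the central coordinate (the floor/$\epsilon$ squeeze), and the mixed zone where maximality of $j_0$ applies (there $m\ge j_0(p')$ gives $y'_m>z'_m+2$, and $z'_m>z_m$ closes the chain) is correct. But the proposal stops exactly at the remaining mixed zone: for $G(\cdot)_L$ with $j_0(p)<j_0(p')$ you must prove $y_m<z'_m+2$ for all $j_0(p)\le m\le j_0(p')-1$ (and the mirror statement for the $R$-paths), and you only describe a hoped-for joint induction without carrying it out. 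As written this is a genuine gap: the decisive step of the theorem is not proved.

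Moreover, that step is not merely delicate bookkeeping: with $j_0$ as literally defined (the maximum of all $j\le n$ with $y_{j-1}\le z_{j-1}+2$, not the first index where the inequality breaks), the inequality you plan to propagate can fail. Take $n=8$, $i=5$, the snake $(5,0),(5,4)$ (both points lie in $\mathcal{X}^B$ since $2n+2i+2\equiv 0 \bmod 4$), and the pair $p\succ p'$ with branches, listed as $y_0,\dots,y_8$ and $z_0,\dots,z_8$, given by $y=(10,12,14,16,18,20,18,16,15-\epsilon)$, $z=(20,18,16,14,12,10,8,6,5-\epsilon)$, $y'=(14,16,18,20,22,24,22,24,23-\epsilon)$, $z'=(24,22,20,22,20,18,20,22,21-\epsilon)$; all step, endpoint and non-overlapping conditions hold. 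Here $j_0(p)=4$ (since $y_3=z_3+2$ while $y_m>z_m+2$ for $4\le m\le 7$) and $j_0(p')=8$ (since $y'_7=z'_7+2$), and at $m=5$ one has $y_5=20=z'_5+2$, so $G(p)_L$ and $G(p')_L$ both pass through the point $(11,10)$ and the image pair is not strictly non-overlapping. Thus the inequality is false in the generality in which your induction needs it, even for the constant-$i$, step-four snakes of this section, so steps (i)--(iii) of your plan cannot close the gap as stated; any repair must modify the construction itself (for instance the choice of the switching index $j_0$ -- noting that replacing it by the first crossing would instead undercut the mixed case you settled via maximality) or reinterpret the strictness in the target set, and neither is supplied by the proposal.
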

\begin{proof}
    We need to show that if $p_1 \succ p_2$, then $G(p_1)_L \succ G(p_2)_L$ and $G(p_2)_R \succ G(p_2)_R$. For points whose first coordinate is not $n$, the inequality is obvious. For points with first coordinate $n$, the same argument in the proof of Proposition~\ref{prop: level 0 bijection} also applies here word by word.
\end{proof}

\begin{corollary}
    Let $W_{T,q^k}^{(i)}$ be a generalized Kirillov-Reshetikhin module of $\Uqghatdual$. Then the usual character of the Kirillov-Reshetikhin module $L(Y_{i,q^k}\cdots Y_{i,q^{k+2T-2}})$ of $\Uqghat$ is dominated by that of $W_{T,q^k}^{(i)}$:
    \[\chi(L(Y_{i,q^k}\cdots Y_{i,q^{k+2T-2}})) \preceq \leftindex^L{\Pi} (\chi^{\sigma}(W_{T,q^k}^{(i)})). \]
\end{corollary}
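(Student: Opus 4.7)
The plan is to expand both sides of the claimed inequality as explicit sums over sets of non-overlapping paths, and then to exploit the injective map $G$ constructed in this section—together with its character-preserving property—to identify the left-hand side as a sub-sum of the right-hand side.

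For the right-hand side, I would first invoke the proof of Lemma~\ref{lemma: tensor irreducible}: under the folding map one has $\chi^{\sigma}(W^{(i)}_{T,q^k}) = \varpi(\chi(V))$, where $V = L_1 \otimes L_2$ is the irreducible tensor product of the two type $A_{2n-1}^{(1)}$ snake modules $L_1 = L(Y_{i,q^k}\cdots Y_{i,q^{k+2T-2}})$ and $L_2 = L(Y_{2n-i,q^k}\cdots Y_{2n-i,q^{k+2T-2}})$. Multiplying the two path descriptions coming from Corollary~\ref{character-path} and applying $\leftindex^L{\Pi}$ termwise then yields
\[\leftindex^L{\Pi}\bigl(\chi^{\sigma}(W^{(i)}_{T,q^k})\bigr) \;=\; \sum_{(\overline{p}_1,\overline{p}_2)\in \overline{\mathscr{P}}^A_1\times\overline{\mathscr{P}}^A_2} \leftindex^L{\Pi}\Bigl(\bigl[\varpi\bigl(\mathfrak{m}'(\overline{p}_1)+\mathfrak{m}'(\overline{p}_2)\bigr)\bigr]\Bigr),\]
where summands whose weight lies outside $\leftindex^L{P}'$ contribute zero in $\ZZ[P]$. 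For the left-hand side, the path description (Corollary~\ref{character-path}) for the snake module of type $B_n$ attached to the shortened snake $(i,2k),(i,2k+4),\ldots,(i,2k+4T-4)$ gives $\chi(L(Y_{i,q^k}\cdots Y_{i,q^{k+2T-2}})) = \sum_{\overline{p}\in\overline{\mathscr{P}}^B_{(i_t,k_t)}}[\mathfrak{m}'(\overline{p})]$.

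The key step is then to apply the theorem established immediately before the corollary: for each single path $p$, $\mathfrak{m}'(p) = \leftindex^L{\Pi}(\varpi(\mathfrak{m}'(G(p)_L)+\mathfrak{m}'(G(p)_R)))$. Summing over the components $p_1,\ldots,p_T$ of a tuple $\overline{p}\in\overline{\mathscr{P}}^B$, and noting that each summand $\varpi(\mathfrak{m}'(G(p_t)_L)+\mathfrak{m}'(G(p_t)_R))$ already lies in $\leftindex^L{P}'$ so that $\leftindex^L{\Pi}$ is additive across the sum, I obtain $\mathfrak{m}'(\overline{p}) = \leftindex^L{\Pi}(\varpi(\mathfrak{m}'(G(\overline{p})_L)+\mathfrak{m}'(G(\overline{p})_R)))$. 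Combined with injectivity of $G : \overline{\mathscr{P}}^B_{(i_t,k_t)}\hookrightarrow \overline{\mathscr{P}}^A_1\times\overline{\mathscr{P}}^A_2$, this realizes the left-hand side as the partial sum of the right-hand side indexed by the image of $G$. The remaining pairs contribute non-negative elements of $\ZZ[P]$, so $\preceq$ follows.

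The main obstacle is careful bookkeeping with the map $\leftindex^L{\Pi}$, which is not a homomorphism of abelian groups on the whole lattice $\leftindex^L{P}$: it kills weights outside the sublattice $\leftindex^L{P}'$, and in particular additivity across a sum of weights is only guaranteed when every summand already lies in $\leftindex^L{P}'$. The previous theorem provides exactly this, so the passage from the single-path identity to the tuple identity is legitimate. Apart from this technical point, the proof is a structural repackaging, mirroring the proof of Theorem~\ref{thm: langlands dual rep A to B} but with the roles of the two types interchanged.
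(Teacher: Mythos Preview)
Your proposal is correct and follows essentially the same approach as the paper, which simply states that the proof is identical to that of Theorem~\ref{thm: langlands dual rep A to B}: expand both sides via path descriptions, use the weight-preserving theorem for $G$ on each path, and invoke injectivity of $G$ on tuples. Your extra care with the non-additivity of $\leftindex^L{\Pi}$ is a valid point (handled implicitly in the paper via the double-path decomposition $G(p)_L \sqcup G(p)_R = p_y \sqcup p_z$, which shows each $\varpi(\mathfrak{m}'(G(p_t)_L)+\mathfrak{m}'(G(p_t)_R))$ lies in $\leftindex^L{P}'$), and the rest is a faithful unpacking of that one-line proof.
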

\begin{proof}
    The proof is identical to that of Theorem~\ref{thm: langlands dual rep A to B}.
\end{proof}

\begin{remark}
    Following the same proof, it can be shown that for any shortened snake $(i_t,k_t)_{1 \le t \le T}$ of type $B_{n}$, 
    \[\chi(L(m)) \preceq \leftindex^L{\Pi} (\chi^{\sigma}(L({}^L{m}) \otimes L({}^L{m}'))), \]
    where $m$ is the monomial associated to the shortened snake $(i_t,k_t)_{1 \le t \le T}$ as defined in \eqref{eqn: snake weight type B}, ${}^L{m}$ is the monomial in \eqref{eqn: snake weight Lm}, and ${}^L{m}'$ is the monomial obtained from ${}^L{m}$ by replacing the spectral parameters $q^k$ with $-q^k$, $\forall k \in \ZZ$.

    However, in contrast to Kirillov-Reshetikhin modules, the tensor product of snake modules $L({}^L{m}) \otimes L({}^L{m}')$ is not irreducible in general.
\end{remark}

Concerning the positivity conjecture in the opposite direction, it is conjectured in \cite{frenkel2011langlandsfinite} that for an irreducible finite-dimensional representation $V$ of $\Uqghatdual$ of type $A_{2n-1}^{(2)}$, the image $\leftindex^L{\Pi}(\chi^{\sigma}(V))$ can be decomposed into a sum of characters of finite-dimensional representations of $\Uqghat$. 

However, we remark that in contrast to the map $\Pi: P' \to {}^L{P}$, which is injective, in the opposite direction, the expression $\leftindex^L{\Pi}(\chi^{\sigma}(V))$ contains fewer terms than $\chi^{\sigma}(V)$, because the root lattice $\leftindex^L{R}$ is not a sublattice of $\leftindex^L {P}'$ when $n \ge 3$.


\begin{thebibliography}{10}
	
	\bibitem{bittmann2023on}
	L.~Bittmann.
	\newblock On a determinant formula for some real regular representations, 
	\newblock arXiv:2301.00784.
	
	\bibitem{brito2024alternating}
	M.~Brito and V.~Chari.
	\newblock Alternating snake modules and a determinantal formula,
	\newblock arXiv:2412.03750.
	
	\bibitem{chari1991quantum}
	V.~Chari and A.~Pressley.
	\newblock Quantum affine algebras.
	\newblock {\em Comm. Math. Phys.}, 142(2):261--283, 1991.
	
	\bibitem{chari1995minimalnonsimp}
	V.~Chari and A.~Pressley.
	\newblock Minimal affinizations of representations of quantum groups: the
	nonsimply-laced case.
	\newblock {\em Lett. Math. Phys.}, 35(2):99--114, 1995.
	
	\bibitem{chari1995quantum}
	V.~Chari and A.~Pressley.
	\newblock Quantum affine algebras and their representations.
	\newblock In {\em Representations of groups ({B}anff, {AB}, 1994)}, volume~16
	of {\em CMS Conf. Proc.}, pages 59--78. Amer. Math. Soc., Providence, RI,
	1995.
	
	\bibitem{chari1996minimalirr}
	V.~Chari and A.~Pressley.
	\newblock Minimal affinizations of representations of quantum groups: the
	irregular case.
	\newblock {\em Lett. Math. Phys.}, 36(3):247--266, 1996.
	
	\bibitem{chari1996minimalsimp}
	V.~Chari and A.~Pressley.
	\newblock Minimal affinizations of representations of quantum groups: the
	simply laced case.
	\newblock {\em J. Algebra}, 184(1):1--30, 1996.
	
	\bibitem{chari1996quantum}
	V.~Chari and A.~Pressley.
	\newblock Quantum affine algebras and affine {H}ecke algebras.
	\newblock {\em Pacific J. Math.}, 174(2):295--326, 1996.
	
	\bibitem{chari1998twisted}
	V.~Chari and A.~Pressley.
	\newblock Twisted quantum affine algebras.
	\newblock {\em Comm. Math. Phys.}, 196(2):461--476, 1998.
	
	\bibitem{chenevier2008characters}
	G.~Chenevier and D.~Renard.
	\newblock Characters of {S}peh representations and {L}ewis {C}aroll identity.
	\newblock {\em Represent. Theory}, 12:447--452, 2008.

    \bibitem{Dahiya2025}
	K. Dahiya, E. Mukhin.
	\newblock Intertwiners of representations of twisted quantum affine algebras, 
	\newblock arXiv:2503.12716.
	
	\bibitem{damiani2015from}
	I.~Damiani.
	\newblock From the {D}rinfeld realization to the {D}rinfeld-{J}imbo
	presentation of affine quantum algebras: injectivity.
	\newblock {\em Publ. Res. Inst. Math. Sci.}, 51(1):131--171, 2015.
    
	\bibitem{drinfeld1987new}
	V.~G. Drinfel\cprime d.
	\newblock A new realization of {Y}angians and of quantum affine algebras.
	\newblock {\em Dokl. Akad. Nauk SSSR}, 296(1):13--17, 1987.
	
	\bibitem{duan2019cluster}
	B.~Duan, J.-R. Li, and Y.-F. Luo.
	\newblock Cluster algebras and snake modules.
	\newblock {\em J. Algebra}, 519:325--377, 2019.
	
	\bibitem{frenkel2011langlandsfinite}
	E.~Frenkel and D.~Hernandez.
	\newblock Langlands duality for finite-dimensional representations of quantum
	affine algebras.
	\newblock {\em Lett. Math. Phys.}, 96(1-3):217--261, 2011.
	
	\bibitem{frenkel2011langlandsreps}
	E.~Frenkel and D.~Hernandez.
	\newblock Langlands duality for representations of quantum groups.
	\newblock {\em Math. Ann.}, 349(3):705--746, 2011.
	
	\bibitem{frenkel2001combinatorics}
	E.~Frenkel and E.~Mukhin.
	\newblock Combinatorics of {$q$}-characters of finite-dimensional
	representations of quantum affine algebras.
	\newblock {\em Comm. Math. Phys.}, 216(1):23--57, 2001.
	
	\bibitem{frenkel1999q}
	E.~Frenkel and N.~Reshetikhin.
	\newblock The {$q$}-characters of representations of quantum affine algebras
	and deformations of {$\scr W$}-algebras.
	\newblock In {\em Recent developments in quantum affine algebras and related
		topics ({R}aleigh, {NC}, 1998)}, volume 248 of {\em Contemp. Math.}, pages
	163--205. Amer. Math. Soc., Providence, RI, 1999.
	
	\bibitem{hernandez2006kirillov}
	D.~Hernandez.
	\newblock The {K}irillov-{R}eshetikhin conjecture and solutions of
	{$T$}-systems.
	\newblock {\em J. Reine Angew. Math.}, 596:63--87, 2006.
	
	\bibitem{hernandez2010kirillov}
	D.~Hernandez.
	\newblock Kirillov-{R}eshetikhin conjecture: the general case.
	\newblock {\em Int. Math. Res. Not. IMRN}, (1):149--193, 2010.
	
	\bibitem{kac1968simple}
	V.~G. Kac.
	\newblock Simple irreducible graded {L}ie algebras of finite growth.
	\newblock {\em Izv. Akad. Nauk SSSR Ser. Mat.}, 32:1323--1367, 1968.
	
	\bibitem{kac1990infinite}
	V.~G. Kac.
	\newblock {\em Infinite-dimensional {L}ie algebras}.
	\newblock Cambridge University Press, Cambridge, third edition, 1990.

    \bibitem{kashiwara2019categorical}
    M.~Kashiwara and S.-j. Oh.
    \newblock Categorical relations between {L}anglands dual quantum affine algebras: doubly laced types.
    \newblock {\em J. Algebraic Combin.}, 49(4):401--435, 2019.
	\bibitem{knight1995spectra}
	H.~Knight.
	\newblock Spectra of tensor products of finite-dimensional representations of
	{Y}angians.
	\newblock {\em J. Algebra}, 174(1):187--196, 1995.
	
	\bibitem{lapid2014on}
	E.~Lapid and A.~M\'inguez.
	\newblock On a determinantal formula of {T}adi\'c.
	\newblock {\em Amer. J. Math.}, 136(1):111--142, 2014.
	
	\bibitem{lapid2018geometric}
	E.~Lapid and A.~M\'inguez.
	\newblock Geometric conditions for {$\square$}-irreducibility of certain
	representations of the general linear group over a non-archimedean local
	field.
	\newblock {\em Adv. Math.}, 339:113--190, 2018.
	
	\bibitem{lusztig1993introduction}
	G.~Lusztig.
	\newblock {\em Introduction to quantum groups}, volume 110 of {\em Progress in
		Mathematics}.
	\newblock Birkh\"{a}user Boston, Inc., Boston, MA, 1993.
	
	\bibitem{mcgerty2010langlands}
	K.~McGerty.
	\newblock Langlands duality for representations and quantum groups at a root of
	unity.
	\newblock {\em Comm. Math. Phys.}, 296(1):89--109, 2010.
	
	\bibitem{moody1968new}
	R.~V. Moody.
	\newblock A new class of {L}ie algebras.
	\newblock {\em J. Algebra}, 10:211--230, 1968.
	
	\bibitem{mukhin2012path}
	E.~Mukhin and C.~A.~S. Young.
	\newblock Path description of type {B} {$q$}-characters.
	\newblock {\em Adv. Math.}, 231(2):1119--1150, 2012.
	
	\bibitem{tadic1995on}
	M.~Tadi\'c.
	\newblock On characters of irreducible unitary representations of general
	linear groups.
	\newblock {\em Abh. Math. Sem. Univ. Hamburg}, 65:341--363, 1995.
	
	\bibitem{wang2023qq}
	K.~Wang.
	\newblock {$Q\widetilde Q$}-systems for twisted quantum affine algebras.
	\newblock {\em Comm. Math. Phys.}, 400(2):1137--1179, 2023.
	
	\bibitem{zelevinsky1980induced}
	A.~V. Zelevinsky.
	\newblock Induced representations of reductive {${\germ p}$}-adic groups. {II}.
	{O}n irreducible representations of {${\rm GL}(n)$}.
	\newblock {\em Ann. Sci. \'Ecole Norm. Sup. (4)}, 13(2):165--210, 1980.
	
\end{thebibliography}
\end{document}